\documentclass[a4paper]{article}
\usepackage{amsmath,amsthm,amssymb,amscd,graphicx}
\usepackage[all]{xy}
\usepackage{amsfonts}
\usepackage{latexsym}
\usepackage{soul}
\usepackage{comment}
\usepackage{color}

\usepackage{amsmath, latexsym, amssymb}
\numberwithin{equation}{section}
\newtheorem{lemma}{Lemma}[section]
\newtheorem{proposition}[lemma]{Proposition}
\newtheorem{theorem}[lemma]{Theorem}

\newtheorem{definition}[lemma]{Definition}
\newtheorem{remark}[lemma]{Remark}

\DeclareGraphicsRule{.tif}{png}{.png}{`convert #1 `dirname #1`/`basename #1 .tif`.png}

\def\address#1#2{\begingroup
\noindent\parbox[t]{12cm}{%
\small{\scshape\ignorespaces#1}\par\vskip1ex
\noindent\small{\itshape E-mail address}%
\/: #2\par\vskip4ex}\hfill%
\endgroup}%
\makeatother

\title{Second order deformations of 
associative submanifolds 
in nearly parallel $G_2$-manifolds}
\author{Kotaro Kawai
\footnote{This work was supported by JSPS KAKENHI Grant Numbers JP14J07067, JP17K14181. 
\newline 2010 Mathematics Subject Classification. 53C30, 53C38.}
}
\date{}

\begin{document}

\maketitle

\begin{abstract}
Associative submanifolds $A$ in nearly parallel $G_2$-manifolds $Y$ 
are minimal 3-submanifolds in spin 7-manifolds with a real Killing spinor. 
The Riemannian cone over $Y$ has the holonomy group contained in ${\rm Spin(7)}$ 
and the Riemannian cone over $A$ is a Cayley submanifold.

Infinitesimal deformations of associative submanifolds were 
considered by the author \cite{Kdeform}. 
This paper is a continuation of the work. 
We give a necessary and sufficient condition 
for an infinitesimal associative deformation 
to be integrable (unobstructed) to second order explicitly.
As an application, we show that 
the infinitesimal deformations of a homogeneous associative submanifold 
in the 7-sphere 
given by Lotay \cite{Lotay3}, which he called $A_3$, 
are unobstructed to second order. 
\end{abstract}

\section{Introduction}

Associative submanifolds $A$ in nearly parallel $G_2$-manifolds 
are minimal 3-submanifolds in spin 7-manifolds $Y$ with a real Killing spinor. 
The Riemannian cone over $Y$ has the holonomy group contained in ${\rm Spin(7)}$ 
and the Riemannian cone over $A$ is a Cayley submanifold.
There are many examples of associative submanifolds.
For example, 
special Legendrian submanifolds 
and invariant submanifolds in the sense of \cite[Section 8.1]{Blair} 
in Sasaki-Einstein manifolds are associative. 
Lagrangian submanifolds in the sine cones of nearly K\"ahler
6-manifolds are also associative  (\cite[Propositions 3.8, 3.9 and 3.10]{Lotay3}).

We are interested in deformations of associative submanifolds in nearly parallel $G_2$-manifolds. 
Since associative deformations are equivalent to Cayley cone deformations,
it may help to develop the deformation theory of a Cayley submanifold with conical singularities. 
This study can also be regarded as an analogous study of associative submanifolds in torsion-free $G_2$-manifolds.
%

%
The standard 7-sphere $S^7$ has a natural nearly parallel $G_2$-structure. 
Lotay \cite{Lotay3} studied associative submanifolds in $S^7$ intensively. 
In particular, he classified 
homogeneous associative submanifolds 
(\cite[Theorem 1.1]{Lotay3}), 
in which he gave the first explicit homogeneous example which does not arise from other geometries.
He called it $A_3$. 
This is the only known example of this property  
up to the ${\rm Spin}(7)$-action. 
Hence 
$A_3$ is a very mysterious example. 
It would be very interesting to 
see whether it is possible to obtain other new associative submanifolds 
not arising from other geometries by deforming it.  

It is known that the expected dimension of the moduli space 
of associative submanifolds 
is 0. 
However, 
there are many examples 
which have nontrivial deformations 
as pointed out in \cite[Theorem 1.3]{Lotay3}. 
In \cite{Kdeform}, 
the author studied infinitesimal associative deformations 
of homogeneous associative submanifolds in $S^7$. 
Infinitesimal associative deformations of other homogeneous examples than $A_3$ 
are unobstructed (namely, they extend to actual deformations) 
or reduced to the Lagrangian deformation problems in a totally geodesic $S^6$
(\cite[Theorems 1.1 and 1.2]{Kdeform}).
However, 
we did not know whether 
infinitesimal associative deformations of $A_3$ are unobstructed or not 
(\cite[Theorem 1.1]{Kdeform}).
The associative submanifold $A_3$ does not arise from other known geometries 
so its deformations are more complicated.

In this paper, we study second order deformations 
of associative submanifolds. 
Second order deformations 
of other geometric objects are 
considered by many people. 
For example, see \cite{Foscolo, Koiso, Mukai}.
We give a necessary and sufficient condition 
for an infinitesimal associative deformation 
to be integrable (unobstructed) to second order explicitly 
(Lemma \ref{lem:unob second order} and Proposition \ref{prop:second derivative F}).
As an application, we obtain the following.

\begin{theorem} \label{thm:A3 unob to second order}
All of the infinitesimal deformations of the associative submanifold $A_3$ defined by 
(\ref{eq:def A3}) 
in $S^7$ 
are
unobstructed to second order. 
\end{theorem}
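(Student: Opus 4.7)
The plan is to apply Lemma \ref{lem:unob second order} together with the explicit formula for the second derivative furnished by Proposition \ref{prop:second derivative F} to every infinitesimal associative deformation of $A_3$. Concretely, if $F$ denotes the associative deformation map whose zero set is the local moduli space near $A_3$, the lemma reduces the second-order unobstructedness of an infinitesimal deformation $v\in\ker F'$ to the condition that $F''(v,v)$ lies in the image of the linearized operator $F'$ (equivalently, that its $L^2$-projection onto $\operatorname{coker} F'$ vanishes). Since $A_3$ is minimal and the underlying nearly parallel $G_2$-structure on $S^7$ is real-analytic and symmetric, both the kernel and cokernel of $F'$ admit explicit descriptions: the kernel was worked out in \cite{Kdeform}, and the cokernel is identified with the kernel of the formal adjoint of $F'$ by self-adjointness of the associative Dirac-type operator.

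First I would fix an orthonormal frame adapted to the homogeneous structure of $A_3$, following the setup of \cite{Kdeform}, and write down a basis $\{v_1,\dots,v_N\}$ of the kernel of $F'$ consisting of sections of the normal bundle that are equivariant under the stabilizer of $A_3$. By the polarization identity, checking $F''(v,v)\in\operatorname{Im} F'$ for all $v=\sum c_i v_i$ is equivalent to checking $F''(v_i,v_j)\in\operatorname{Im} F'$ for every pair $(i,j)$. At this stage I would invoke the explicit formula of Proposition \ref{prop:second derivative F}, which expresses $F''(v_i,v_j)$ as an algebraic combination of $v_i,v_j$, their covariant derivatives, and the ambient cross product and Killing spinor of $S^7$. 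The values on the distinguished frame reduce each $F''(v_i,v_j)$ to a computation in the Lie algebra/representation data of the symmetry group of $A_3$.

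The final step is to verify that each $F''(v_i,v_j)$ is orthogonal to $\ker (F')^*$. The efficient way to do this is representation-theoretic: by equivariance the bilinear map $F''$ restricted to $\ker F'\times \ker F'$ lands in a direct sum of isotypic components of the normal bundle sections, and one only has to inspect those components that overlap with $\ker(F')^*$. If the intersection is trivial at the level of representations, the proof is finished by a symmetry argument; otherwise, one computes the components explicitly and checks they vanish, which again reduces to linear algebra on a finite-dimensional equivariant subspace.

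The main obstacle will be the bookkeeping in the explicit computation of $F''(v_i,v_j)$: the formula from Proposition \ref{prop:second derivative F} involves cross products and normal covariant derivatives on a 3-manifold sitting in $S^7$, and the kernel basis from \cite{Kdeform} is sizeable, so one must either exploit symmetry aggressively to cut down the number of distinct pairs $(i,j)$ or organize the calculation so that most contributions manifestly lie in $\operatorname{Im}F'$ by construction (for instance, those arising from ambient $\mathrm{Spin}(7)$-symmetries of $A_3$ automatically produce genuine associative deformations and hence integrable infinitesimal ones). Once the orbit of genuine deformations is factored out, the remaining obstruction is supported on a small equivariant subspace where the vanishing can be checked directly, completing the proof of Theorem \ref{thm:A3 unob to second order}.
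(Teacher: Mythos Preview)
Your high-level strategy matches the paper's: reduce via Lemma~\ref{lem:unob second order} and self-adjointness of $D$ (Lemma~\ref{lem:self-ad}) to checking $\langle F''(V,V),W\rangle_{L^2}=0$ for all $V,W\in\ker D$, then use Proposition~\ref{prop:second derivative F} to make this explicit. One simplification you don't mention but the paper exploits immediately is that on $S^7$ the curvature term $\sum e_i\times(R(V,e_i)V)^\perp$ vanishes (constant sectional curvature), so only the second-fundamental-form term survives.

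Where your plan diverges from the paper is in the organization of the remaining computation. You propose polarization over a basis and then a representation-theoretic shortcut, with the hope that either the isotypic components of $F''(\ker D,\ker D)$ miss $\ker D$ entirely, or that the residual check is small once the ${\rm Spin}(7)$-orbit directions are factored out. Neither shortcut is available here. The trivial (${\rm Spin}(7)$-induced) deformations are only $17$-dimensional inside the $34$-dimensional $\ker D$ (Lemma~\ref{lem:stab A3} and the discussion after it), so ``factoring them out'' still leaves half the space. More importantly, the vanishing is \emph{not} forced by representation theory alone: in the paper's computation (Lemma~\ref{lem:I(V,W) A3}) the relevant isotypic components do overlap, and the obstruction vanishes only because specific Clebsch--Gordan coefficients cancel numerically (e.g.\ $-24\sqrt{5}+24\sqrt{5}=0$ and $\tfrac{2}{\sqrt{10}}(-24\sqrt{5})+\tfrac{4}{\sqrt{6}}\cdot 24\sqrt{3}-24\sqrt{2}=0$).

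The paper's route to this endpoint is also different in detail: rather than polarizing, it integrates by parts and uses the equations $DV=DW=0$ (Lemma~\ref{lem:DV=0}) to reduce $\langle F''(V,V),W\rangle_{L^2}$ to a single bilinear integral $I(V,W)$ (Lemma~\ref{lem:L2 A3}), and then evaluates $I(V,W)$ via Schur orthogonality and the explicit Clebsch--Gordan embedding $\alpha_{4,4,1}$ (Lemmas~\ref{lem:Schur orthog sym2}--\ref{lem:explicit alpha}). Your fallback (``compute the components explicitly'') is exactly what is required, but it is the entire content of the argument rather than a contingency; your outline should promote it from afterthought to main step and drop the two shortcuts.
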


As stated above, 
the expected dimension of the moduli space of associative submanifolds is 0. 
Thus we will expect that an associative submanifold 
does not admit associative deformations generically. 
Theorem \ref{thm:A3 unob to second order} is unexpected result 
because it implies that 
infinitesimal associative deformations of $A_3$ might extend to actual deformations. 
(For example, by the action of some group.) 
Unfortunately, we have no idea currently.

If all infinitesimal associative deformations of $A_3$ are unobstructed, 
we will be able to know the type of singularities of Cayley submanifolds in some cases. 
Namely, as in \cite[Theorem 1.1]{Lotay_stab}, 
we can expect that 
if a Cayley integral current has a multiplicity one tangent cone 
of the form $\mathbb{R}_{>0} \times A_3$
with isolated singularity at an interior point $p$, then it has a conical singularity at $p$.
Moreover, as in \cite[Theorem 1.3]{Lotay_stab}, 
it might be useful to construct Cayley submanifolds with conical singularities 
in compact manifolds with ${\rm Spin}(7)$ holonomy. 

\begin{remark}
In \cite{Ksquashed}, the author 
classified homogeneous associative submanifolds  
and studied their associative deformations
in the squashed 7-sphere, which is a 7-sphere with another nearly parallel $G_2$-structure. 
In this case, 
all of 
homogeneous associative submanifolds 
arise from pseudoholomorphic curves of the nearly K\"ahler $\mathbb{C}P^3$. 
Thus 
the deformation problems are easier 
and all infinitesimal associative 
deformations of  homogeneous associative submanifolds 
in the squashed $S^7$ 
are unobstructed (\cite[Theorem 1.6]{Ksquashed}). 
\end{remark}

This paper is organized as follows. 
In Section 2, we review the fundamental
facts of $G_2$ and  ${\rm Spin}(7)$ geometry.
In Section 3, we recall the infinitesimal deformations of associative submanifolds 
and 
consider their second order deformations. 
We give a necessary and sufficient condition 
for an infinitesimal associative deformation 
to be integrable (unobstructed) to second order 
(Lemma \ref{lem:unob second order})
and 
describe it explicitly (Proposition \ref{prop:second derivative F}). 
In Section 4, 
we prove Theorem \ref{thm:A3 unob to second order} 
by using Proposition \ref{prop:second derivative F} 
and the Clebsch-Gordan decomposition. 
We also describe the trivial deformations 
(deformations given by the ${\rm Spin}(7)$-action) of $A_3$ explicitly.

\noindent{{\bf Notation}}:
Let $(M, g)$ be a Riemannian manifold. 
We denote by $i(\cdot)$ the interior product. 
For a tangent vector $v \in TM$, define a cotangent vector $v^{\flat} \in T^* M$ 
by $v^{\flat} = g(v, \cdot)$. 
For a cotangent vector $\alpha \in T^* M$, define a tangent vector $\alpha^{\sharp} \in TM$ 
by $\alpha = g(\alpha^{\sharp}, \cdot)$.  
For a vector bundle $E$ over $M$, 
we denote by $C^{\infty}(M, E)$ the space of all smooth sections of $E \rightarrow M$.

\noindent{{\bf Acknowledgements}}: 
The author would like to thank H\^ong V\^an L\^e 
for suggesting the problems in this paper. 
He thanks the referee for the careful reading of an earlier version of this paper 
and useful comments on it.

\section{$G_{2}$ and ${\rm Spin}(7)$ geometry}
First, we review the fundamental
facts of $G_2$ and  ${\rm Spin}(7)$ geometry.

\begin{definition} \label{def on R7}
Define a $3$-form $\varphi_{0}$ on $\mathbb{R}^{7}$ by
\begin{eqnarray*}
\varphi_{0} = dx_{123} +dx_{1} (dx_{45} +dx_{67}) +dx_{2}(dx_{46} - dx_{57}) - dx_{3}(dx_{47} + dx_{56}), 
\end{eqnarray*}
where  $(x_{1}, \cdots, x_{7})$ is the standard coordinate system 
on $\mathbb{R}^{7}$ 
and wedge signs are omitted. 
The Hodge dual of $\varphi_{0}$ is given by 
\begin{eqnarray*}
*\varphi_{0} = dx_{4567} +dx_{23} (dx_{67} + dx_{45}) +dx_{13}(dx_{57} - dx_{46}) - dx_{12}(dx_{56} + dx_{47}). 
\end{eqnarray*}

Decompose $\mathbb{R}^{8} = \mathbb{R} \oplus \mathbb{R}^{7}$
and denote by $x_{0}$ the coordinate on $\mathbb{R}$. 
Define a self-dual $4$-form $\Phi_{0}$ on $\mathbb{R}^{8}$ by
\begin{align*}
\Phi_{0} = dx_{0} \wedge \varphi_{0} + * \varphi_{0}. 
\end{align*}
Identifying $\mathbb{R}^{8} \cong \mathbb{C}^{4}$ via 
\begin{align} \label{eq:identification R8 C4}
\mathbb{R}^{8} \ni (x_{0}, \cdots, x_{7}) 
\mapsto 
(x_{0} + i x_{1}, x_{2} + i x_{3}, x_{4} + i x_{5}, x_{6} + i x_{7}) 
=: (z_{1}, z_{2}, z_{3}, z_{4}) \in \mathbb{C}^{4},
\end{align} 
$\Phi_{0}$ is described as 
\begin{align*}
\Phi_{0} = \frac{1}{2} \omega_{0} \wedge \omega_{0} + {\rm Re} \Omega_{0}, 
\end{align*}
where $\omega_{0} = \frac{i}{2} \sum_{j = 1}^{4} dz_{j \overline{j}}$ and 
$\Omega_{0} = dz_{1234}$ are the standard 
K\"{a}hler form and the holomorphic volume form on $\mathbb{C}^{4}$, respectively. 
\end{definition}

The stabilizers of $\varphi_{0}$ and $\Phi_{0}$ are 
the Lie groups $G_{2}$ and ${\rm Spin}(7)$, respectively: 
\begin{eqnarray*}
G_{2} = \{ g \in GL(7, \mathbb{R})  ;  g^{*}\varphi_{0} = \varphi_{0} \}, \qquad 
{\rm Spin}(7) = \{ g \in GL(8, \mathbb{R})  ;  g^{*}\Phi_{0} = \Phi_{0} \}. 
\end{eqnarray*}

The Lie group $G_{2}$ fixes 
the standard metric $g_{0} = \sum^7_{i=1} (dx_{i})^{2}$ and the orientation on $\mathbb{R}^{7}$. 
They are uniquely determined by $\varphi_{0}$ via 
\begin{eqnarray}\label{g varphi}
6 g_{0}(v_{1}, v_{2}) {\rm vol}_{g_{0}} = i(v_{1})\varphi_{0} \wedge i(v_{2})\varphi_{0} \wedge \varphi_{0}, 
\end{eqnarray}
where ${\rm vol}_{g_{0}}$ is a volume form of $g_{0}$ 
and $v_{i} \in T(\mathbb{R}^{7})$.

Similarly, 
${\rm Spin}(7)$ fixes 
the standard metric $h_{0} = \sum^7_{i=0} (dx_{i})^{2}$ and the orientation on $\mathbb{R}^{8}$. 
We have the following identities: 
\begin{eqnarray} \label{g Phi}
\Phi_{0} ^{2} = 14 {\rm vol}_{h_{0}}, \qquad
(i(w_{2}) i(w_{1}) \Phi_{0})^{2} \wedge \Phi_{0} = 6 
\|  w_{1} \wedge w_{2} \|_{h_{0}}^{2} 
{\rm vol}_{h_{0}}, 
\end{eqnarray}
where ${\rm vol}_{h_{0}}$ is a volume form of $h_{0}$ 
and $w_{i} \in T(\mathbb{R}^{8})$.

\begin{definition}
Let $M^7$ be an oriented 7-manifold and 
$\varphi$ be a 3-form on $M^7$. 
A 3-form $\varphi$ is called a {\bf $G_{2}$-structure} on $M^7$ if 
for each $p \in M^7$, there exists an oriented isomorphism between $T_{p} M^7$ and $\mathbb{R}^{7}$ 
identifying $\varphi_{p}$ with $\varphi_{0}$. 
From (\ref{g varphi}), $\varphi$ induces the metric $g$ 
and the volume form on $M^7$.
Similarly, for an oriented 8-manifold with a 4-form $\Phi$, 
we can define a {\bf ${\rm Spin}(7)$-structure} by $\Phi_0$.
\end{definition}

\begin{lemma}
A  $G_{2}$-structure $\varphi$ is called {\bf torsion-free} if 
$ d\varphi = d*\varphi = 0$. 
A ${\rm Spin}(7)$-structure $\Phi$ is called {\bf torsion-free} if 
$d\Phi = 0$. 
It is well-known that 
a $G_{2}$- or ${\rm Spin}(7)$-structure is torsion-free if and 
only if the holonomy group is contained in $G_{2}$ or ${\rm Spin}(7)$. 
This is also equivalent 
to saying that 
$\varphi$ or $\Phi$ is parallel with respect to 
the Levi-Civita connection of the induced metric.
\end{lemma}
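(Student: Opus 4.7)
\emph{Plan.} The statement contains both a definition (``torsion-free'' means $d\varphi=d*\varphi=0$, and $d\Phi=0$ respectively) and a claim: this is equivalent to the holonomy being contained in $G_2$ (respectively $\mathrm{Spin}(7)$) and to $\varphi$ (respectively $\Phi$) being parallel for the induced Levi-Civita connection. I will treat the $G_2$ case; the $\mathrm{Spin}(7)$ case is analogous and in fact simpler. I would establish the three-way equivalence of (a) $d\varphi=d*\varphi=0$, (b) $\nabla^g\varphi = 0$, (c) $\mathrm{Hol}(g)\subseteq G_2$, in the order (b)$\Leftrightarrow$(c), then (b)$\Rightarrow$(a), then (a)$\Rightarrow$(b).

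The equivalence (b)$\Leftrightarrow$(c) is the holonomy principle. If $\nabla\varphi=0$, then parallel transport preserves $\varphi$, so the holonomy at a base point $p$ lies in the stabilizer of $\varphi_p$, which is a copy of $G_2$ by Definition~\ref{def on R7}. Conversely, if $\mathrm{Hol}\subseteq G_2$, then $\varphi_p$ extends by parallel transport to a globally well-defined parallel section, which must coincide with $\varphi$ since both are $G_2$-compatible $3$-forms agreeing at $p$. The implication (b)$\Rightarrow$(a) is immediate: the Levi-Civita connection is torsion-free, so $d$ on forms is the antisymmetrization of $\nabla$, and the Hodge star is parallel because $g$ is; hence $\nabla\varphi=0$ yields both $d\varphi = 0$ and $d*\varphi = 0$.

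The main content is (a)$\Rightarrow$(b), due to Fern\'andez--Gray. The approach is to view $\nabla\varphi$ as a section of $T^*M\otimes\Lambda^3T^*M$ and decompose it pointwise under the $G_2$-action. Because $\mathfrak{so}(7)=\mathfrak{g}_2\oplus\mathfrak{g}_2^{\perp}$ and the first summand kills $\varphi_0$, the intrinsic torsion lives in $T^*M\otimes\mathfrak{g}_2^{\perp}$, a $49$-dimensional $G_2$-module that splits into four irreducible pieces $\tau_0\in\Lambda^0$, $\tau_1\in\Lambda^1$, $\tau_2\in\Lambda^2_{14}$, $\tau_3\in\Lambda^3_{27}$. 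A direct computation (using only that $d$ is the alternation of $\nabla$ together with Schur's lemma applied to the $G_2$-decompositions of $\Lambda^4$ and $\Lambda^5$) yields formulas of the schematic shape $d\varphi = c_0\tau_0\,{*\varphi} + c_1\tau_1\wedge\varphi + c_3{*\tau_3}$ and $d*\varphi = c_1'\tau_1\wedge {*\varphi} + c_2\tau_2\wedge\varphi$ with nonzero constants. Hence vanishing of $d\varphi$ and $d*\varphi$ forces each $\tau_i$ to vanish, giving $\nabla\varphi=0$. The $\mathrm{Spin}(7)$ analogue (Fern\'andez) is easier since the intrinsic torsion is a single irreducible $\mathrm{Spin}(7)$-module detected by $d\Phi$ alone.

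The principal obstacle is the representation-theoretic bookkeeping in the last step: one must identify each $G_2$-equivariant component of the exterior derivative on $T^*M\otimes\Lambda^3T^*M\to\Lambda^4T^*M\oplus\Lambda^5T^*M$ and check that the combined equivariant map is injective on the intrinsic-torsion summand. This rests on the Clebsch--Gordan decomposition of $T^*M\otimes\Lambda^3T^*M$ under $G_2$ and on explicit contraction identities for $\varphi_0$ and $*\varphi_0$, rather than on any deep geometric input.
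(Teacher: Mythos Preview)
Your outline is a correct sketch of the classical Fern\'andez--Gray argument (and its Fern\'andez analogue for $\mathrm{Spin}(7)$): the holonomy principle gives (b)$\Leftrightarrow$(c), the implication (b)$\Rightarrow$(a) is trivial, and (a)$\Rightarrow$(b) follows from the $G_2$-irreducible decomposition of the intrinsic torsion together with the observation that the four torsion components $\tau_0,\tau_1,\tau_2,\tau_3$ are detected by the $G_2$-types appearing in $d\varphi$ and $d{*}\varphi$.

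However, the paper itself provides no proof of this lemma: it is stated as ``well-known'' and the paper moves immediately to the next definition. So there is nothing to compare against. Your write-up goes well beyond what the paper does; it is essentially a self-contained summary of the original Fern\'andez--Gray and Fern\'andez results (cited in the paper as \cite{FG}), which is exactly what one would supply if asked to justify the lemma rather than merely quote it.
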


\begin{definition}[{\cite[Proposition 2.3]{AleSem}}] \label{def:cha_NP}
Let $(M^7, \varphi, g)$ be a manifold with a $G_2$-structure. 
Let $\nabla$ be the Levi-Civita connection of $g$. 
A $G_2$-structure $\varphi$ is called a {\bf nearly parallel $G_{2}$-structure} if 
one of the following equivalent conditions is satisfied. 
\begin{enumerate}
\item $d \varphi = 4 * \varphi$, 
\item $\nabla \varphi = \frac{1}{4} d \varphi$,  
\item $\nabla \varphi = * \varphi$, 
\item $\nabla_{v} (* \varphi) = - v^\flat \wedge \varphi$ for any $v \in TM$, 
\item $i(v) \nabla_{v} \varphi = 0$ for any $v \in TM$, 
\item The Riemannian cone $C(M) = \mathbb{R}_{>0} \times M$ 
         admits a torsion-free {\rm Spin(7)}-structure                                           
        $\Phi = r^{3} dr \wedge \varphi + r^{4} * \varphi$ with the induced cone metric 
        $\overline{g} = dr^{2} + r^{2} g$.      
\end{enumerate}
We call a manifold with a nearly parallel $G_{2}$-structure 
a {\bf nearly parallel $G_{2}$-manifold} for short. 
\end{definition}

\begin{definition} \label{def:Crchi}
Let $(M^7, \varphi, g)$ be a manifold with a $G_2$-structure. 
Define the cross product $\cdot \times \cdot: TM \times TM \rightarrow TM$
and 
a tangent bundle valued 3-form 
$\chi \in \Omega^3 (M, TM)$ by 
\begin{align*}
g(x \times y, z) = \varphi (x,y,z), \qquad
g(\chi (x,y,z), w) = * \varphi (x,y,z,w)
\end{align*}
for $x,y,z,w \in TM$. 
They are related via 
\begin{align}\label{eq:chicr}
\chi (x,y,z) = - x \times (y \times z) - g(x,y) z + g(x,z)y. \\
\nonumber
\end{align}
\end{definition}

Next, we summarize the facts 
about submanifolds in $G_2$ and ${\rm Spin}(7)$ settings. 
Let $M^7$ be a manifold with a $G_{2}$-structure $\varphi$ and the induced metric $g$.

\begin{lemma}[\cite{Harvey Lawson}] \label{def_asso_coasso}
For 
every oriented $k$-dimensional subspace $V^{k} \subset T_{p}M^7$ 
where $p \in M^7$ and  $k = 3, 4,$ 
we have
$
\varphi|_{V^{3}} \leq {\rm vol}_{V^{3}}, \ 
*\varphi|_{V^{4}} \leq {\rm vol}_{V^{4}}.
$
An oriented 3-submanifold $L^{3} \subset M^7$ is called {\bf associative} 
if $\varphi|_{TL^{3}} = {\rm vol}_{L^{3}}$, 
which is equivalent to $\chi|_{TL^{3}} = 0$ and $\varphi|_{TL^{3}} > 0$. 
An oriented 4-submanifold $L^{4} \subset M^7$ is called {\bf coassociative} 
if $*\varphi|_{TL^{4}} = {\rm vol}_{L^{4}}$, 
which is equivalent to 
$\varphi|_{TL^{4}} = 0$ and $*\varphi|_{TL^{4}} > 0.$
\end{lemma}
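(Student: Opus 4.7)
The plan is to work pointwise: by $G_2$-invariance, I may identify $(T_p M^7, \varphi_p, g_p)$ with $(\mathbb{R}^7, \varphi_0, g_0)$ and handle the associative and coassociative cases in turn.

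For the associative case, I would start with an oriented 3-plane $V^3$ equipped with an oriented orthonormal basis $(e_1, e_2, e_3)$ and apply the identity \eqref{eq:chicr} to this triple. Since the basis is orthonormal, the correction terms in \eqref{eq:chicr} drop out, giving
$$\chi(e_1, e_2, e_3) = -\, e_1 \times (e_2 \times e_3).$$
Set $u := e_2 \times e_3$. The norm identity $|x \times y|^2 = |x|^2|y|^2 - g(x,y)^2$ (which can be obtained from \eqref{eq:chicr} with $z = x$ after taking the $g$-inner product with $y$, or verified directly from $\varphi_0$) together with antisymmetry of $\varphi$ shows that $u$ is a unit vector orthogonal to both $e_2$ and $e_3$. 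Decomposing $e_1 = \lambda u + e_1'$ with $\lambda = g(e_1, u) = \varphi(e_1, e_2, e_3)$ and $e_1' \perp u$, and using $u \times u = 0$ together with the same norm identity, I compute
$$|\chi(e_1, e_2, e_3)|^2 = |e_1' \times u|^2 = |e_1'|^2 = 1 - \varphi(e_1, e_2, e_3)^2.$$
This delivers both the comass bound $\varphi|_{V^3} \leq {\rm vol}_{V^3}$ and the equivalence $\varphi|_{V^3} = {\rm vol}_{V^3}$ iff $\chi|_{V^3} = 0$ and $\varphi|_{V^3} > 0$ in one stroke.

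For the coassociative case, I would reduce to the associative case via Hodge duality. Extending an oriented orthonormal basis $(e_1,\ldots,e_4)$ of $V^4$ to a positively oriented orthonormal basis $(e_1,\ldots,e_7)$ of $T_p M^7$, the vectors $(e_5, e_6, e_7)$ form an oriented orthonormal basis of $V^{4\perp}$, and a direct sign computation yields
$$*\varphi(e_1, e_2, e_3, e_4) = \varphi(e_5, e_6, e_7).$$
Combined with the associative step, this gives the comass bound $*\varphi|_{V^4} \leq {\rm vol}_{V^4}$, with equality iff $V^{4\perp}$ is associative, iff $\chi(e_5, e_6, e_7) = 0$. By $g(\chi(x,y,z), w) = *\varphi(x,y,z,w)$, the last condition is equivalent to $*\varphi(e_5, e_6, e_7, e_j) = 0$ for $j = 1, 2, 3, 4$. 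Each such vanishing is (up to sign) the Hodge dual of $\varphi(e_a, e_b, e_c) = 0$ with $\{a, b, c\} = \{1, 2, 3, 4\} \setminus \{j\}$, and as $j$ ranges these exhaust all components of $\varphi|_{V^4}$. Hence $V^4$ is coassociative iff $\varphi|_{V^4} = 0$ and $*\varphi|_{V^4} > 0$.

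The main obstacle is keeping the $G_2$-algebraic bookkeeping in the associative step consistent: unit length and orthogonality of $e_2 \times e_3$, the vanishing $u \times u = 0$, and the norm identity $|e_1' \times u|^2 = |e_1'|^2$ when $e_1' \perp u$. All of these follow from \eqref{eq:chicr} and antisymmetry of $\varphi$, but one must watch signs and orientation conventions. Once these are in place, the coassociative case reduces to a Hodge-dual rephrasing without further geometric input.
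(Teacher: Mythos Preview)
The paper does not supply its own proof of this lemma; it is stated with a citation to Harvey--Lawson and serves as a definition/background result. Your proposal is correct and is in fact essentially the classical Harvey--Lawson argument: the identity $|\chi(e_1,e_2,e_3)|^2 + \varphi(e_1,e_2,e_3)^2 = 1$ on oriented orthonormal triples is precisely their ``associator equality'', and your derivation of it from \eqref{eq:chicr} together with the cross-product norm identity is the standard route. The coassociative reduction via Hodge duality to the orthogonal $3$-plane, and the translation of $\chi|_{V^{4\perp}}=0$ into $\varphi|_{V^4}=0$ through $g(\chi(e_5,e_6,e_7),e_j)=*\varphi(e_5,e_6,e_7,e_j)=\pm\varphi(e_a,e_b,e_c)$, is likewise the expected argument. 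There is nothing to compare against in the paper beyond the citation, and your bookkeeping (unit length of $e_2\times e_3$, $u\times u=0$, and $|e_1'\times u|=|e_1'|$ for $e_1'\perp u$) is sound.
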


Associative submanifolds have the following good properties 
with respect to the cross product. 

\begin{lemma} \label{lem:cross asso}
Let $L^3 \subset M^7$ be an associative submanifold 
and  $\nu \rightarrow L$ be 
the normal bundle of $L^3$ in $M^7$.
Then we have 
$$
TL \times TL \subset TL, \qquad
TL \times \nu \subset \nu,  \qquad
\nu \times \nu \subset TL.
$$
Here, the left hand sides are the spaces given by 
the cross product of elements of $TL$ or $\nu$. 
\end{lemma}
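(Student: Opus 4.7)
The plan is to reduce the three inclusions to a pointwise linear-algebra check at an arbitrary point $p\in L$. Since $G_2$ acts transitively on the Grassmannian of oriented associative $3$-planes in $\mathbb{R}^7$ (a classical fact; the Grassmannian is $G_2/SO(4)$), I can choose an orthonormal basis $\{e_1,\dots,e_7\}$ of $T_pM^7$ adapted to the splitting $T_pM=T_pL\oplus\nu_p$, with $T_pL=\mathrm{span}(e_1,e_2,e_3)$ and $\nu_p=\mathrm{span}(e_4,\dots,e_7)$, and with $\varphi_p$ equal to the standard form $\varphi_0$ of Definition \ref{def on R7}.

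Using the defining identity $g(x\times y,z)=\varphi(x,y,z)$ together with the antisymmetry of $\varphi$, the three inclusions reduce to just two vanishing statements: first, $\varphi_0(u,v,n)=0$ for $u,v\in T_pL$ and $n\in\nu_p$, which simultaneously takes care of $T_pL\times T_pL\subset T_pL$ and $T_pL\times\nu_p\subset\nu_p$; and second, $\varphi_0(n_1,n_2,n_3)=0$ for $n_i\in\nu_p$, which gives $\nu_p\times\nu_p\subset T_pL$.

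Both vanishings are visible at a glance from
\[
\varphi_0 = dx_{123} + dx_1(dx_{45}+dx_{67}) + dx_2(dx_{46}-dx_{57}) - dx_3(dx_{47}+dx_{56}),
\]
since every monomial contains either zero or exactly two indices from $\{4,5,6,7\}$; in particular no monomial has exactly one---nor exactly three---indices in $\{4,5,6,7\}$, which is precisely what is required.

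The only conceptual step is the $G_2$-homogeneity used in the initial reduction; everything after is inspection of the coefficients of $\varphi_0$. As an alternative I considered a coordinate-free derivation starting from $\chi|_{TL}=0$ combined with identity~(\ref{eq:chicr}), which quickly yields $x\times(y\times z)=g(x,z)y-g(x,y)z\in T_pL$ for $x,y,z\in T_pL$; but extracting the three individual inclusions from this relation without choosing a basis seems more cumbersome than the pointwise reduction above, so I would present the argument in the adapted-frame form.
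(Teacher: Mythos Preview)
Your proof is correct. The paper states this lemma without proof (it is a standard fact in $G_2$ geometry), so there is nothing to compare against directly. Your reduction via the transitivity of $G_2$ on the associative Grassmannian to a check of the monomials of $\varphi_0$ is clean and entirely standard; the observation that every term of $\varphi_0$ contains either zero or exactly two indices from $\{4,5,6,7\}$ is indeed the whole content of the lemma at the linear-algebra level.

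One small remark: the inclusion $TL\times TL\subset TL$ can also be obtained without invoking $G_2$-transitivity, since for any $u,v\in T_pL$ and $n\in\nu_p$ one has $g(u\times v,n)=\varphi(u,v,n)=(i(n)\varphi)|_{T_pL}(u,v)$, and $(i(n)\varphi)|_{T_pL}=0$ follows immediately from $\varphi|_{T_pL}={\rm vol}_L$ (a top form on $T_pL$) together with $n\perp T_pL$. The other two inclusions then follow either from your adapted-frame argument or, as you noted, from the identity~(\ref{eq:chicr}) combined with $\chi|_{TL}=0$. Either route is fine for this well-known lemma.
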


\begin{definition}
Let $X$ be a manifold with a ${\rm Spin}(7)$-structure $\Phi$.  
Then for 
every oriented $4$-dimensional subspace $W \subset T_{x}X$ 
where $x \in X,$ 
we have
$
\Phi|_{W} \leq {\rm vol}_{W}. 
$
An oriented 4-submanifold $N \subset X$ is called {\bf Cayley} 
if $\Phi|_{TN} = {\rm vol}_{N}$.
\end{definition}

\begin{lemma}[\cite{Harvey Lawson}]
If a $G_2$-structure is torsion-free, 
$\varphi$ and $* \varphi$ define calibrations. 
Hence 
compact (co)associative submanifolds 
are volume minimizing in their homology classes, 
and hence minimal. 
We also know that 
any (not necessarily compact) (co)associative submanifolds 
are minimal. 
Similar statement holds 
for Cayley submanifolds in a manifold with a torsion-free ${\rm Spin}(7)$-structure.
\end{lemma}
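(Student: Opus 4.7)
The plan is to invoke the standard calibration formalism of Harvey and Lawson. Recall that a closed $k$-form $\alpha$ on a Riemannian manifold is a \emph{calibration} if $\alpha|_V \le {\rm vol}_V$ for every oriented tangent $k$-plane $V$, and a $k$-submanifold $L$ is $\alpha$-\emph{calibrated} when equality holds on $TL$ pointwise. I would verify that $\varphi$, $*\varphi$, and $\Phi$ satisfy these conditions under the torsion-free hypothesis, and then apply the classical Stokes/first-variation argument.

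First I would check the two calibration conditions for $\varphi$. Closedness is immediate from the torsion-free assumption $d\varphi = 0$. The pointwise inequality $\varphi|_V \le {\rm vol}_V$ on every oriented 3-plane $V \subset T_pM^7$ is exactly the comparison estimate in Lemma \ref{def_asso_coasso}, and by definition an associative submanifold is one on which equality is attained, i.e.\ a $\varphi$-calibrated submanifold. The same verification works for $*\varphi$ (closed by $d*\varphi = 0$, comass estimate on 4-planes from Lemma \ref{def_asso_coasso}), so coassociative submanifolds are $*\varphi$-calibrated. For the $\mathrm{Spin}(7)$ case, $d\Phi = 0$ is the torsion-free condition and the pointwise comass bound $\Phi|_W \le {\rm vol}_W$ together with the equality characterization of Cayley submanifolds is stated immediately before the lemma.

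Given these inputs, the volume-minimization step is the standard Stokes calculation: if $L$ is a compact $\varphi$-calibrated 3-submanifold and $L'$ is homologous to $L$ via $L - L' = \partial S$, then
\begin{equation*}
{\rm vol}(L) = \int_L \varphi = \int_{L'} \varphi + \int_S d\varphi = \int_{L'} \varphi \le {\rm vol}(L'),
\end{equation*}
where the last inequality uses the comass bound. Thus $L$ minimizes volume in its homology class. The cases of $*\varphi$ and $\Phi$ are identical. Minimality (vanishing mean curvature) then follows from the first variation formula, since a volume minimizer is in particular a critical point of the area functional under compactly supported normal variations.

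For the non-compact case, the same calibration/Stokes argument applies locally: given any compactly supported normal variation $L_t$ of $L$ inside a precompact open set $U$, one compares ${\rm vol}(L\cap U)$ with ${\rm vol}(L_t\cap U)$ via a cobordism lying in a tubular neighborhood and integrates the closed form $\varphi$ (respectively $*\varphi$ or $\Phi$) over it; the comass bound forces ${\rm vol}(L\cap U) \le {\rm vol}(L_t\cap U)$, and the first variation formula then gives vanishing mean curvature pointwise. The only nontrivial ingredient is the comass inequality already supplied by Lemma \ref{def_asso_coasso}, so I do not anticipate any genuine obstacle; the remaining content is a routine application of Stokes' theorem and the first variation formula.
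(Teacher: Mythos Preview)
Your proposal is correct and reconstructs exactly the standard Harvey--Lawson calibration argument that the paper is citing; note that the paper itself does not supply a proof for this lemma but simply records it as a known fact with reference to \cite{Harvey Lawson}. There is nothing to compare against beyond the citation, and your sketch matches that classical argument faithfully.
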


\begin{lemma}
Let $(M^7, \varphi, g)$ be a nearly parallel $G_{2}$-manifold. 
Then there are no coassociative submanifolds in $M$(\cite[Lemma 3.2]{Lotay3}). 
An oriented 3-dimensional submanifold $L \subset M$ is associative if and only if 
$C(L) = \mathbb{R}_{>0} \times L \subset \mathbb{R}_{>0} \times M = C(Y)$ is Cayley. 
In particular, $L$ is minimal. 
\end{lemma}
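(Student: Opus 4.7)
The plan is to handle the three claims in turn, using the cone construction from condition (6) of Definition \ref{def:cha_NP} as the central tool.

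For the non-existence of coassociative submanifolds, I would argue by contradiction. Suppose $L^4 \subset M$ is coassociative. Then by Lemma \ref{def_asso_coasso} we have $\varphi|_{TL^4} = 0$ and $*\varphi|_{TL^4} = \mathrm{vol}_{L^4}$. Since pullback commutes with the exterior derivative, $d(\varphi|_{L^4}) = 0$. But from the nearly parallel condition $d\varphi = 4*\varphi$, pulling back gives $d(\varphi|_{L^4}) = 4 (*\varphi)|_{L^4} = 4\,\mathrm{vol}_{L^4}$, which is nowhere vanishing. This is a contradiction.

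For the equivalence of associativity of $L$ and the Cayley condition for $C(L)$, I would compute $\Phi$ on the tangent space of $C(L)$ directly. Pick $p \in L$ and an orthonormal basis $e_1,e_2,e_3$ of $T_p L$ with respect to $g$. At $(r,p) \in C(L)$, an orthonormal basis of $T_{(r,p)}C(L)$ with respect to the cone metric $\bar g = dr^2 + r^2 g$ is $(\partial_r,\, r^{-1}e_1,\, r^{-1}e_2,\, r^{-1}e_3)$, so
\begin{equation*}
\mathrm{vol}_{C(L)}(\partial_r, e_1, e_2, e_3) = r^3.
\end{equation*}
Using $\Phi = r^3\, dr \wedge \varphi + r^4 *\varphi$ and noting that $*\varphi$ is the pullback of a form on $M$ (so it annihilates $\partial_r$), while $dr(\partial_r)=1$ and $dr(e_i)=0$, a direct evaluation yields
\begin{equation*}
\Phi(\partial_r, e_1, e_2, e_3) = r^3\, \varphi(e_1,e_2,e_3).
\end{equation*}
Hence $\Phi|_{TC(L)} = \mathrm{vol}_{C(L)}$ if and only if $\varphi(e_1,e_2,e_3) = 1$, which by Lemma \ref{def_asso_coasso} is exactly the condition that $L$ be associative.

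Finally, minimality follows from the preceding lemma: since $C(M)$ carries a torsion-free $\mathrm{Spin}(7)$-structure, a Cayley submanifold $C(L)$ is minimal. A standard computation relating the mean curvature of a Riemannian cone $C(L)$ to that of its link $L$ (the cone is minimal iff the link is minimal in $M$) then gives minimality of $L$. The only step with any content is the tangent-space computation in the cone, which is essentially a bookkeeping exercise with the factors of $r$; I do not anticipate a real obstacle.
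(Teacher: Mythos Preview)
Your argument is correct. The paper itself does not supply a proof of this lemma: the first claim is simply attributed to \cite[Lemma 3.2]{Lotay3}, and the remaining claims are stated as facts without argument. Your direct contradiction via $d\varphi = 4*\varphi$ for the coassociative case, the pointwise evaluation of $\Phi = r^3\,dr\wedge\varphi + r^4 *\varphi$ on $(\partial_r,e_1,e_2,e_3)$ for the associative/Cayley equivalence, and the cone-link mean curvature relation for minimality are all standard and sound; there is nothing to compare against in the paper beyond the bare statement.
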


\section{Deformations of associative submanifolds}

\subsection{Infinitesimal deformations of associative submanifolds} \label{sec: infinitesimal}

First, we describe the infinitesimal deformation space explicitly again.  
The arguments here are based on 
\cite[Section 2]{Gayet}, 
\cite[Section 6.1]{Huybrechts}, 
\cite[Section 3.1]{MS}. 

Let $(M^7, \varphi, g)$ be a manifold with a $G_2$-structure and 
let $L^3 \subset M^7$ be a compact associative submanifold. 
Let $\nu \rightarrow L$ be 
the normal bundle of $L^3$ in $M^7$. 
By the tubular neighborhood theorem there exists a neighborhood
of $L$ in $M$ which is identified with 
an open neighborhood $\mathcal{T} \subset \nu$ of the zero section by
the exponential map. 
Set
\begin{align*}
C^{\infty}(L, \mathcal{T}) = \{ v \in C^{\infty}(L, \nu); v_{x} \in \mathcal{T} \mbox{ for any } x \in L \}.
\end{align*}
The exponential map induces the embedding 
${\rm exp}_{V}: L \hookrightarrow M$
by ${\rm exp}_{V}(x) = {\rm exp}_{x} (V_{x})$ 
for $V \in C^{\infty}(L, \mathcal{T})$ and $x \in L$. 
Let 
$$P_V: TM|_L \rightarrow TM |_{\exp_V (L)} \qquad 
\mbox{ for } V \in C^{\infty}(L, \mathcal{T})$$
be the isomorphism given by the parallel transport along 
the geodesic 
$[0,1] \ni t \mapsto \exp_x (t V_x) \in M$,
where $x \in L$, 
with respect to the Levi-Civita connection of $g$.
Let 
$\perp: TM|_L = TL \oplus \nu \rightarrow \nu$ be the orthogonal projection 
and 
$\nu_V \subset TM |_{\exp_V (L)}$ be the normal bundle of 
$\exp_V (L)$. 
Consider the orthogonal projection 
$$
\perp |_{P_V^{-1}(\nu_V)} : P_V^{-1}(\nu_V) \rightarrow \nu.
$$
The condition for this map to be an isomorphism is open 
and it is an isomorphism for $V=0$. 
Thus shrinking $\mathcal{T}$ if necessary, 
we may assume that 
$$
\phi_V: C^{\infty}(L, \nu_V) \rightarrow C^{\infty}(L, \nu), 
\qquad 
\phi_V(W) = (P_V^{-1}(W))^{\perp}
$$
is an isomorphism for $V \in C^{\infty}(L, \mathcal{T})$. 
Then 
define the first order differential operator 
$F: C^{\infty}(L, \mathcal{T}) \rightarrow C^{\infty}(L, \nu)$ by
\begin{align} \label{def:F}
F(V) = 
\phi_V \left( ({\rm exp}_{V}^{*} \chi) (e_1, e_2, e_3) \right),
\end{align} 
where $\{ e_1, e_2, e_3 \}$ is a local oriented orthonormal frame of $TL$. 
Then 
${\rm exp}_{V}(L) \subset M$ is associative 
if and only if $F(V)=0$. 
Thus 
a neighborhood of $L$ in 
the moduli space of associative submanifolds 
is identified 
with that of $0$ in $F^{-1}(0)$ (in the $C^{1}$ sense). 

Set  
$$D = (dF)_0 : C^{\infty}(L, \nu) \rightarrow C^{\infty}(L, \nu),$$
which 
is the linearization of $F$ at $0$. 
The operator $D$ is computed as follows.

\begin{proposition}[{\cite[Section 5]{Mclean}, \cite[Theorem 2.1]{Gayet}}] \label{prop:explicit D}
Let $(M^7, \varphi, g)$ be a manifold with a $G_2$-structure 
and let $L^3 \subset M^7$ be a compact associative submanifold. 
The operator $D$ above is given by 
$$
D V = \sum_{i=1}^3 e_i \times \nabla^{\perp}_{e_i} V + 
\left( (\nabla_V * \varphi)(e_1, e_2, e_3, \cdot) \right)^{\sharp},
$$
where 
 $\{ e_{1}, e_{2}, e_{3} \}$ is a local oriented orthonormal frame of $TL$ 
satisfying $e_i = e_{i+1} \times e_{i+2}$ for $i \in \mathbb{Z}/3$, 
$\nabla^{\perp}$ is the connection on the normal bundle $\nu$ induced 
by the Levi-Civita connection $\nabla$ of $(M, g)$. 
\end{proposition}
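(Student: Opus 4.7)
The plan is to linearize $F(V) = \phi_V\bigl((\exp_V^*\chi)(e_1,e_2,e_3)\bigr)$ at $V=0$, exploiting the fact that associativity of $L$ forces the value at $V=0$ to vanish. Setting $\chi_V := (\exp_V^*\chi)(e_1,e_2,e_3)$ and recalling from Lemma \ref{def_asso_coasso} that $\chi$ vanishes on $TL\otimes TL\otimes TL$, we have $\chi_0 = 0$. Since $\phi_0 = \mathrm{id}$, the Leibniz rule applied to the curve $t\mapsto \phi_{tV}(\chi_{tV})$ makes the derivative of the $\phi$-factor (evaluated on the vanishing value $\chi_0$) drop out, leaving
\[
DV = \frac{D}{dt}\Big|_{t=0}\chi_{tV},
\]
where the derivative is interpreted via parallel transport along the geodesics $\gamma_x(t) = \exp_x(tV_x)$ built into $\phi_V$.

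Next, view $\Psi_t(x) := \exp_x(tV_x)$ as a variation with variational field $V$ and write $\chi_{tV}(x) = \chi_{\Psi_t(x)}(d\Psi_t\cdot e_1, d\Psi_t\cdot e_2, d\Psi_t\cdot e_3)$. Using the standard variation identity $\tfrac{D}{dt}\big|_{t=0}d\Psi_t(e_i) = \nabla_{e_i} V$, the Leibniz rule for the Levi-Civita connection gives
\[
\frac{D}{dt}\Big|_{t=0}\chi_{tV} \;=\; (\nabla_V\chi)(e_1,e_2,e_3) \;+\; \sum_{i=1}^3 \chi\bigl(e_1,\ldots,\nabla_{e_i}V,\ldots,e_3\bigr).
\]
I would then decompose $\nabla_{e_i}V = (\nabla_{e_i}V)^\top + \nabla^\perp_{e_i}V$. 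The tangential part contributes nothing since $\chi$ vanishes on triples tangent to $L$. For the normal part, I substitute into the algebraic identity \eqref{eq:chicr}; the terms $g(\nabla^\perp_{e_i}V, e_j)$ vanish by orthogonality, and using the frame relation $e_i = e_{i+1}\times e_{i+2}$ the cyclic sum collapses to $\sum_i e_i\times\nabla^\perp_{e_i}V$, which lies in $\nu$ by Lemma \ref{lem:cross asso}.

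For the remaining term, differentiating the defining identity $g(\chi(x,y,z),w) = *\varphi(x,y,z,w)$ in the direction $V$ yields
\[
(\nabla_V\chi)(e_1,e_2,e_3) \;=\; \bigl((\nabla_V*\varphi)(e_1,e_2,e_3,\cdot)\bigr)^\sharp,
\]
and this vector is automatically normal because contracting the fourth slot with any $e_j$, $j\in\{1,2,3\}$, gives zero by skew-symmetry of $*\varphi$. Adding the two contributions gives the asserted formula.

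The step I expect to be the main obstacle is the initial reduction $DV = \tfrac{D}{dt}|_{t=0}\chi_{tV}$: this requires carefully verifying that the bundle isomorphisms $\phi_V$ — which involve both parallel transport along the geodesics $\gamma_x(t)$ and the orthogonal projection onto $\nu$ — contribute no extra derivative terms beyond what is already captured by differentiating $\chi_{tV}$ along those same geodesics. Once this bookkeeping is confirmed, the remaining computation is a routine algebraic unraveling using associativity, \eqref{eq:chicr}, and Lemma \ref{lem:cross asso}.
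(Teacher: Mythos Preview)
Your proposal is correct and follows essentially the same route as the paper: both reduce $DV$ to the covariant $t$-derivative of $(\exp_{tV}^*\chi)(e_1,e_2,e_3)$ at $t=0$ (using $\chi_0=0$ to kill the contribution from the bundle isomorphism), then apply the Leibniz rule together with $\left.\nabla_{d/dt}(d\exp_{tV})e_i\right|_{t=0}=\nabla_{e_i}V$, associativity of $L$, and the identity \eqref{eq:chicr}. The only cosmetic difference is that the paper expands $\chi=-\sum_j i(\eta_j)*\varphi\otimes\eta_j$ in an ambient orthonormal frame $\{\eta_j\}$ of $TM$ and carries an explicit $(\,\cdot\,)^\perp$ through the computation, whereas you differentiate $\chi$ directly as a $TM$-valued form and observe at the end that both resulting terms are already normal; neither choice affects the argument.
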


\begin{proof}
For simplicity, we write $\exp_{tV} = \iota_t$. 
Then 
\begin{align*}
D V 
=
\frac{d}{dt} \left( P_{t V}^{-1} (\iota_t^* \chi)(e_1,e_2,e_3)|_{t=0} \right)^{\perp}
=
\left(
\nabla_{\frac{d}{dt}} (\iota_t^* \chi)(e_1,e_2,e_3)|_{t=0}
\right)^{\perp},  
\end{align*}
where $\nabla_{\frac{d}{dt}}$ is the covariant derivative 
along the geodesic 
$[0,1] \ni t \mapsto \exp_x (t V_x) \in M$,
where $x \in L$, 
induced from the Levi-Civita connection of $g$. 
Let $\{ \eta_j \}_{j=1}^7$ be a local orthonormal frame of $TM$.
Then we have 
$$
\chi = - \sum_{j=1}^7 i(\eta_j) * \varphi \otimes \eta_j.
$$
We further compute 
\begin{align*}
D V 
&=
- \sum_j 
\left(
\left.
\nabla_{\frac{d}{dt}} 
\left(
(*\varphi \circ \iota_t) (\eta_j \circ \iota_t, 
(\iota_t)_* e_1, (\iota_t)_* e_2, (\iota_t)_* e_3 
) \eta_j \circ \iota_t 
\right)
\right|_{t=0}
\right)^{\perp}\\
&=
- \sum_j 
\left(
(\nabla_V * \varphi) (\eta_j, e_1, e_2, e_3) \eta_j
+ \sum_{i \in \mathbb{Z}/3} * \varphi (\eta_j, 
\left. \nabla_{\frac{d}{dt}} (\iota_t)_* e_i  \right|_{t=0}, e_{i+1}, e_{i+2}) \eta_j
\right )^{\perp},
\end{align*}
where we use 
$* \varphi (e_1, e_2, e_3, \cdot) = 0$ since $L$ is associative. 
Note that 
$\nabla_{\frac{d}{dt}} (\iota_t)_* e_i$ is the restriction of 
the covariant derivative 
$\nabla_{\frac{d}{dt}} (\bar \iota)_* e_i$ 
along the map $\bar\iota: L \times [0,1] \ni (x,t) \mapsto \iota_t (x) \in M$. 
Then the standard equations of the covariant derivative along the map imply that 
\begin{align*}
\left.
\nabla_{\frac{d}{dt}} (\iota_t)_* e_i \right|_{t=0} 
&=
\left. \nabla_{e_i} (\iota_t)_*  \left( \frac{d}{dt} \right) \right|_{t=0} 
=
\nabla_{e_i} V, \\
* \varphi (\nabla_{e_i} V, e_{i+1}, e_{i+2}, \eta_j)
&=
* \varphi (\nabla^{\perp}_{e_i} V, e_{i+1}, e_{i+2}, \eta_j)\\
&=
g (\chi (\nabla^{\perp}_{e_i} V, e_{i+1}, e_{i+2}), \eta_j)\\
&=
g (- \nabla^{\perp}_{e_i} V \times (e_{i+1} \times e_{i+2}), \eta_j)
=
g (e_i \times \nabla^{\perp}_{e_i} V, \eta_j), 
\end{align*}
where we use the fact that $L$ is associative, (\ref{eq:chicr}) and 
$e_i = e_{i+1} \times e_{i+2}$. 
Then we obtain the statement.
\end{proof}

We can also describe the last term of $D V$ as follows.

\begin{lemma}
By \cite[Section 4]{FG}, we have 
an endomorphism $T \in C^\infty (M, {\rm End}(TM))$ given by 
\begin{align}\label{eq:def of torsion}
\nabla_v \varphi 
=
i(T(v)) * \varphi
\end{align}
for any $v \in TM$. 
Then we have 
$$
\left( (\nabla_V * \varphi)(e_1, e_2, e_3, \cdot) \right)^{\sharp}
= (T(V))^\perp.
$$
\end{lemma}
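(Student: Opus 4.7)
The plan is to first convert the statement about $\nabla_V \ast\varphi$ back into a statement about $\nabla_V\varphi$ via Hodge duality, and then to evaluate the resulting $4$-form on the tangent frame of $L$ using that $L$ is associative.

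First I would use the fact that the Levi-Civita connection $\nabla$ commutes with the Hodge star operator (because it preserves the metric and orientation, hence the volume form), so
\begin{equation*}
\nabla_V \ast\varphi = \ast(\nabla_V \varphi) = \ast\bigl(i(T(V))\ast\varphi\bigr).
\end{equation*}
Next I would invoke the standard pointwise identity relating interior product and wedge through the Hodge star: for any $p$-form $\alpha$ on an oriented Riemannian $n$-manifold and any vector $u$,
\begin{equation*}
i(u)\alpha = (-1)^{n(p-1)} \ast\bigl(u^{\flat}\wedge \ast\alpha\bigr).
\end{equation*}
Applying this with $\alpha=\ast\varphi$, $p=4$, $n=7$ and using $\ast\ast = \mathrm{id}$ on $4$-forms in dimension $7$, I get $\ast\bigl(i(u)\ast\varphi\bigr) = -u^{\flat}\wedge\varphi$ for every $u \in TM$. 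Putting $u = T(V)$ yields
\begin{equation*}
\nabla_V \ast\varphi = -T(V)^{\flat}\wedge \varphi.
\end{equation*}

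At this point the proof becomes a direct evaluation on the associative frame. For any $w \in T_xM$ with $x\in L$, I would expand the $4$-form $T(V)^{\flat}\wedge\varphi$ on $(e_1,e_2,e_3,w)$ by the usual antisymmetrization and use two facts: since $L$ is associative, $\varphi(e_1,e_2,e_3)=1$; and since $e_i = e_{i+1}\times e_{i+2}$, we have $\varphi(e_{i+1},e_{i+2},w) = g(e_i,w)$ for each $i \in \mathbb Z/3$. Combining these identities collapses the sum to $g(T(V)^{\top},w) - g(T(V),w) = -g(T(V)^{\perp},w)$, where $T(V)^{\top}$ and $T(V)^{\perp}$ are the tangential and normal parts along $L$. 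Thus $(\nabla_V\ast\varphi)(e_1,e_2,e_3,w) = g(T(V)^{\perp},w)$, and taking the musical isomorphism gives exactly $T(V)^{\perp}$.

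The only real obstacle is bookkeeping: keeping the sign convention in the Hodge-star identity straight (one can sanity-check it against the nearly parallel case, where $T=\mathrm{id}$ and the resulting formula $\nabla_v\ast\varphi = -v^{\flat}\wedge\varphi$ reproduces Definition \ref{def:cha_NP}(4)), and tracking the alternating signs in the $4$-form expansion against the cyclic identities for the cross product. Both are routine once the decomposition $w = w^{\top}+w^{\perp}$ is made, and nothing else is required.
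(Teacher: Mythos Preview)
Your proof is correct and follows essentially the same approach as the paper: first establish $\nabla_V\ast\varphi = -T(V)^{\flat}\wedge\varphi$ via Hodge duality, then evaluate on the associative frame using $\varphi(e_1,e_2,e_3)=1$ and $\varphi(e_{i+1},e_{i+2},\cdot)^{\sharp}=e_i$. The paper's version simply asserts the first identity without spelling out the Hodge-star computation you give, but the argument is otherwise identical.
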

\begin{proof}
We easily see that 
$
\nabla_v * \varphi 
=
* (\nabla_v \varphi)
=
- (T(v))^\flat \wedge \varphi.
$
Then 
\begin{align*}
\left( (\nabla_V * \varphi)(e_1, e_2, e_3, \cdot) \right)^{\sharp}
&= - \left((T(v))^\flat \wedge \varphi) (e_1, e_2, e_3, \cdot) \right)^{\sharp}\\
&=
\varphi(e_1, e_2, e_3) T(v) 
- \sum_{i \in \mathbb{Z}/3} g(T(v), e_i) \varphi (e_{i+1}, e_{i+2}, \cdot)^\sharp\\
&=
(T(v))^\perp,
\end{align*}
where we use $\varphi(e_1, e_2, e_3) = 1$ and 
$\varphi (e_{i+1}, e_{i+2}, \cdot)^\sharp = e_{i+1} \times e_{i+2} = e_i$. 
\end{proof}

Using this lemma, we see the following. 
\begin{lemma} \label{lem:self-ad}
If $d * \varphi = 0$, $D$ is self-adjoint.  
\end{lemma}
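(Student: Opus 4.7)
The plan is to split $D = P + Q$ with first-order part $PV := \sum_{i=1}^3 e_i \times \nabla^\perp_{e_i} V$ and zeroth-order part $QV := (T(V))^\perp$ (using the preceding lemma), and verify that each is formally self-adjoint with respect to the $L^2$-inner product on $\Gamma(\nu)$. The key preliminary observation is that the hypothesis $d *\varphi = 0$ is equivalent to the symmetry of $T \in \mathrm{End}(TM)$. Indeed, the preceding lemma gives $\nabla_v *\varphi = -(T(v))^\flat \wedge \varphi$, so $d*\varphi = -\alpha_T \wedge \varphi$, where $\alpha_T := \sum_i e^i \wedge (T(e_i))^\flat$ encodes the antisymmetric part of the bilinear form $(X, Y) \mapsto g(TX, Y)$. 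Since $\wedge \varphi : \Lambda^2 T^*M \to \Lambda^5 T^*M$ is an isomorphism (both sides are 21-dimensional, and injectivity follows from the standard decomposition $\Lambda^2 = \Lambda^2_7 \oplus \Lambda^2_{14}$ on which $*(\wedge \varphi)$ acts with distinct nonzero eigenvalues), we conclude $\alpha_T = 0$, i.e., $T$ is symmetric. Self-adjointness of $Q$ is then pointwise immediate: $g(QV, W) = g(TV, W) = g(V, TW) = g(V, QW)$ for $V, W \in \Gamma(\nu)$.

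For $P$, I would integrate by parts via the 1-form $\omega \in \Omega^1(L)$ defined by $\omega(Y) := g(V, Y \times W)$, which is well-defined because $Y \times W \in \nu$ by Lemma \ref{lem:cross asso}. Computing $d^*\omega$ in a local orthonormal frame $\{e_i\}$ of $TL$ and using the Leibniz rule $\nabla_X(Y \times Z) = (\nabla_X Y) \times Z + Y \times (\nabla_X Z) + R(X,Y,Z)$ with $g(R(X,Y,Z), U) = (\nabla_X \varphi)(Y,Z,U)$, the unwanted cross-terms $II(e_i, e_i) \times W$ and $e_i \times (\nabla_{e_i} W)^T$ both lie in $TL$ by Lemma \ref{lem:cross asso} ($\nu \times \nu \subset TL$ and $TL \times TL \subset TL$), hence annihilate $V \in \nu$. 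Stokes' theorem then yields
$$
\int_L \bigl[g(PV, W) - g(V, PW)\bigr]\, d\mathrm{vol}_L = \sum_{i=1}^3 \int_L *\varphi(T(e_i), e_i, W, V)\, d\mathrm{vol}_L.
$$

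The final step is to show the integrand vanishes pointwise once $T$ is symmetric. Splitting $T(e_i) = (Te_i)^T + (Te_i)^\perp$ with respect to $TM|_L = TL \oplus \nu$, the normal contribution $*\varphi((Te_i)^\perp, e_i, W, V)$ has three normal and one tangential argument, which forces it to vanish because $\chi(\nu, \nu, \nu) \subset \nu$ (a direct consequence of \eqref{eq:chicr} and Lemma \ref{lem:cross asso}, giving $*\varphi(V_1, V_2, V_3, X) = g(\chi(V_1, V_2, V_3), X) = 0$ for $X \in TL$, $V_j \in \nu$). The tangential contribution unfolds to $\sum_{i,j=1}^3 g(Te_i, e_j) \, *\varphi(e_j, e_i, W, V)$, a pairing of the symmetric matrix $\{g(Te_i, e_j)\}_{i,j}$ against a tensor antisymmetric in $(i, j)$, hence zero. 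Combining, $P$ is self-adjoint and so is $D$. The main bookkeeping obstacle is managing the several tangent/normal splittings via Lemma \ref{lem:cross asso} — in the IBP calculation and in the pointwise vanishing of the correction — but every skew piece ultimately vanishes from the single hypothesis $d *\varphi = 0$.
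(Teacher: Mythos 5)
Your proof is correct and follows essentially the same route as the paper: the same integration by parts with the $1$-form $\alpha(Y)=\varphi(V,Y,W)=g(V,Y\times W)$ on $L$, the same use of Lemma \ref{lem:cross asso} to kill the tangential cross-terms and the purely normal part of $T(e_i)$, and the same final pairing of the symmetric matrix $(g(Te_i,e_j))$ against the antisymmetric tensor $*\varphi(e_j,e_i,W,V)$. The only real difference is that you prove the key equivalence $d*\varphi=0 \Leftrightarrow T$ symmetric directly, via $d*\varphi=-\alpha_T\wedge\varphi$ and injectivity of wedging with $\varphi$ on $2$-forms (eigenvalues $2,-1$ on $\Lambda^2_7\oplus\Lambda^2_{14}$), whereas the paper obtains this step by citing Karigiannis's torsion classification ($\nabla\varphi\in W_1\oplus W_{27}$ iff $d*\varphi=0$); your version is more self-contained but otherwise the argument is the same.
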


\begin{proof}
For any normal vector fields $V,W \in C^\infty (L, \nu)$, we compute
\begin{align*}
g(D V, W) 
&= g(\sum_{i=1}^3 e_i \times \nabla_{e_i} V + T(V), W)\\
&\stackrel{Lemma \ref{lem:cross asso}}=
\sum_{i=1}^3 g(e_i \times \nabla_{e_i} V, W) + g(T(V), W), \\
\sum_{i=1}^3 g(e_i \times \nabla_{e_i} V, W)
&=
- \sum_{i=1}^3 \varphi (\nabla_{e_i} V, e_i, W)\\
&= 
\sum_{i=1}^3
(- e_i (\varphi (V, e_i, W)) + (\nabla_{e_i} \varphi) (V, e_i, W) \\
&\quad + \varphi (V, \nabla_{e_i} e_i, W) + \varphi (V, e_i, \nabla_{e_i} W) ).
\end{align*}
Define a 1-form $\alpha$ on $L$ by 
$\alpha = \varphi (V, \cdot, W)$. Then 
\begin{align*}
=& d^* \alpha 
+ \sum_{i=1}^3 (\nabla_{e_i} \varphi) (V, e_i, W) + g(V, e_i \times \nabla_{e_i} W)\\
\stackrel{(\ref{eq:def of torsion})}
=&
d^* \alpha 
+ \sum_{i=1}^3 * \varphi (T(e_i), V, e_i, W) + g(V, e_i \times \nabla_{e_i} W).
\end{align*}
By (\ref{eq:chicr}), it follows that 
$$
* \varphi (T(e_i), V, e_i, W) = g (\chi (e_i, V, W), T(e_i))
=
- g(e_i \times (V \times W), T(e_i)).
$$
By Lemma \ref{lem:cross asso}, 
$e_i \times (V \times W)$ is a (local) tangent vector field to $L$. Then 
$$
\sum_{i=1}^3 * \varphi (T(e_i), V, e_i, W) 
=
\sum_{i, j=1}^3 g(T(e_i), e_j) * \varphi (e_j, V, e_i, W).
$$
Hence we obtain
$$
g(D V, W) = 
g(V, D W) + g(T(V), W) - g(V, T(W)) + \sum_{i, j=1}^3 g(T(e_i), e_j) * \varphi (e_j, V, e_i, W) + d^* \alpha.
$$
Then we see that $D$ is self-adjoint if $T$ is symmetric. 
In terms of \cite[Section 2.5]{Kari}, this is 
the case $\nabla \varphi \in W_1 \oplus W_{27}$, 
which is equivalent to $d * \varphi = 0$ by \cite[Table 2.1]{Kari}.
\end{proof}

\begin{remark}
If a $G_2$-structure is torsion-free,
we have 
$T=0$, and hence 
$\left( (\nabla_V * \varphi)(e_1, e_2, e_3, \cdot) \right)^{\sharp} = 0$.
If a $G_2$-structure is nearly parallel $G_2$, 
we have 
$T={\rm id}_{TM}$ and 
$\left( (\nabla_V * \varphi)(e_1, e_2, e_3, \cdot) \right)^{\sharp} = V$. 
We can also deduce this 
by Definition \ref{def:cha_NP} (\cite[Lemma 3.5]{Kdeform}). 
In these cases, $D$ is self-adjoint as stated in Lemma \ref{lem:self-ad}.
\end{remark}

We easily see that 
the operator $D$ is elliptic, and hence Fredholm. 
Since $L$ is $3$-dimensional, the index of $D$ is $0$. 
Thus 
if $D$ is surjective, 
the moduli space of associative submanifolds is $0$-dimensional. 
See
\cite[Proposition 2.2]{Gayet}.

To understand the moduli space of associative submanifolds more, 
we consider their second order deformations in the next subsection.

\subsection{Second order deformations of associative submanifolds}

Use the notation in Section \ref{sec: infinitesimal}. 
The principal task in deformation theory is to integrate given infinitesimal (first order)
deformations $V \in \ker D$. 
Namely, 
to find a one-parameter family $\{ V(t) \} \subset C^{\infty}(L, \nu)$ such that
$$F(V(t))=0 
\qquad 
\mbox{and}  
\qquad
\left. \frac{d}{dt} V(t) \right|_{t=0} = V.$$
In general, this is not possible. 
In this subsection, 
we 
define the second order deformations 
of associative submanifolds 
and 
give a necessary and sufficient condition 
for an infinitesimal associative deformation 
to be integrable (unobstructed) to second order.

\begin{definition}
Let $M^7$ be a manifold with a $G_2$-structure and 
$L^3 \subset M^7$ be a compact associative submanifold. 
An infinitesimal associative deformation $V_1 \in \ker D \subset C^{\infty} (L, \nu)$ 
is said to be 
{\bf unobstructed to second order} 
if there exists $V_2 \in C^{\infty} (L, \nu)$ such that 
$$
\left. \frac{d^2}{d t^2} F \left( t V_1 + \frac{1}{2} t^2 V_2 \right) \right|_{t=0} = 0. 
$$
In other words, 
$t V_1 + \frac{1}{2} t^2 V_2$ gives an associative submanifold 
up to terms of the order $o(t^2)$.
\end{definition}

We easily compute 
$$
\left. \frac{d^2}{d t^2} F \left(t V_1 + \frac{1}{2} t^2 V_2 \right) \right|_{t=0} 
= 
\left. \frac{d^2}{d t^2} F(t V_1) \right|_{t=0} + D (V_2).
$$
Since $D$ is elliptic and $L$ is compact,
we have an orthogonal decomposition 
$C^{\infty} (L, \nu) = {\rm Im} D \oplus {\rm Coker} D$ 
with respect to the $L^2$ inner product. 
Then we obtain the following.

\begin{lemma} \label{lem:unob second order}
Let $\pi: C^{\infty} (L, \nu) \rightarrow {\rm Coker} D$ be an 
orthogonal projection with respect to the $L^2$ inner product. 
Then
an infinitesimal deformation $V_1 \in \ker D$ 
is unobstructed to second order 
if and only if 
\begin{align} \label{eq:unob second order}
\left. \pi \left( \frac{d^2}{d t^2} F(t V_1) \right) \right|_{t=0} = 0.
\end{align}
In other words,
we have 
$
\left \langle \left. \frac{d^2}{d t^2} F(t V_1) \right|_{t=0}, W \right \rangle_{L^2} = 0
$
for any $W \in {\rm Coker} D.$
\end{lemma}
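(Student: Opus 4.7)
The plan is to take the already-displayed identity
\begin{equation*}
\left. \frac{d^2}{d t^2} F \left(t V_1 + \frac{1}{2} t^2 V_2 \right) \right|_{t=0}
=
\left. \frac{d^2}{d t^2} F(t V_1) \right|_{t=0} + D (V_2),
\end{equation*}
combine it with Fredholm theory for $D$, and read off the obstruction. No genuine obstacle is expected; the content is essentially a formal consequence of ellipticity.

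First I would verify the displayed identity by a direct Leibniz computation. Writing $W(t) = t V_1 + \tfrac{1}{2} t^2 V_2$, one has $W(0) = 0$, $W'(0) = V_1$, and $W''(0) = V_2$. Since $L$ is associative, $F(0) = 0$, so the chain rule for $(F \circ W)''(0)$ gives
\begin{equation*}
(F \circ W)''(0) \;=\; d^2 F_0 (V_1, V_1) + (dF)_0 (V_2) \;=\; \left. \frac{d^2}{d t^2} F(t V_1) \right|_{t=0} + D (V_2),
\end{equation*}
where the specialization $V_2 = 0$ identifies $d^2 F_0(V_1, V_1)$ with $\frac{d^2}{dt^2} F(tV_1)|_{t=0}$ and where $(dF)_0 = D$ by definition.

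Next, because $L$ is compact and $D$ is elliptic as a first-order operator (Proposition \ref{prop:explicit D} exhibits its form), standard Fredholm theory for elliptic operators on compact manifolds provides the $L^2$-orthogonal decomposition
\begin{equation*}
C^{\infty}(L, \nu) \;=\; {\rm Im}\, D \oplus {\rm Coker}\, D,
\end{equation*}
where ${\rm Coker}\, D$ is finite-dimensional and consists of smooth sections (identified with $\ker D^*$).

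Finally I would conclude: $V_1$ is unobstructed to second order precisely when there exists $V_2 \in C^{\infty}(L, \nu)$ with $D(V_2) = - \frac{d^2}{dt^2} F(tV_1)|_{t=0}$, equivalently when the right-hand side lies in ${\rm Im}\, D$, equivalently when $\pi \bigl( \frac{d^2}{dt^2} F(tV_1)|_{t=0} \bigr) = 0$. Since the $L^2$-orthogonal complement of ${\rm Im}\, D$ is ${\rm Coker}\, D$, this is the same as the pairing condition $\langle \frac{d^2}{dt^2} F(tV_1)|_{t=0}, W \rangle_{L^2} = 0$ for every $W \in {\rm Coker}\, D$. The only subtle point in the whole argument is making sure to use $F(0) = 0$ so that the second-order Taylor expansion of $F \circ W$ contributes only the two summands above.
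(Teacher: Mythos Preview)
Your proposal is correct and follows exactly the same approach as the paper: the paper does not even give a formal proof of this lemma, but simply records the identity $\frac{d^2}{dt^2} F(tV_1 + \tfrac{1}{2}t^2 V_2)|_{t=0} = \frac{d^2}{dt^2} F(tV_1)|_{t=0} + D(V_2)$ and the $L^2$-orthogonal decomposition $C^{\infty}(L,\nu) = {\rm Im}\, D \oplus {\rm Coker}\, D$ from ellipticity, and then states the lemma as an immediate consequence. One small remark: the condition $F(0)=0$ is not actually needed for the displayed identity, since the second derivative of $F \circ W$ at $t=0$ is $d^2F_0(W'(0),W'(0)) + dF_0(W''(0))$ by the chain rule regardless of the value of $F(0)$.
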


\begin{remark}
Since $D$ is elliptic and hence Fredholm, 
we can construct a Kuranishi model 
for  associative deformations of a compact associative submanifold $L$
(\cite[Section A.4]{MS}) 
as in the case of Lagrangian deformations in 
nearly K\"ahler manifolds (\cite[Theorem 4.10]{LS}). 
Namely, 
there is a real analytic map 
$\tau: \mathfrak{U} \rightarrow \mathfrak{V}$, 
where $\mathfrak{U} \subset \ker D$ 
and 
$\mathfrak{V} \subset {\rm Coker} D$
are open neighborhoods of $0$, 
satisfying $\tau (0) = 0$ and $(d \tau)_0 = 0$ 
such that 
the moduli space of associative deformations of $L$ 
is locally homeomorphic to 
the kernel of $\tau$. 
(Hence the moduli space is locally a finite dimensional analytic variety.)
Then we obtain (\ref{eq:unob second order}) 
by taking the second derivative of $\tau$ at $0$ 
as in \cite[Proposition 4.17]{LS}.
\end{remark}

From Lemma \ref{lem:unob second order}, 
we have to understand $\left. \frac{d^2}{d t^2} F(t V_1) \right|_{t=0}$ 
for the second order deformations. 
It is explicitly computed as follows.

\begin{proposition} \label{prop:second derivative F}
Use the notation in Proposition \ref{prop:explicit D}. 
For $V \in \ker D$, we have
\begin{align*}
\left. \frac{d^2}{dt^2} F(t V) \right |_{t=0}
=& 
((\nabla_V \nabla * \varphi) (V) ) (e_1, e_2, e_3, \cdot)^{\sharp}\\
&+ 
2 \sum_{i \in \mathbb{Z}/3} 
\left((\nabla_V * \varphi) (\nabla^{\perp}_{e_i} V, e_{i+1}, e_{i+2}, \cdot)^{\sharp} \right)^{\perp}\\
&+
\sum_{i=1}^3 e_i \times (R(V, e_i) V)^{\perp}+ 
2 \sum_{i,j=1}^3 g(V, \Pi (e_i, e_j)) e_i \times \nabla^{\perp}_{e_j} V, 
\end{align*}
where $R$ is the curvature tensor of $(M, g)$ and 
$\Pi$ is the second fundamental form of $L$ in $M$. 

If a $G_2$-structure $\varphi$ is torsion-free or nearly parallel $G_2$, 
we have 
\begin{align*}
\left. \frac{d^2}{dt^2} F(t V) \right |_{t=0}
= 
\sum_{i=1}^3 e_i \times (R(V, e_i) V)^{\perp} 
+
2 \sum_{i,j=1}^3 g(V, \Pi (e_i, e_j)) e_i \times \nabla^{\perp}_{e_j} V.
\end{align*}
\end{proposition}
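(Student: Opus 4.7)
The plan is to push the derivation of $DV$ in the proof of Proposition \ref{prop:explicit D} to second order. Write $\iota_t(x) = \exp_x(tV_x)$ and $A_i(t) = (\iota_t)_* e_i$. Since parallel transport commutes with the covariant derivative along the geodesic,
\begin{align*}
\left. \frac{d^2}{dt^2} F(tV) \right|_{t=0}
= \left( \left. \nabla_{d/dt}^{\,2} \bigl[ \chi(A_1(t), A_2(t), A_3(t)) \bigr] \right|_{t=0} \right)^{\perp}.
\end{align*}
Applying the Leibniz rule, together with $\nabla_{d/dt} A_i|_{t=0} = \nabla_{e_i} V$ and the geodesic variation formula $\nabla_{d/dt}^{\,2} A_i|_{t=0} = R(V, e_i)V$ (from $\nabla_{\partial_t}\partial_i = \nabla_{\partial_i}\partial_t$ and $\nabla_{\partial_t}\partial_t = 0$), and using the cyclic antisymmetry of $\chi$ to collect terms, yields four groups: (a) $\nabla_V \nabla_V \chi(e_1,e_2,e_3)$; (b) $2 \sum_{i \in \mathbb{Z}/3}(\nabla_V \chi)(\nabla_{e_i}V, e_{i+1}, e_{i+2})$; (c) $\sum_{i \in \mathbb{Z}/3}\chi(R(V,e_i)V, e_{i+1}, e_{i+2})$; (d) $2 \sum_{i \in \mathbb{Z}/3}\chi(\nabla_{e_{i+1}}V, \nabla_{e_{i+2}}V, e_i)$.

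Group (a) produces the first summand of the statement directly via $\chi(x,y,z) = (\ast\varphi(x,y,z,\cdot))^\sharp$. For (b), decompose $\nabla_{e_i}V = \nabla^\perp_{e_i}V - A_V(e_i)$ with $A_V(e_i) = \sum_j g(\Pi(e_i,e_j),V)\, e_j$. The antisymmetry of $\ast\varphi$ kills all $j \neq i$ contributions of the $A_V(e_i)$ piece; the surviving $j = i$ part is a scalar multiple of $(\nabla_V \ast \varphi)(e_1,e_2,e_3,\cdot)^\sharp$ whose coefficient is the trace $\sum_k g(\Pi(e_k,e_k),V)$, which vanishes because $L$ is minimal in the torsion-free and nearly parallel $G_2$ settings. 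For (c), the identity $\chi(x,y,z) = -x \times (y \times z) - g(x,y)z + g(x,z)y$ together with $e_{i+1} \times e_{i+2} = e_i$ yields $\chi(R(V,e_i)V, e_{i+1}, e_{i+2}) = e_i \times R(V,e_i)V$ modulo tangent terms; Lemma \ref{lem:cross asso} then reduces this to $e_i \times (R(V,e_i)V)^\perp$ after projection.

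The principal computational obstacle is (d). Expanding each $\nabla_{e_k}V = \nabla^\perp_{e_k}V - A_V(e_k)$ splits (d) into four subpieces. The purely normal subpiece $\chi(\nabla^\perp_{e_{i+1}}V, \nabla^\perp_{e_{i+2}}V, e_i)$ unpacks via the same identity to $-\nabla^\perp_{e_{i+1}}V \times (\nabla^\perp_{e_{i+2}}V \times e_i) \in \nu \times \nu \subset TL$ modulo tangent corrections, hence has zero normal projection by Lemma \ref{lem:cross asso}; the purely tangent subpiece $\chi(A_V(e_{i+1}), A_V(e_{i+2}), e_i)$ vanishes since $\chi|_{T_xL} = 0$ by associativity. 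Only the two mixed subpieces survive. Writing $A_V(e_k) = \sum_l g(\Pi(e_k,e_l),V)\, e_l$ and evaluating each $\chi(\nabla^\perp_{e_?}V, e_l, e_i) = -\nabla^\perp_{e_?}V \times (e_l \times e_i)$ via the cross-product table $e_a \times e_b \in \{0, \pm e_c\}$ produces a finite sum whose entries have the form $g(V, \Pi(e_a,e_b))\, e_c \times \nabla^\perp_{e_d}V$. A careful reindexing in $\mathbb{Z}/3$ collapses these, modulo another mean-curvature correction killed by minimality, into $2 \sum_{i,j=1}^3 g(V, \Pi(e_i,e_j))\, e_i \times \nabla^\perp_{e_j}V$, completing the first formula.

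Finally, the specialized formula for torsion-free and nearly parallel $G_2$-structures follows by showing that the first two summands both vanish. The torsion-free case is immediate from $\nabla \ast \varphi = 0$. In the nearly parallel case, substituting $\nabla_v \ast\varphi = -v^\flat \wedge \varphi$ and using $V \perp e_i$, $\varphi(e_1,e_2,e_3) = 1$, and $\ast\varphi(V,e_1,e_2,e_3) = 0$ (this last one because $\chi(V,e_1,e_2) \in \nu$ by Lemma \ref{lem:cross asso}) shows that the first summand reduces to $0$ and the second is $-g(V, \nabla^\perp_{e_i}V)\, e_i$, a tangent vector killed by $(\cdot)^\perp$.
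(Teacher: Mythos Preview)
Your approach is essentially the paper's: direct second-order Leibniz expansion of $\nabla_{d/dt}^{\,2}\bigl[\chi(A_1,A_2,A_3)\bigr]$, followed by the same identification of the four groups and the same handling of the curvature piece (c) and the purely normal/purely tangent pieces of (d) via Lemma~\ref{lem:cross asso}. Your final paragraph on the specialized (torsion-free and nearly parallel) cases is also correct.

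There is, however, a genuine gap in your derivation of the \emph{general} formula. You dispose of the mean-curvature corrections arising in (b) and in (d) by asserting that $L$ is minimal. But for a general $G_2$-structure an associative submanifold need not be minimal; minimality is only established in the torsion-free and nearly parallel cases. So as written, your argument proves the first formula only under an extra hypothesis that is not assumed.

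The fix is exactly what the paper does, and it is where the hypothesis $V\in\ker D$ is actually used. The tangential part of (b) contributes
\[
-2\Bigl(\sum_{i} g(V,\Pi(e_i,e_i))\Bigr)\,\bigl((\nabla_V*\varphi)(e_1,e_2,e_3,\cdot)\bigr)^{\sharp},
\]
while the reindexing in (d) leaves the extra piece
\[
-2\Bigl(\sum_{i} g(V,\Pi(e_i,e_i))\Bigr)\,\sum_{j} e_j\times\nabla^{\perp}_{e_j}V.
\]
Adding these two corrections and comparing with the formula for $D$ in Proposition~\ref{prop:explicit D} gives precisely $-2\bigl(\sum_i g(V,\Pi(e_i,e_i))\bigr)\,DV$, which vanishes because $V\in\ker D$. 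No minimality is needed. Once you make this substitution, your argument for the general formula goes through.
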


\begin{proof}
Use the notation in the proof of Proposition \ref{prop:explicit D}. 
Setting 
$\chi_j = - i(\eta_j) * \varphi$, we have 
$\chi = \sum_{j=1}^7 \chi_j \otimes \eta_j$. 
Then 
\begin{align}
\frac{d}{dt} F(t V)
=&
\left(
P_{t V}^{-1}
\nabla_{\frac{d}{dt}} \left( (\iota_t^* \chi)(e_1,e_2,e_3) \right)
\right)^\perp, \nonumber \\
\left. \frac{d^2}{dt^2} F(t V) \right |_{t=0}
=&
\left.
\left(
\nabla_{\frac{d}{dt}}
\nabla_{\frac{d}{dt}} \left( (\iota_t^* \chi)(e_1,e_2,e_3) \right)
\right)^{\perp} \right |_{t=0} \nonumber \\
=&
\left.
\sum_j 
\left(\nabla_{\frac{d}{dt}}
\nabla_{\frac{d}{dt}} 
(\iota_t^* \chi_j (e_1, e_2, e_3) \eta_j \circ \iota_t) \right)^{\perp} \right|_{t=0} \nonumber \\
=&
\sum_j 
\left(
\left. \frac{d^2}{dt^2} \iota_t^* \chi_j (e_1, e_2, e_3) \right|_{t=0} \eta_j
+
2 \left. \frac{d}{dt} \iota_t^* \chi_j (e_1, e_2, e_3) \right|_{t=0} \nabla_V \eta_j \right. 
\nonumber \\
&+
\left. \left. \chi_j (e_1, e_2, e_3)  \nabla_{\frac{d}{dt}} \nabla_{\frac{d}{dt}} \eta_j \circ \iota_t \right|_{t=0}
\right)^{\perp}. \label{eq:second derivative F 1}
\end{align}
Since 
$\left. \frac{d}{dt} \iota_t^* \chi_j (e_1, e_2, e_3) \right|_{t=0}
= g(D V, \eta_j)$
by the proof of Proposition \ref{prop:explicit D}
and $\chi_j (e_1, e_2, e_3) = 0$, 
we only have to compute 
$\left. \frac{d^2}{dt^2} \iota_t^* \chi_j (e_1, e_2, e_3) \right|_{t=0}$.
Then 
\begin{align*}
\left. \frac{d^2}{dt^2} \iota_t^* \chi_j (e_1, e_2, e_3) \right|_{t=0}
=&
- \left. \frac{d^2}{dt^2} \left( (* \varphi \circ \iota_t) (\eta_j \circ \iota_t, 
(\iota_t)_* e_1, (\iota_t)_* e_2, (\iota_t)_* e_3) \right) \right|_{t=0} \\
=&
- 
\frac{d}{dt}
\left( (\nabla_{\frac{d}{dt}} * \varphi \circ \iota_t) (\eta_j \circ \iota_t, 
(\iota_t)_* e_1, (\iota_t)_* e_2, (\iota_t)_* e_3) \right.\\
&+
(* \varphi \circ \iota_t) (\nabla_{\frac{d}{dt}} \eta_j \circ \iota_t, 
(\iota_t)_* e_1, (\iota_t)_* e_2, (\iota_t)_* e_3)\\
&\left. \left. +
\sum_{i \in \mathbb{Z}/3}
(* \varphi \circ \iota_t) (\eta_j \circ \iota_t, 
\nabla_{\frac{d}{dt}} (\iota_t)_* e_i, (\iota_t)_* e_{i+1}, (\iota_t)_* e_{i+2}) \right) \right|_{t=0}\\
=&
- (\nabla_{\frac{d}{dt}} \nabla_{\frac{d}{dt}} * \varphi \circ \iota_t) |_{t=0}
 (\eta_j, e_1, e_2, e_3) \\
&- 2 \sum_{i \in \mathbb{Z}/3}  (\nabla_V * \varphi) (\eta_j, 
\nabla_{\frac{d}{dt}} (\iota_{t})_* e_i |_{t=0}, e_{i+1}, e_{i+2})\\
&- 
* \varphi (\nabla_{\frac{d}{dt}} \nabla_{\frac{d}{dt}} \eta_j \circ \iota_t |_{t=0}, 
e_1,e_2,e_3)\\
&-2 (\nabla_V * \varphi) (\nabla_V \eta_j, e_1, e_2, e_3)\\
&-2
\sum_{i \in \mathbb{Z}/3}
* \varphi (\nabla_V \eta_j, 
\nabla_{\frac{d}{dt}} (\iota_{t})_* e_i |_{t=0}, e_{i+1}, e_{i+2})\\
&- 
\sum_{i \in \mathbb{Z}/3}
* \varphi (\eta_j, 
\nabla_{\frac{d}{dt}} \nabla_{\frac{d}{dt}} (\iota_{t})_* e_i |_{t=0}, e_{i+1}, e_{i+2})\\
&-2 
\sum_{i \in \mathbb{Z}/3}
* \varphi (\eta_j, 
\nabla_{\frac{d}{dt}} (\iota_{t})_* e_i |_{t=0}, \nabla_{\frac{d}{dt}} (\iota_{t})_* e_{i+1} |_{t=0} , e_{i+2}).
\end{align*}
By the same argument as in the proof of Proposition \ref{prop:explicit D}, we have 
\begin{align*}
* \varphi (e_1, e_2, e_3, \cdot) &= 0, \\
\nabla_{\frac{d}{dt}} (\iota_{t})_* e_i |_{t=0} &= \nabla_{e_i} V, \\
\nabla_{\frac{d}{dt}} \nabla_{\frac{d}{dt}} (\iota_{t})_* e_i |_{t=0}
&=
R(V, e_i) V + \left. \nabla_{e_i} \nabla_{\frac{d}{dt}} (\iota_{t})_{*} \left(\frac{d}{dt} \right) \right|_{t=0}
=
R(V, e_i) V,
\end{align*}
where we use $\nabla_{\frac{d}{dt}} (\iota_{t})_{*} \left(\frac{d}{dt} \right) = 0$
because $\iota_t = \exp (t V)$ is a geodesic.
By the definition of the induced connection, we have

\begin{align*}
\left. \nabla_{\frac{d}{dt}} \nabla_{\frac{d}{dt}} * \varphi \circ \iota_t \right|_{t=0}
&=
\left. \nabla_{\frac{d}{dt}} ((\nabla_{\frac{d \iota_t}{dt}} * \varphi) \circ \iota_t)  \right|_{t=0}\\
&=
\left. 
\nabla_{\frac{d}{dt}}\left( ((\nabla * \varphi) \circ \iota_t) \left(\frac{d \iota_t}{dt} \right) \right) 
\right|_{t=0}
=
(\nabla_V \nabla * \varphi)(V), 
\end{align*}
where we use $\nabla_{\frac{d}{dt}} \frac{d \iota_{t}}{dt} = 0$.
Moreover, 
by the proof of Proposition \ref{prop:explicit D}
we have 
\begin{align*}
(\nabla_V * \varphi) (\nabla_V \eta_j, e_1, e_2, e_3) 
+
\sum_{i \in \mathbb{Z}/3}
* \varphi ( \nabla_V \eta_j, \nabla_{\frac{d}{dt}} (\iota_t)_* e_i |_{t=0}, e_{i+1}, e_{i+2})
= 
- g(D V, \nabla_V \eta_j).
\end{align*}
Thus it follows that 
\begin{align}
\left. \frac{d^2}{dt^2} \iota_t^* \chi_j (e_1, e_2, e_3) \right|_{t=0}
= &
- ((\nabla_V \nabla * \varphi) (V)) (\eta_j, e_1, e_2, e_3) \nonumber \\
&-2 \sum_{i \in \mathbb{Z}/3} (\nabla_V * \varphi) (\eta_j, \nabla_{e_i} V, e_{i+1}, e_{i+2}) \nonumber \\
&- \sum_{i \in \mathbb{Z}/3} * \varphi (\eta_j, R(V,e_i) V, e_{i+1}, e_{i+2}) \nonumber \\
&-2 \sum_{i \in \mathbb{Z}/3} * \varphi (\eta_j, \nabla_{e_i} V, \nabla_{e_{i+1}} V, e_{i+2}) 
+ 2 g(D V, \nabla_V \eta_j). \label{eq:second derivative F 2}
\end{align}
Hence 
from (\ref{eq:chicr}), (\ref{eq:second derivative F 1}) and (\ref{eq:second derivative F 2}), 
we obtain
\begin{align}
\left. \frac{d^2}{dt^2} F(t V) \right |_{t=0}
=&
((\nabla_V \nabla * \varphi) (V) ) (e_1, e_2, e_3, \cdot) ^{\sharp} \nonumber\\
&+ 
2 \sum_{i \in \mathbb{Z}/3} 
\left((\nabla_V * \varphi) (\nabla_{e_i} V, e_{i+1}, e_{i+2}, \cdot)^{\sharp} \right)^{\perp} \nonumber\\
&+
\sum_{i=1}^3 e_i \times (R(V, e_i) V)^{\perp}  \nonumber \\
&+
2 \sum_{i \in \mathbb{Z}/3} \chi (\nabla_{e_i} V, \nabla_{e_{i+1}} V, e_{i+2})^{\perp} \nonumber \\
&
+ 2 \sum_j g(D V, \nabla_V \eta_j) \eta_j^{\perp}
+ 2 \sum_j g(D V, \eta_j) (\nabla_V \eta_j)^{\perp}.  \label{eq:second derivative F 3}
\end{align}

Next, we compute 
$\sum_{i \in \mathbb{Z}/3} \chi (\nabla_{e_i} V, \nabla_{e_{i+1}} V, e_{i+2})^{\perp}$.
Let $\top: TM|_L \rightarrow TL$ be the projection.
Since $L$ is associative, we have
\begin{align*}
&\chi (\nabla_{e_i} V, \nabla_{e_{i+1}} V, e_{i+2})^{\perp}\\
=&
\chi (\nabla^{\top}_{e_i} V, \nabla^{\perp}_{e_{i+1}} V, e_{i+2})^{\perp}
+
\chi (\nabla^{\perp}_{e_i} V, \nabla^{\top}_{e_{i+1}} V, e_{i+2})^{\perp}
+
\chi (\nabla^{\perp}_{e_i} V, \nabla^{\perp}_{e_{i+1}} V, e_{i+2})^{\perp}. 
\end{align*}

The first term is computed as 
\begin{align*}
\chi (\nabla^{\top}_{e_i} V, \nabla^{\perp}_{e_{i+1}} V, e_{i+2})
=&
- \sum_{j=0}^2 g(V, \Pi (e_i, e_{i+j})) \chi (e_{i+j}, \nabla^{\perp}_{e_{i+1}} V, e_{i+2}) \\
\stackrel{(\ref{eq:chicr})}
=&
- \sum_{j=0}^2 g(V, \Pi (e_i, e_{i+j})) \nabla^{\perp}_{e_{i+1}} V \times (e_{i+j} \times e_{i+2}) \\
=&
- g(V, \Pi (e_i, e_i)) e_{i+1} \times \nabla^{\perp}_{e_{i+1}} V
+ g(V, \Pi (e_i, e_{i+1})) e_i \times \nabla^{\perp}_{e_{i+1}} V.
\end{align*}
The second term is computed as 
\begin{align*}
\chi (\nabla^{\perp}_{e_i} V, \nabla^{\top}_{e_{i+1}} V, e_{i+2})
=&
- \sum_{j=0}^2 g(V, \Pi (e_{i+1}, e_{i+j})) \chi (\nabla^{\perp}_{e_i} V, e_{i+j}, e_{i+2}) \\
\stackrel{(\ref{eq:chicr})}
=&
\sum_{j=0}^2 g(V, \Pi (e_{i+1}, e_{i+j})) \nabla^{\perp}_{e_i} V \times (e_{i+j} \times e_{i+2}) \\
=&
g(V, \Pi (e_i, e_{i+1})) e_{i+1} \times \nabla^{\perp}_{e_i} V
- g(V, \Pi (e_{i+1}, e_{i+1})) e_i \times \nabla^{\perp}_{e_i} V.
\end{align*}
The third term is computed as 
$$
\chi (\nabla^{\perp}_{e_i} V, \nabla^{\perp}_{e_{i+1}} V, e_{i+2})
= \chi (e_{i+2}, \nabla^{\perp}_{e_i} V, \nabla^{\perp}_{e_{i+1}} V)
\stackrel{(\ref{eq:chicr})}
=
- e_{i+2} \times (\nabla^{\perp}_{e_i} V \times \nabla^{\perp}_{e_{i+1}} V), 
$$
which is a section of $TL$ by Lemma \ref{lem:cross asso}. Then 
\begin{align*}
\chi (\nabla^{\perp}_{e_i} V, \nabla^{\perp}_{e_{i+1}} V, e_{i+2})^{\perp}
= 0.
\end{align*}
Hence we obtain
\begin{align}
&\sum_{i \in \mathbb{Z}/3} \chi (\nabla_{e_i} V, \nabla_{e_{i+1}} V, e_{i+2})^{\perp}
\nonumber\\
=&
- 
\sum_{i \in \mathbb{Z}/3} 
\left \{ g(V, \Pi (e_{i+1}, e_{i+1})) + g(V, \Pi (e_{i+2}, e_{i+2})) \right \} e_i \times \nabla^{\perp}_{e_i} V 
\nonumber\\
&+
\sum_{i \in \mathbb{Z}/3} 
g(V, \Pi (e_i, e_{i+1})) \left( e_i \times \nabla^{\perp}_{e_{i+1}} V + e_{i+1} \times \nabla^{\perp}_{e_i} V \right) 
\nonumber \\
=& 
\sum_{i,j=1}^3
g(V, \Pi (e_i, e_j)) e_i \times \nabla^{\perp}_{e_j} V 
- 
\left(\sum_{i=1}^3 g(V, \Pi (e_i, e_i)) \right) 
\left(\sum_{j=1}^3 e_j \times \nabla_{e_j}^{\perp} V \right).
\label{eq:second derivative F 4}
\end{align}
Thus 
using the equation 
\begin{align*}
&\sum_{i \in \mathbb{Z}/3} (\nabla_V * \varphi) (\nabla_{e_i} V, e_{i+1}, e_{i+2}, \cdot)\\
=&
\sum_{i \in \mathbb{Z}/3} (\nabla_V * \varphi) (\nabla^{\perp}_{e_i} V, e_{i+1}, e_{i+2}, \cdot)
- 
\left(\sum_{i=1}^3 g(V, \Pi (e_i, e_i)) \right) 
(\nabla_V * \varphi) (e_1, e_2, e_3, \cdot), 
\end{align*}
we obtain 
from Proposition \ref{prop:explicit D}, 
(\ref{eq:second derivative F 3}) and (\ref{eq:second derivative F 4}) 
\begin{align*}
\left. \frac{d^2}{dt^2} F(t V) \right |_{t=0}
=&
((\nabla_V \nabla * \varphi) (V) ) (e_1, e_2, e_3, \cdot) ^{\sharp} \\
&+ 
2 \sum_{i \in \mathbb{Z}/3} 
\left((\nabla_V * \varphi) (\nabla^{\perp}_{e_i} V, e_{i+1}, e_{i+2}, \cdot)^{\sharp} \right)^{\perp} \\
&+
\sum_{i=1}^3 e_i \times (R(V, e_i) V)^{\perp} 
+ 2 \sum_{i,j=1}^3  
g(V, \Pi (e_i, e_j)) e_i \times \nabla^{\perp}_{e_j} V 
\\
&+ 2 \sum_j g(D V, \nabla_V \eta_j) \eta_j^{\perp}
+ 2 \sum_j g(D V, \eta_j) (\nabla_V \eta_j)^{\perp} \\
&-2 \left(\sum_{i=1}^3 g(V, \Pi (e_i, e_i)) \right) D V, 
\end{align*}
which implies the first equation of Proposition \ref{prop:second derivative F}.

If a $G_2$-structure $\varphi$ is torsion-free, 
the second equation of 
Proposition \ref{prop:second derivative F} is obvious. 
If $\varphi$ is nearly parallel $G_2$, we have by Definition \ref{def:cha_NP} 
\begin{align*}
(\nabla_V \nabla * \varphi) (V) 
&= 
\nabla_V \nabla_V * \varphi - \nabla_{\nabla_V V} * \varphi \\
&=
\nabla_V (-V^\flat \wedge \varphi) + (\nabla_V V)^\flat \wedge \varphi \\
&=
-V^\flat \wedge i(V) * \varphi,
\end{align*}
which implies that 
$$
((\nabla_V \nabla * \varphi) (V))(e_1,e_2,e_3, \cdot) = 0. 
$$
Similarly, we have by Definition \ref{def:cha_NP} 
\begin{align*}
(\nabla_V * \varphi) (\nabla^{\perp}_{e_i} V, e_{i+1}, e_{i+2}, \cdot)^{\sharp}
=
- (V^\flat \wedge \varphi) (\nabla^{\perp}_{e_i} V, e_{i+1}, e_{i+2}, \cdot) ^{\sharp} 
=
- g(V, \nabla^{\perp}_{e_i} V) e_i, 
\end{align*}
where we use $\varphi (e_{i+1}, e_{i+2}, \cdot) ^{\sharp} = e_{i+1} \times e_{i+2} = e_i.$
Hence  
$$
\sum_{i \in \mathbb{Z}/3} 
\left((\nabla_V * \varphi) (\nabla_{e_i} V, e_{i+1}, e_{i+2}, \cdot)^{\sharp} \right)^{\perp} = 0.
$$
Thus we obtain  the second equation of 
Proposition \ref{prop:second derivative F}. 
\end{proof}

\begin{remark}
Using the endomorphism $T$ given by (\ref{eq:def of torsion}), 
we have 
\begin{align*}
&((\nabla_V \nabla * \varphi) (V) ) (e_1, e_2, e_3, \cdot)^{\sharp} 
+ 
2 \sum_{i \in \mathbb{Z}/3} 
\left((\nabla_V * \varphi) (\nabla^{\perp}_{e_i} V, e_{i+1}, e_{i+2}, \cdot)^{\sharp} \right)^{\perp}\\
=&
\left( (\nabla_V T)(V) \right)^\perp
+ T(V)^\top \times T(V)^\perp -2 \nabla^\perp_{T(V)^\top} V
\end{align*}
by a direct computation. 
If $\varphi$ is torsion-free ($T = 0$) these terms obviously vanish. 
If $\varphi$ is nearly parallel $G_2$ ($T= {\rm id}_{TM}$), 
these terms vanish again 
because 
$\nabla {\rm id}_{TM}=0$ and $({\rm id}_{TM} (V))^\top = 0$ for a normal vector field $V$. 
\end{remark}

\section{Associative submanifolds in $S^7$} \label{sec:computation S7}

In this section, we give a proof of Theorem \ref{thm:A3 unob to second order}.
The standard 7-sphere $S^7$ has a natural nearly parallel $G_2$-structure 
(\cite[Section 2]{Lotay3}). 
Homogeneous associative submanifolds in $S^7$ 
are  classified by Lotay (\cite[Theorem 1.1]{Lotay3}). 
As noted in the introduction, 
there is a mysterious  homogeneous example 
called $A_3$ 
which does not arise from other geometries.

First, 
we summarize the facts for $A_3$ from \cite[Example 6.3, Section 6.3.3]{Kdeform}. 
Define $\rho_3: {\rm SU}(2) \hookrightarrow {\rm SU}(4)$ by 
\begin{align} \label{eq: irr SU(2) on C4}
\rho_3 
\left(
\left( 
\begin{array}{cc}
a & -\overline{b} \\
b &   \overline{a} \\
\end{array} 
\right)
\right)
= 
\left( 
\begin{array}{cccc}
a^{3}               & -\sqrt{3} a^{2} \overline{b} & \sqrt{3} a \overline{b}^{2}        & - \overline{b}^{3}\\
\sqrt{3} a^{2}b  & a (|a|^{2} - 2 |b|^{2})         & - \overline{b} (2|a|^{2} - |b|^{2}) & \sqrt{3} \overline{a} \overline{b}^{2}\\
\sqrt{3} a b^{2} & b (2|a|^{2} - |b|^{2})          & \overline{a} (|a|^{2} - 2 |b|^{2})  &-\sqrt{3} \overline{a}^{2} \overline{b}\\
b^{3}               & \sqrt{3} \overline{a} b^{2}  & \sqrt{3} \overline{a}^{2} b        & \overline{a}^{3}
\end{array} 
\right),  
\end{align}
where  
$a, b \in \mathbb{C}$ such that $|a|^{2} + |b|^{2} = 1$. 
This is an irreducible ${\rm SU}(2)$-action on $\mathbb{C}^4$. 
By using the notation of Appendix \ref{sec:R irr rep}, 
$\rho_3$ is the matrix representation 
of $\rho_3: {\rm SU}(2) \rightarrow {\rm GL}(V_3) \cong {\rm GL}(4, \mathbb{C})$ 
with respect to 
the basis 
$\{ v^{(3)}_0, \cdots, v^{(3)}_3 \}$.
Then 
\begin{align} \label{eq:def A3}
A_{3} = \rho_3 ({\rm SU}(2)) \cdot \frac{1}{\sqrt{2}} {}^t\! (0, 1, i, 0) \cong {\rm SU}(2)
\end{align}
is an associative submanifold in $S^7$.

Define the basis of the Lie algebra $\mathfrak{su}(2)$ of ${\rm SU}(2)$ by 
\begin{align} \label{E1E2E3}
E_{1} = 
\left( 
\begin{array}{cc}
0   & 1 \\
-1 & 0
\end{array} 
\right), \qquad
E_{2} = 
\left( 
\begin{array}{cc}
0 & i \\
i & 0
\end{array} 
\right), \qquad
E_{3} = 
\left( 
\begin{array}{cc}
i &   0 \\
0 & -i
\end{array} 
\right), 
\end{align}
which satisfies the relation $[E_{i}, E_{i + 1}] = 2 E_{i + 2}$ for  $i \in \mathbb{Z}/3$. 
Denote by $e_{1}, e_{2}, e_{3}$
the left invariant vector fields 
on ${\rm SU}(2) \cong A_{3}$ 
induced by 
$\frac{1}{\sqrt{7}} E_{1}, \frac{1}{\sqrt{7}} E_{2}, E_{3}$, respectively. 
Then they define a global orthonormal frame of $T A_3$. 
Explicitly, we have 
at $p_{0} = \frac{1}{\sqrt{2}} {}^t\! (0, 1, i, 0)$
\begin{align*}
e_1 = 
\frac{1}{\sqrt{14}} {}^t\! \left(\sqrt{3}, 2i, -2, -\sqrt{3}i \right), \ 
e_2 =
\frac{1}{\sqrt{14}} 
{}^t\! \left(\sqrt{3}i, -2, 2i, -\sqrt{3} \right), \ 
e_3 =
\frac{1}{\sqrt{2}} 
{}^t\! \left(0, i, 1, 0 \right), 
\end{align*}
and $(e_i)_{\rho_3 (g) \cdot p_0} = \rho_3 (g) \cdot (e_i)_{p_0}$ 
for $g \in {\rm SU}(2)$.
Set 
\begin{align} \label{eq:def of eta i}
\begin{aligned}
(\eta_{1})_{p_0} 
=& \frac{1}{\sqrt{2}} {}^t\! \left(i, 0, 0, 1 \right), \qquad
&
(\eta_{3})_{p_0} 
=& \frac{1}{\sqrt{42}} {}^t\! \left(-2 \sqrt{3}i, -3, 3i, 2 \sqrt{3} \right), \\
(\eta_{2})_{p_0} 
=& \frac{1}{\sqrt{2}} {}^t\! \left(-1, 0, 0, -i \right), \qquad
&
(\eta_{4})_{p_0} 
=& \frac{1}{\sqrt{42}} {}^t\! \left(-2 \sqrt{3}, 3 i, -3, 2 \sqrt{3} i \right), 
\end{aligned}
\end{align}
which is an orthonormal basis of the normal bundle at $p_0$. 
Setting  
 $(\eta_j)_{\rho_3 (g) \cdot p_0} = \rho_3 (g) \cdot (\eta_j)_{p_0}$ 
for $g \in {\rm SU}(2)$, 
we obtain an orthonormal frame $\{ \eta_j \}_{j=1}^4$ of the normal bundle $\nu$.

\subsection{Second order deformations of $A_3$}

Now, 
we consider the second order deformations of $A_3$. 
First, we describe 
the second derivative of the deformation map 
in a normal direction $V \in C^{\infty}(A_3, \nu)$
explicitly 
using Proposition \ref{prop:second derivative F}. 
Since $S^7$
with the round metric $\langle \cdot, \cdot \rangle$ 
has constant sectional curvature 1, 
we have 
$$
R(x,y)z = \langle y,z \rangle x - \langle x,z \rangle y \qquad \mbox{ for } x,y,z \in TS^7, 
$$
which implies that 
$$
(R(V, e_i) V)^{\perp} = 0.
$$
Then by Proposition \ref{prop:second derivative F}, it follows that 
\begin{align*}
\left. \frac{d^2}{dt^2} F(t V) \right |_{t=0}
= 
2 \sum_{i, j=1}^3 \langle V, \Pi (e_i, e_j) \rangle e_i \times \nabla^{\perp}_{e_j} V.
\end{align*}
We will compute this. 
By \cite[Lemma 6.20]{Kdeform} and its proof, 
we have the following. 
\begin{lemma} \label{lem:eqn for A3}
\begin{align*}
(\nabla^{\perp}_{e_{i}} \eta_{j})_{1 \leq i \leq 3, 1 \leq j \leq 4} 
&=
\frac{3}{7}
\left( 
\begin{array}{cccc}
- \eta_{4} & -\eta_{3}   & \eta_{2}    & \eta_{1} \\
 \eta_{3}  & - \eta_{4}  & - \eta_{1}  & \eta_{2} \\
7\eta_{2}  & -7 \eta_{1} & -5 \eta_{4} & 5 \eta_{3} 
\end{array} 
\right), \\
(e_{i} \times \eta_{j})_{1 \leq i \leq 3, 1 \leq j \leq 4} 
&= 
\left( 
\begin{array}{cccc}
\eta_{4} & \eta_{3}  & -\eta_{2} & -\eta_{1} \\
-\eta_{3} & \eta_{4}   & \eta_{1}  & -\eta_{2} \\
\eta_{2}  & -\eta_{1} & \eta_{4}   & -\eta_{3} 
\end{array} 
\right), \\
\left( \Pi (e_i, e_j) \right)_{1 \leq i,j \leq 3}
&=
\frac{2 \sqrt{3}}{7} 
\left(
\begin{array}{ccc}
\eta_1     & \eta_2    & -2 \eta_3 \\
\eta_2     & - \eta_1 &   2 \eta_4 \\
-2 \eta_3 & 2 \eta_4 & 0  \\
\end{array} 
\right).
\end{align*}
\end{lemma}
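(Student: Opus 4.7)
The plan is to exploit the $\mathrm{SU}(2)$-equivariance of the frames $\{e_i\}$ and $\{\eta_j\}$ to reduce all three sets of identities to a single point computation at the base point $p_0 = \frac{1}{\sqrt{2}}\,{}^t(0,1,i,0)$. Since, by construction, $(e_i)_{\rho_3(g)\cdot p_0} = \rho_3(g)\cdot(e_i)_{p_0}$ and similarly for $\eta_j$, and since the normal connection, the cross product, and the second fundamental form are all $\mathrm{SU}(2)$-invariant tensors, each identity holds globally on $A_3$ as soon as it is verified at $p_0$. Thus the proof reduces to a finite amount of explicit linear algebra in $\mathbb{C}^4 \cong \mathbb{R}^8$.

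For the two covariant-derivative-type quantities I would use the standard observation that for any vector field $Y$ on $A_3$ satisfying $Y_{\rho_3(g)\cdot p_0} = \rho_3(g)\cdot Y_{p_0}$ and any $X \in \mathfrak{su}(2)$, the Euclidean derivative in the ambient $\mathbb{R}^8$ is
$$\widetilde\nabla_{d\rho_3(X) p_0}\, Y\big|_{p_0} \;=\; \frac{d}{dt}\bigg|_{t=0}\rho_3(\exp tX)\cdot Y_{p_0} \;=\; d\rho_3(X)\cdot Y_{p_0}.$$
Taking $X \in \{E_1/\sqrt{7},E_2/\sqrt{7},E_3\}$ so that $d\rho_3(X)\, p_0 = (e_i)_{p_0}$, and inserting $Y=\eta_j$ or $Y=e_j$, the matrix $\rho_3$ in (\ref{eq: irr SU(2) on C4}) supplies the ambient derivative as an explicit column vector. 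The $S^7$-Levi-Civita derivative is obtained by subtracting the $p_0$-component; the normal connection $\nabla^\perp_{e_i}\eta_j$ is then read off by expanding the result in the orthonormal basis $\{(\eta_k)_{p_0}\}$, and the second fundamental form $\Pi(e_i,e_j)$ is read off as the $\nu$-component of $\widetilde\nabla_{e_i}e_j|_{p_0}$ after symmetrizing in $(i,j)$.

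For the cross products $e_i \times \eta_j$, the most efficient route is to use item~(6) of Definition~\ref{def:cha_NP}: the nearly parallel $G_2$-structure on $S^7$ is induced from the flat $\mathrm{Spin}(7)$-structure $\Phi_0$ on $C(S^7)=\mathbb{R}^8\setminus\{0\}$, so for $p \in S^7$ and $x,y,z \in T_p S^7$ one has $\varphi_p(x,y,z) = \Phi_0(p,x,y,z)$. Hence $\langle e_i\times\eta_j,\eta_k\rangle_{p_0} = \Phi_0\bigl(p_0,(e_i)_{p_0},(\eta_j)_{p_0},(\eta_k)_{p_0}\bigr)$ and, by Lemma~\ref{lem:cross asso}, no tangential components appear. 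Every entry is therefore a single evaluation of the explicit $4$-form $\Phi_0 = \tfrac12\omega_0\wedge\omega_0 + \mathrm{Re}\,\Omega_0$ from Definition~\ref{def on R7} on four concrete vectors in $\mathbb{C}^4$.

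The principal difficulty is not conceptual but bookkeeping: one must carefully track the identification $\mathbb{R}^8 \cong \mathbb{C}^4$ of (\ref{eq:identification R8 C4}), the correct normalizations $E_1/\sqrt{7}, E_2/\sqrt{7}, E_3$ that make $\{e_i\}$ orthonormal (these explain the asymmetric factors $3/7$ vs.\ the $\pm\eta_k$ entries in the matrix for $\nabla^\perp_{e_i}\eta_j$, and the $2\sqrt{3}/7$ normalizations in $\Pi$), and the orthogonal projections to the $4$-dimensional subspace $\nu_{p_0} = \mathrm{span}\{(\eta_k)_{p_0}\}$. No genuine obstacle arises; the three tables follow from mechanical $4\times 4$ complex arithmetic applied to the vectors listed in (\ref{eq:def of eta i}).
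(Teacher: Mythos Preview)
Your approach is correct and, in fact, provides considerably more than the paper does: the paper's ``proof'' of this lemma is simply the sentence ``By \cite[Lemma 6.20]{Kdeform} and its proof, we have the following,'' i.e., it defers entirely to the earlier paper. Your outline---reduce to the base point via $\mathrm{SU}(2)$-equivariance, compute ambient Euclidean derivatives as $d\rho_3(X)\cdot Y_{p_0}$, project off the radial direction to land in $T_{p_0}S^7$, and then read off the $\nu$-components against the explicit basis $\{(\eta_k)_{p_0}\}$---is exactly the standard method for computing connection coefficients and second fundamental forms of homogeneous submanifolds, and is presumably what underlies the cited Lemma~6.20 as well. The use of $\varphi_{p_0}(x,y,z)=\Phi_0(p_0,x,y,z)$ for the cross product is likewise the natural route. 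One minor remark: the symmetrization in $(i,j)$ for $\Pi$ is unnecessary, since $(\widetilde\nabla_{e_i}e_j)^\perp$ is automatically symmetric in $i,j$ (the skew part is $[e_i,e_j]$, which is tangent to $A_3$); but this is harmless.
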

Then $\left. \frac{d^2}{dt^2} F(t V) \right |_{t=0}$ 
is described explicitly as follows.
\begin{lemma} \label{lem:explicit second der A3}
Set
$$
V = \sum_{j=1}^4 V_j \eta_j \in \ker D, \qquad 
\left. \frac{d^2}{dt^2} F(t V) \right |_{t=0} 
= 
 \sum_{j=1}^4 F_j \eta_j, 
$$
where 
$V_j, F_j \in C^{\infty}(A_3)$ are smooth functions on $A_3$. 
Denoting 
$\mathcal{V}_1 = V_1 + i V_2$, 
$\mathcal{V}_2 = V_3 - i V_4$, 
we have 
\begin{align*}
F_1 + i F_2
=&
\frac{4 \sqrt{3}}{7} \left \{ -(i e_1 + e_2) (\mathcal{V}_1 \mathcal{V}_2) 
+ \bar{\mathcal{V}}_2 (- i e_1 + e_2) \mathcal{V}_1 
+ \left(i e_3 - \frac{24}{7} \right) (\mathcal{V}_2^2) \right \}, \\
F_3 - i F_4
=&
\frac{4 \sqrt{3}}{7} \left \{ \bar{\mathcal{V}}_1 (-i e_1 + e_2) \mathcal{V}_1
+ \frac{1}{2} (i e_1 + e_2) (\mathcal{V}_2^2) \right. \\
&
\left. \qquad 
+\bar{\mathcal{V}}_2 \left( \left( 2 i e_3 - \frac{48}{7} \right)\mathcal{V}_1 
-(-i e_1 + e_2) \mathcal{V}_2  \right)  \right \}.
\end{align*}
\end{lemma}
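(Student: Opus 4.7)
The plan is to start from the second equation of Proposition \ref{prop:second derivative F}, specialized to the nearly parallel $G_2$-manifold $S^7$, and then to carry out the algebraic bookkeeping using the data of Lemma \ref{lem:eqn for A3}. Since $S^7$ has constant sectional curvature $1$, we have $R(V, e_i)V = \langle e_i,V \rangle V - \langle V,V \rangle e_i$, which is tangent to $\mathbb{R}V + \mathbb{R}e_i$; for a normal field $V$ this gives $(R(V,e_i)V)^{\perp} = 0$. Hence the formula collapses to
\begin{align*}
\left.\frac{d^2}{dt^2} F(tV)\right|_{t=0} = 2\sum_{i,j=1}^{3} \langle V, \Pi(e_i,e_j)\rangle\, e_i \times \nabla^{\perp}_{e_j} V,
\end{align*}
so only three ingredients enter: $\Pi(e_i,e_j)$, $\nabla^{\perp}_{e_j}\eta_k$, and $e_i \times \eta_k$, all of which are tabulated in Lemma \ref{lem:eqn for A3}.

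First I would expand $V = \sum_{k=1}^{4} V_k \eta_k$ and compute
\[
\nabla^{\perp}_{e_j} V = \sum_{k=1}^{4}\bigl((e_j V_k)\eta_k + V_k\, \nabla^{\perp}_{e_j}\eta_k\bigr),
\]
substituting the coefficients in the first matrix of Lemma \ref{lem:eqn for A3}. Next I would apply $e_i \times (\,\cdot\,)$ using the second matrix, and finally multiply by $\langle V, \Pi(e_i,e_j)\rangle$ using the third matrix, whose off-diagonal symmetries $\Pi(e_1,e_2)\propto \eta_2\eta_1$ etc.\ are precisely what will produce the clean complex expressions. Summing over $i,j$ yields $F_1,F_2,F_3,F_4$ as explicit real expressions in $V_k$ and their $e_i$-derivatives.

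The last step is to reorganize the four real components $F_k$ into the two complex combinations $F_1+iF_2$ and $F_3-iF_4$. The motivation for the particular pairings $\mathcal{V}_1 = V_1 + iV_2$ and $\mathcal{V}_2 = V_3 - iV_4$ is that the $e_3$-block in Lemma \ref{lem:eqn for A3} acts as $\pm\sqrt{-1}$ multiplication on these pairs (up to the factor $3/7$ coming from $e_3$ having norm $1$ rather than $1/\sqrt{7}$), and the $e_1,e_2$-blocks act by the $\pm i e_1 + e_2$ type combinations appearing on the right-hand side. I would verify these pairings by checking the action of $\nabla^{\perp}_{e_i}$ and of $e_i\times$ on the pairs $\eta_1+i\eta_2$ and $\eta_3-i\eta_4$, which will transform the sum over $i,j$ into a short complex expression.

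The main obstacle will be the sheer amount of bookkeeping: one has to track sixteen terms in $\nabla^{\perp}_{e_j}V$, multiply by seven nonzero entries of $\Pi$, and cross with all four $\eta_k$, then regroup the resulting $O(10^2)$ monomials by matching real and imaginary parts in the target complex components. The constant $24/7$ in the $F_1+iF_2$ formula and $48/7$ in the $F_3-iF_4$ formula are a useful sanity check: they must arise from the $3/7$ factors in the $e_3$-row of the connection table combined with the $2\sqrt{3}/7$ normalization of $\Pi$ and the factor $2$ in front. Consistency of these numerical coefficients, together with the overall factor $4\sqrt{3}/7$, will be the concrete test that the algebra has been carried out correctly.
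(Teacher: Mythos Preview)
Your proposal is correct and follows essentially the same route as the paper: reduce to the second-fundamental-form term via constant curvature of $S^7$, expand $\nabla^\perp_{e_j}V$, $e_i\times(\cdot)$, and $\langle V,\Pi(e_i,e_j)\rangle$ from the three tables of Lemma~\ref{lem:eqn for A3}, and then regroup into the complex combinations $\mathcal{V}_1,\mathcal{V}_2$. The paper carries out exactly this bookkeeping explicitly, and at the very end packages the remaining cross-terms using the identity $2i(-V_3 e_1 + V_4 e_2) = -\mathcal{V}_2(i e_1 + e_2) + \bar{\mathcal{V}}_2(-i e_1 + e_2)$, which is the one small algebraic trick you may want to have on hand.
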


\begin{proof}
By the third equation of Lemma \ref{lem:eqn for A3}, we have
\begin{align*}
&\sum_{i,j=1}^3 \langle V, \Pi (e_i, e_j) \rangle e_i \times \nabla^{\perp}_{e_j} V\\
=&
\frac{2 \sqrt{3}}{7} V_1 (e_1 \times \nabla^{\perp}_{e_1} V - e_2 \times \nabla^{\perp}_{e_2} V) 
+
\frac{2 \sqrt{3}}{7} V_2 (e_1 \times \nabla^{\perp}_{e_2} V + e_2 \times \nabla^{\perp}_{e_1} V)\\
&-
\frac{4 \sqrt{3}}{7} V_3 (e_1 \times \nabla^{\perp}_{e_3} V + e_3 \times \nabla^{\perp}_{e_1} V)
+
\frac{4 \sqrt{3}}{7} V_4 (e_2 \times \nabla^{\perp}_{e_3} V + e_3 \times \nabla^{\perp}_{e_2} V).
\end{align*}
By the first and the second equations of Lemma \ref{lem:eqn for A3}, we have 
\begin{align*}
\nabla^{\perp}_{e_1} V 
&= \sum_{j=4}^7 e_1 (V_j) \eta_j 
+ \frac{3}{7} \left( -V_1 \eta_4 - V_2 \eta_3 + V_3 \eta_2 + V_4 \eta_1 \right), \\
\nabla^{\perp}_{e_2} V 
&= \sum_{j=4}^7 e_2 (V_j) \eta_j 
+ \frac{3}{7} \left( V_1 \eta_3 - V_2 \eta_4 - V_3 \eta_1 + V_4 \eta_2 \right), \\
\nabla^{\perp}_{e_3} V 
&= \sum_{j=4}^7 e_3 (V_j) \eta_j 
+ \frac{3}{7} \left( 7 V_1 \eta_2 - 7 V_2 \eta_1 - 5 V_3 \eta_4 + 5 V_4 \eta_3 \right).
\end{align*}
Then by the second equation of Lemma \ref{lem:eqn for A3} 
and a straightforward computation, we obtain 
\begin{align*}
&\left. \frac{d^2}{dt^2} F(t V) \right |_{t=0} \\
=&
\frac{4 \sqrt{3}}{7} V_1 
\left \{ (-e_1(V_4) - e_2 (V_3)) \eta_1 + (-e_1(V_3) + e_2 (V_4)) \eta_2 \right. \\
   &\qquad \qquad \left. + (e_1(V_2) + e_2 (V_1)) \eta_3 + (e_1(V_1) -e_2 (V_2)) \eta_4
\right \}\\
&+
\frac{4 \sqrt{3}}{7} V_2
\left \{ (-e_2(V_4) + e_1 (V_3)) \eta_1 + (-e_2(V_3) - e_1 (V_4)) \eta_2 \right. \\
   &\qquad \qquad \left. + (e_2(V_2) - e_1 (V_1)) \eta_3 + (e_2(V_1) + e_1 (V_2)) \eta_4
\right \}\\
&-
\frac{8 \sqrt{3}}{7} V_3
\left \{ \left(-e_3(V_4) - e_1 (V_2) + \frac{12}{7} V_3 \right) \eta_1 
          + \left(-e_3(V_3) + e_1 (V_1) - \frac{12}{7} V_4 \right) \eta_2 \right. \\
   &\qquad \qquad \left. + \left(e_3(V_2) - e_1 (V_4) + \frac{24}{7} V_1 \right) \eta_3 
           + \left(e_3(V_1) + e_1 (V_3) - \frac{24}{7} V_2 \right) \eta_4
\right \}\\
&+
\frac{8 \sqrt{3}}{7} V_4 
\left \{ \left(e_3(V_3) - e_2 (V_2) + \frac{12}{7} V_4 \right) \eta_1 
          + \left(-e_3(V_4) + e_2 (V_1) + \frac{12}{7} V_3 \right) \eta_2 \right. \\
   &\qquad \qquad \left. + \left(-e_3(V_1) - e_2 (V_4) + \frac{24}{7} V_2 \right) \eta_3 
           + \left(e_3(V_2) + e_2 (V_3) + \frac{24}{7} V_1 \right) \eta_4
\right \}.
\end{align*}
Hence 
\begin{align*}
F_1+i F_2
=& \frac{4 \sqrt{3}}{7} 
V_1 \left( -e_1 (V_4 + i V_3) -e_2 (V_3 -i V_4) \right)\\
&+
\frac{4 \sqrt{3}}{7} 
V_2 \left( -e_2 (V_4 + i V_3) +e_1 (V_3 -i V_4) \right)\\
&- 
\frac{8 \sqrt{3}}{7} 
V_3 \left( -e_3 (V_4 + i V_3) +e_1 (i V_1 - V_2) + \frac{12}{7} (V_3 -i V_4) \right)\\
&+
\frac{8 \sqrt{3}}{7} 
V_4 \left( e_3 (V_3 - i V_4) +e_2 (i V_1 - V_2) + \frac{12}{7} (V_4 +i V_3)  \right) \\
=&
\frac{4 \sqrt{3}}{7} 
V_1  (-i e_1 -e_2) (V_3 -i V_4) 
+
\frac{4 \sqrt{3}}{7} 
V_2  (e_1 -i e_2) (V_3 -i V_4)  \\
&- 
\frac{8 \sqrt{3}}{7} 
V_3   \left( i e_1 (V_1 + i V_2) + \left( -i e_3 + \frac{12}{7} \right) (V_3 -i V_4) \right)\\
&+
\frac{8 \sqrt{3}}{7} 
V_4   \left( i e_2 (V_1 + i V_2) + \left( e_3 + \frac{12}{7} i \right) (V_3 -i V_4) \right), 
\end{align*}
\begin{align*}
F_3-i F_4
=& \frac{4 \sqrt{3}}{7} 
V_1 \left( e_1 (V_2 - i V_1) +e_2 (V_1 +i V_2) \right)\\
&+
\frac{4 \sqrt{3}}{7} 
V_2 \left( e_2 (V_2 - i V_1) -e_1 (V_1 +i V_2) \right)\\
&- 
\frac{8 \sqrt{3}}{7} 
V_3 \left( e_3 (V_2 - i V_1) - e_1 (V_4 +i V_3) + \frac{24}{7} (V_1 +i V_2) \right)\\
&+
\frac{8 \sqrt{3}}{7} 
V_4 \left( -e_3 (V_1 + i V_2) -e_2 (V_4 + i V_3) + \frac{24}{7} (V_2 -i V_1)  \right)\\
=&
\frac{4 \sqrt{3}}{7} 
V_1  (-i e_1 +e_2) (V_1 +i V_2) 
+
\frac{4 \sqrt{3}}{7} 
V_2  (-e_1 -i e_2) (V_1 +i V_2)  \\
&- 
\frac{8 \sqrt{3}}{7} 
V_3   \left( \left( -i e_3 + \frac{24}{7} \right) (V_1 +i V_2) -i e_1 (V_3 - i V_4) \right)\\
&+
\frac{8 \sqrt{3}}{7} 
V_4   \left( \left( -e_3 - \frac{24}{7} i \right) (V_1+i V_2) - i e_2 (V_3 - i V_4) \right).
\end{align*}
Using 
$2i (-V_3 e_1 + V_4 e_2) = - \mathcal{V}_2 (i e_1 + e_2) + 
\bar{\mathcal{V}}_2 (- i e_1 + e_2)$, 
we obtain the statement. 
\end{proof}

By \cite[(6.24),(6.25)]{Kdeform} and the proof of \cite[Proposition 6.22]{Kdeform}, 
we know the following about $\ker D$, 
where $D$ is given in Proposition \ref{prop:explicit D}. 
Note that $D$ in this paper corresponds to $D + {\rm id}_{\nu}$ in \cite{Kdeform}. 

\begin{lemma} \label{lem:DV=0}
For $V = \sum_{j=1}^4 V_j \eta_j \in C^\infty (L, \nu)$, 
set $\mathcal{V}_1 = V_1 + i V_2$ and $\mathcal{V}_2 = V_3 - i V_4$. 
Then $D V=0$ is equivalent to 
\begin{align} \label{eq:DV=0}
\begin{split}
\left(i e_3 - \frac{8}{7} \right) \mathcal{V}_1 + (-i e_1 + e_2) \mathcal{V}_2 &= 0, \\
- (i e_1 + e_2) \mathcal{V}_1 + (-i e_3 +4) \mathcal{V}_2 &= 0.
\end{split}
\end{align}
By using the notation in Appendix \ref{sec:R irr rep}, 
elements of $\ker D$ are explicitly described as 
\begin{align} \label{eq: elements of ker D}
\begin{split}
\mathcal{V}_{1} &= 
-i \sqrt{\frac{7}{10}} \langle \rho_{6} (\cdot) v^{(6)}_{5}, u_{1} \rangle 
- 2i  
\sqrt{\frac{7}{6}}
\langle \rho_{4} (\cdot) v^{(4)}_{3}, u_{2} \rangle, \\
\mathcal{V}_{2} &= 
\langle \rho_{6} (\cdot) v^{(6)}_{4}, u_{1} \rangle 
+ \langle \rho_{4} (\cdot) v^{(4)}_{2}, u_{2} \rangle
+ \langle \rho_{4} (\cdot) v^{(4)}_{4}, u_{3} \rangle
\end{split}
\end{align}
for $u_{1} \in V_{6}, u_{2}, u_{3} \in V_{4}$. 
\end{lemma}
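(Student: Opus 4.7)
The plan is to verify the two assertions in succession. First, I would establish (\ref{eq:DV=0}) by direct computation. Since the nearly parallel $G_2$-structure on $S^7$ has $T = \mathrm{id}_{TM}$, the remark after Lemma \ref{lem:self-ad} gives $DV = \sum_{i=1}^3 e_i \times \nabla^{\perp}_{e_i} V + V$. Writing $V = \sum_j V_j \eta_j$, Lemma \ref{lem:eqn for A3} lets one expand each $\eta_j$-coefficient of $DV$ explicitly. Taking the combinations $\eta_1 + i\eta_2$ and $\eta_3 - i\eta_4$ and setting $\mathcal{V}_1 = V_1 + iV_2$, $\mathcal{V}_2 = V_3 - iV_4$ collapses the four real equations into the two complex ones in (\ref{eq:DV=0}); the constants $-8/7$ and $4$ arise from combining the diagonal contributions of $\sum_i \nabla^{\perp}_{e_i}\eta_j$ with the $+V$ torsion term. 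This step is mechanical bookkeeping.

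For the second part I would exploit representation theory. The normal frame $\{\eta_j\}$ is $SU(2)$-equivariant via $\rho_3$, so $D$ commutes with left translation on $C^\infty(A_3, \nu) \cong C^\infty(SU(2), \mathbb{C}^2)$, where the two complex factors are the $\mathcal{V}_1$- and $\mathcal{V}_2$-slots. By Peter--Weyl, $C^\infty(SU(2), \mathbb{C}) = \bigoplus_k V_k \otimes V_k^*$, and $\ker D$ respects this decomposition, so one may work on each $V_k$ separately. Fixing $w \in V_k$ and writing $\mathcal{V}_\alpha = \langle \rho_k(\cdot) u_\alpha, w \rangle$ for $\alpha = 1, 2$, the left-invariant vector fields $e_i$ act on the $u_\alpha$-slot via the infinitesimal representation. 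Using the identifications $E_1 - iE_2 = 2X$, $E_1 + iE_2 = -2Y$, $E_3 = iH$ from the standard $\mathfrak{sl}_2$-triple, together with the factor $1/\sqrt{7}$ in $e_1, e_2$, the system (\ref{eq:DV=0}) becomes a finite-dimensional linear system on $V_k \oplus V_k$.

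Expanding in the weight basis $\{v_m^{(k)}\}$ (with $v_m^{(k)}$ of weight $k - 2m$) and eliminating $u_1$ between the two equations, a nonzero solution at interior weight $b = k - 2m$ (with $0 \leq m \leq k-1$) requires $4(m+1)(k-m) + (b+4)(7b-6) = 0$, which rearranges into the clean identity $(k+1)^2 + 6(b+2)^2 = 49$. The only integer solutions with $k \geq 0$ and $|b| \leq k$ are $(k,b) = (6,-2)$ and $(k,b) = (4,0)$, producing the $V_6$-summand (with $v_5^{(6)}$ for $\mathcal{V}_1$ and $v_4^{(6)}$ for $\mathcal{V}_2$) and the first $V_4$-summand (with $v_3^{(4)}, v_2^{(4)}$). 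A separate boundary case at $m = k$, where the second equation degenerates to $(4-k) u_{2,k} = 0$ with no $u_{1, k+1}$ available in the first equation, forces $k = 4$ and produces the third summand, with $v_4^{(4)}$ for $\mathcal{V}_2$ and $\mathcal{V}_1 = 0$. The prefactors $-i\sqrt{7/10}$ and $-2i\sqrt{7/6}$ in (\ref{eq: elements of ker D}) are then read off from the first equation after normalizing the free $u_{2, m}$.

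The main obstacle is tracking the normalizations carefully: the asymmetric scaling $e_{1,2} \sim E_{1,2}/\sqrt{7}$ versus $e_3 = E_3$ is what produces the nonsymmetric constants $8/7$ and $4$ in (\ref{eq:DV=0}), and these constants are precisely what conspire to yield the Diophantine identity $(k+1)^2 + 6(b+2)^2 = 49$ that isolates exactly $V_6$ and $V_4$ as the representations appearing in $\ker D$.
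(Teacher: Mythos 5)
Your proposal is correct, and it in fact supplies the argument that the paper itself omits: the paper proves this lemma only by citation to \cite[(6.24),(6.25) and Proposition 6.22]{Kdeform} (with the shift $D\mapsto D+{\rm id}_\nu$), whereas you reconstruct it from Lemma \ref{lem:eqn for A3} and the differentiation formulas (\ref{eq:diff by Ei}). I checked the key steps: the diagonal contributions give $-15/7+1=-8/7$ and $3+1=4$ as you say, the interior $2\times 2$ determinant condition does rearrange to $(k+1)^2+6(b+2)^2=49$, and the resulting components and prefactors $-i\sqrt{7/10}$, $-2i\sqrt{7/6}$ agree with (\ref{eq: elements of ker D}); the dimension count $2(7+5+5)=34$ also matches the paper. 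One cosmetic slip: $(k,b)=(4,-4)$ also solves your Diophantine equation with $|b|\le k$, but it corresponds to $m=k$ and is correctly absorbed into your separate boundary case, so nothing is lost.
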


\begin{lemma} \label{lem:L2 A3}
For $V, W \in \ker D$, 
the $L^2$ inner product of 
$
\left. \frac{d^2}{d t^2} F(t V) \right|_{t=0}
$
and $W$ is given by 
$$
\left \langle \left. \frac{d^2}{d t^2} F(t V) \right|_{t=0}, W \right \rangle_{L^2} 
=
\frac{4 \sqrt{3}}{7} {\rm Re} \left( I(V,W) + I(V+W,V) - I(V,V) - I(W,V) \right). 
$$
Here, 
$$
I(V,W) = \int_{{\rm SU}(2)} 
\left( \mathcal{V}_1 \mathcal{V}_2 \cdot \overline{(-i e_1 + e_2) \mathcal{W}_1}
+ \frac{1}{2} \mathcal{V}_2^2 \cdot \overline{(3i e_3 -8) \mathcal{W}_1}
\right) dg, 
$$
where 
$V = \sum_{j=1}^4 V_j \eta_j, W = \sum_{j=1}^4 W_j \eta_j$, 
$\mathcal{V}_1 = V_1 + i V_2, \mathcal{V}_2 = V_3 - i V_4$, 
$\mathcal{W}_1 = W_1 + i W_2$ and $\mathcal{W}_2 = W_3 - i W_4$. 
\end{lemma}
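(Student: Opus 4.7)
The plan is to expand the $L^2$-pairing in the complex coordinates of Lemma~\ref{lem:explicit second der A3} and then reorganise the result by integration by parts on ${\rm SU}(2)$ together with the linearisation identities $DW=0$ of Lemma~\ref{lem:DV=0}. Since $\{\eta_1,\dots,\eta_4\}$ is orthonormal, $V_1W_1+V_2W_2=\mathrm{Re}(\mathcal{V}_1\bar{\mathcal{W}}_1)$ and $V_3W_3+V_4W_4=\mathrm{Re}(\mathcal{V}_2\bar{\mathcal{W}}_2)$, so writing $\mathcal{F}_1=F_1+iF_2$, $\mathcal{F}_2=F_3-iF_4$ and substituting the formulas of Lemma~\ref{lem:explicit second der A3} gives
\[
\Bigl\langle \left.\tfrac{d^2}{dt^2}F(tV)\right|_{t=0},W\Bigr\rangle_{L^2}=\int_{{\rm SU}(2)}\mathrm{Re}\bigl(\mathcal{F}_1\bar{\mathcal{W}}_1+\mathcal{F}_2\bar{\mathcal{W}}_2\bigr)\,dg,
\]
a polynomial expression in $\mathcal{V}_i,\bar{\mathcal{V}}_i,\mathcal{W}_i,\bar{\mathcal{W}}_i$ and the operators $e_1,e_2,e_3$.

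Next, observe that the left-invariant vector fields $e_1,e_2,e_3$ generate right translations and hence preserve the Haar measure on ${\rm SU}(2)$; consequently each $e_j$ is antisymmetric with respect to the $L^2$-pairing, and any complex combination $\sum c_je_j$ has adjoint $-\sum\bar c_je_j$ (so each $ie_j$ is self-adjoint). Use this to transfer every derivative falling on $\mathcal{V}_1,\mathcal{V}_2$ in $\mathcal{F}_i\bar{\mathcal{W}}_i$ onto $\bar{\mathcal{W}}_1,\bar{\mathcal{W}}_2$, and then use the two relations
\[
(-ie_1+e_2)\mathcal{W}_2=-(ie_3-\tfrac{8}{7})\mathcal{W}_1,\qquad (-ie_3+4)\mathcal{W}_2=(ie_1+e_2)\mathcal{W}_1
\]
of Lemma~\ref{lem:DV=0} to eliminate every surviving $\mathcal{W}_2$-derivative in favour of a first-order operator acting on $\mathcal{W}_1$. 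The arithmetic coincidence $(2ie_3-\tfrac{48}{7})+(ie_3-\tfrac{8}{7})=3ie_3-8$ shows how the direct $(2ie_3-\tfrac{48}{7})\mathcal{V}_1$ summand of $\mathcal{F}_2$ combines with one application of the $DW=0$ relation to produce the operator $(3ie_3-8)$ appearing in $I(V,W)$; analogously, $(-ie_1+e_2)$ appears both directly and after substitution.

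The claim then reduces to matching coefficients. A direct expansion gives the polarisation formula
\[
I(V+W,V)-I(V,V)-I(W,V)=\int\Bigl((\mathcal{V}_1\mathcal{W}_2+\mathcal{W}_1\mathcal{V}_2)\overline{(-ie_1+e_2)\mathcal{V}_1}+\mathcal{V}_2\mathcal{W}_2\,\overline{(3ie_3-8)\mathcal{V}_1}\Bigr)dg,
\]
so the right-hand side of the lemma is $\frac{4\sqrt{3}}{7}$ times the real part of the integral of $\mathcal{V}_1\mathcal{V}_2\,\overline{(-ie_1{+}e_2)\mathcal{W}_1}$, $\tfrac{1}{2}\mathcal{V}_2^2\,\overline{(3ie_3{-}8)\mathcal{W}_1}$ and the two polarised terms just listed. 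The main obstacle is showing that, after the reorganisation above, the integrand produced from $\mathrm{Re}(\mathcal{F}_1\bar{\mathcal{W}}_1+\mathcal{F}_2\bar{\mathcal{W}}_2)$ matches this combination exactly; in particular the remaining monomials, such as those built from $\bar{\mathcal{V}}_2(-ie_1+e_2)\mathcal{V}_1$ in $\mathcal{F}_1$ and from $\bar{\mathcal{V}}_2(-ie_1+e_2)\mathcal{V}_2$ in $\mathcal{F}_2$, must either cancel pairwise or become purely imaginary and drop out under $\mathrm{Re}$. No structural shortcut is apparent beyond integration by parts and $DW=0$, so the proof is ultimately a careful term-by-term verification.
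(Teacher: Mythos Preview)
Your approach is essentially the paper's: write the pairing as $\mathrm{Re}\int(\mathcal{F}_1\bar{\mathcal{W}}_1+\mathcal{F}_2\bar{\mathcal{W}}_2)\,dg$, integrate by parts, invoke the relations of Lemma~\ref{lem:DV=0}, and polarise. Two clarifications will finish it cleanly.

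First, both $DV=0$ and $DW=0$ are used, not just $DW=0$. The $\mathcal{V}_2^2$ terms from $\mathcal{F}_1$ and $\mathcal{F}_2$ are integrated by parts to put derivatives on $\mathcal{W}_1,\mathcal{W}_2$, and then $DW=0$ combines them into $\tfrac12\mathcal{V}_2^2\,\overline{(3ie_3-8)\mathcal{W}_1}$. On the other hand, the summand $\bar{\mathcal{V}}_2\bigl((2ie_3-\tfrac{48}{7})\mathcal{V}_1-(-ie_1+e_2)\mathcal{V}_2\bigr)$ in $\mathcal{F}_2$ is simplified \emph{directly} by $DV=0$ (your ``arithmetic coincidence'') to $\bar{\mathcal{V}}_2(3ie_3-8)\mathcal{V}_1$, with no integration by parts.

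Second, your last paragraph misidentifies what happens to the terms $\bar{\mathcal{V}}_2(-ie_1+e_2)\mathcal{V}_1$, $\bar{\mathcal{V}}_1(-ie_1+e_2)\mathcal{V}_1$, and the simplified $\bar{\mathcal{V}}_2(3ie_3-8)\mathcal{V}_1$: they neither cancel nor become imaginary. After the two steps above, the integrand is exactly
\[
\mathcal{V}_1\mathcal{V}_2\,\overline{(-ie_1{+}e_2)\mathcal{W}_1}+\tfrac12\mathcal{V}_2^2\,\overline{(3ie_3{-}8)\mathcal{W}_1}
+(\bar{\mathcal{V}}_2\bar{\mathcal{W}}_1+\bar{\mathcal{V}}_1\bar{\mathcal{W}}_2)(-ie_1{+}e_2)\mathcal{V}_1
+\bar{\mathcal{V}}_2\bar{\mathcal{W}}_2(3ie_3{-}8)\mathcal{V}_1.
\]
The first two summands give $I(V,W)$; and since $\mathrm{Re}(z)=\mathrm{Re}(\bar z)$, the last two summands have the same real part as $(\mathcal{V}_2\mathcal{W}_1+\mathcal{V}_1\mathcal{W}_2)\overline{(-ie_1{+}e_2)\mathcal{V}_1}+\mathcal{V}_2\mathcal{W}_2\,\overline{(3ie_3{-}8)\mathcal{V}_1}$, which is precisely your polarisation expansion of $I(V+W,V)-I(V,V)-I(W,V)$. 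There are no remaining monomials.
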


\begin{proof}
Use the notation in Lemma \ref{lem:explicit second der A3}. 
First note that 
\begin{align*}
\left \langle \left. \frac{d^2}{d t^2} F(t V_1) \right|_{t=0}, W \right \rangle_{L^2} 
=&
{\rm Re} 
\int_{{\rm SU}(2)} 
\left(
(F_1 + i F_2) \cdot \bar{\mathcal{W}}_1 + (F_3 -i F_4) \cdot \bar{\mathcal{W}}_2
\right) dg.
\end{align*}
By using Lemma \ref{lem:integ by parts}, 
we can integrate by parts to obtain 
\begin{align*}
- \int_{{\rm SU}(2)}
(i e_1 + e_2) (\mathcal{V}_1 \mathcal{V}_2) \cdot \bar{\mathcal{W}}_1 dg
=
\int_{{\rm SU}(2)}
\mathcal{V}_1 \mathcal{V}_2 \cdot \overline{(- i e_1 + e_2) \mathcal{W}_1} dg, 
\end{align*}
\begin{align*}
&\int_{{\rm SU}(2)}
\left(
\left(i e_3 - \frac{24}{7} \right) (\mathcal{V}_2^2) \cdot \bar{\mathcal{W}}_1
+
\frac{1}{2} (i e_1 + e_2) (\mathcal{V}_2^2) \cdot \bar{\mathcal{W}}_2 \right) dg \\
=&
\int_{{\rm SU}(2)}
\mathcal{V}_2^2 \cdot
\overline{
\left \{
\left(i e_3 - \frac{24}{7} \right) \mathcal{W}_1 
- \frac{1}{2} (-i e_1 + e_2) \mathcal{W}_2 \right \} 
} dg \\
\stackrel{(\ref{eq:DV=0})}
=&
\frac{1}{2}
\int_{{\rm SU}(2)}
\mathcal{V}_2^2 \cdot
\overline{
(3 i e_3 -8) \mathcal{W}_1} dg.
\end{align*}
We also have 
$$
\bar{\mathcal{V}}_2 \left( \left( 2 i e_3 - \frac{48}{7} \right)\mathcal{V}_1 
-(-i e_1 + e_2) \mathcal{V}_2  \right) 
\stackrel{(\ref{eq:DV=0})}
=
\bar{\mathcal{V}}_2 \cdot 
(3 i e_3 - 8) \mathcal{V}_1.
$$
Thus it follows that 
\begin{align*}
&\left \langle \left. \frac{d^2}{d t^2} F(t V) \right|_{t=0}, W \right \rangle_{L^2} \\
=&
\frac{4 \sqrt{3}}{7} 
{\rm Re} 
\int_{{\rm SU}(2)} 
\left(
\mathcal{V}_1 \mathcal{V}_2 \cdot \overline{(-i e_1 + e_2) \mathcal{W}_1}
+ 
\frac{1}{2} \mathcal{V}_2^2 \cdot \overline{(3 i e_3-8) \mathcal{W}_1} \right. \\
&
\qquad \qquad 
\left.
+
(\bar{\mathcal{V}}_2 \bar{\mathcal{W}}_1 + \bar{\mathcal{V}}_1 \bar{\mathcal{W}}_2) 
\cdot (-i e_1 + e_2) \mathcal{V}_1
+
\bar{\mathcal{V}}_2 \bar{\mathcal{W}}_2
\cdot (3 i e_3-8) \mathcal{V}_1
\right) dg.
\end{align*}
From the equations 
$
\mathcal{V}_2 \mathcal{W}_1 + \mathcal{V}_1 \mathcal{W}_2
=
(\mathcal{V}_1 + \mathcal{W}_1) (\mathcal{V}_2 + \mathcal{W}_2) 
- (\mathcal{V}_1 \mathcal{V}_2 + \mathcal{W}_1 \mathcal{W}_2)
$
and 
$
2 \mathcal{V}_2 \mathcal{W}_2 
= (\mathcal{V}_2 + \mathcal{W}_2)^2 - \mathcal{V}_2^2 - \mathcal{W}_2^2, 
$
the proof is done.
\end{proof}

Thus we only have to calculate 
$I (V, W)$ for any $V, W \in \ker D$ 
to compute 
$\left \langle \left. \frac{d^2}{d t^2} F(t V) \right|_{t=0}, W \right \rangle_{L^2}. 
$
In fact, we have the following.

\begin{lemma} \label{lem:I(V,W) A3}
For $V, W \in \ker D$, we have 
$$
I(V,W) = 0.
$$
\end{lemma}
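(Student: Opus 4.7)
The plan is to exploit Peter-Weyl and Schur orthogonality on $\mathrm{SU}(2)$, using the matrix-coefficient description of $\ker D$ from equation~(\ref{eq: elements of ker D}). Since $\mathcal{V}_1, \mathcal{V}_2, \mathcal{W}_1$ are each sums of matrix coefficients of the irreducible representations $V_6$ and $V_4$, and the left-invariant vector fields $e_1, e_2, e_3$ act on matrix coefficients via the differential of the representation and therefore preserve each $\mathrm{SU}(2)$-isotype, the functions $(-ie_1+e_2)\mathcal{W}_1$ and $(3ie_3-8)\mathcal{W}_1$ also lie in the isotypic subspace for $V_6 \oplus V_4$.

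First I would expand $\mathcal{V}_1\mathcal{V}_2$ and $\mathcal{V}_2^2$ using the Clebsch-Gordan decompositions
$$
V_6 \otimes V_6 = \bigoplus_{k=0}^{6} V_{2k}, \qquad V_6 \otimes V_4 = \bigoplus_{k=1}^{5} V_{2k}, \qquad V_4 \otimes V_4 = \bigoplus_{k=0}^{4} V_{2k}.
$$
Schur orthogonality on $\mathrm{SU}(2)$ then collapses the integrals defining $I(V,W)$ to contributions coming only from the $V_6$- and $V_4$-isotypic components of these products. I would extract these components explicitly using standard Clebsch-Gordan coefficients, keeping careful track of the specific weight vectors $v^{(6)}_4, v^{(6)}_5, v^{(4)}_2, v^{(4)}_3, v^{(4)}_4$ that appear in~(\ref{eq: elements of ker D}), and then pair them against $(-ie_1+e_2)\mathcal{W}_1$ and $(3ie_3-8)\mathcal{W}_1$ via Schur orthogonality. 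This produces a finite linear combination of Hermitian pairings of vectors in $V_6$ and $V_4$, whose cancellation I would verify directly.

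The main obstacle is the volume of Clebsch-Gordan bookkeeping with the specific normalizations from~(\ref{eq: elements of ker D}) (the factors $-i\sqrt{7/10}$, $-2i\sqrt{7/6}$, and so on). A strategic shortcut comes from $\mathrm{SU}(2)$-equivariance: since $F$ is equivariant under the left-translation action on $A_3 \cong \mathrm{SU}(2)$, the form $I$ is itself $\mathrm{SU}(2)$-invariant, so as an element of $\mathrm{Hom}_{\mathrm{SU}(2)}(\mathrm{Sym}^2 \ker D, \overline{\ker D})$ it takes values in a small finite-dimensional space. Checking vanishing on highest-weight vectors of each irreducible summand of $\mathrm{Sym}^2(V_6 \oplus V_4 \oplus V_4)$ that contains a copy of $V_6$ or $V_4$ then reduces the lemma to a handful of explicit scalar identities.
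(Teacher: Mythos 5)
Your plan is essentially the paper's own proof: the paper likewise substitutes the matrix-coefficient description (\ref{eq: elements of ker D}), applies the derivative formulas (\ref{eq:diff by Ei}), and reduces $I(V,W)$ via Schur orthogonality in the Clebsch--Gordan form of Lemma \ref{lem:Schur orthog sym2} (together with the vanishing Lemmas \ref{lem:0 integ abc} and \ref{lem:0 integ 4,6}, which your isotypic/symmetric-square remarks subsume) to explicit pairings computed from Lemma \ref{lem:explicit alpha}, which cancel. The only genuinely new element in your write-up, the equivariance shortcut, is optional; otherwise the route and the final direct cancellation check coincide with the paper's.
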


\begin{proof}
For 
$V = \sum_{j=1}^4 V_j \eta_j$ and $W = \sum_{j=1}^4 W_j \eta_j$, 
set 
$\mathcal{V}_1 = V_1 + i V_2, \mathcal{V}_2 = V_3 - i V_4$, 
$\mathcal{W}_1 = W_1 + i W_2$ and $\mathcal{W}_2 = W_3 - i W_4$. 
By Lemma \ref{lem:DV=0}, we may assume that 
$
\mathcal{V}_{1}, \mathcal{V}_{2}$ 
are given by (\ref{eq: elements of ker D}) for 
$u_{1} \in V_{6}, u_{2}, u_{3} \in V_{4}$ 
and 
$
\mathcal{W}_{1}, \mathcal{W}_{2}$ 
are given by the right hand side of (\ref{eq: elements of ker D}), 
where we replace $u_j$ 
with $w_j$ for $j=1,2,3$ and 
$w_{1} \in V_{6}, w_{2}, w_{3} \in V_{4}$.

By (\ref{eq:diff by Ei}) 
and $\{ e_1, e_2, e_3\} = \{ E_1/\sqrt{7}, E_2/\sqrt{7}, E_3 \}$, 
note that 
\begin{align*}
(-i e_1 +e_2) \mathcal{W}_1
&=
2 \sqrt{\frac{3}{5}} \langle \rho_{6} (\cdot) v^{(6)}_{6}, w_{1} \rangle
+ 
\frac{8}{\sqrt{6}} \langle \rho_{4} (\cdot) v^{(4)}_{4}, w_{2} \rangle, \\
(3 i e_3 - 8) \mathcal{W}_1 
&= 
-4i \sqrt{\frac{7}{10}} \langle \rho_{6} (\cdot) v^{(6)}_{5}, w_{1} \rangle
+ 
4i \sqrt{\frac{7}{6}} \langle \rho_{4} (\cdot) v^{(4)}_{3}, w_{2} \rangle. 
\end{align*}
Then by Lemmas \ref{lem:0 integ abc} and \ref{lem:0 integ 4,6}, we compute 
\begin{align*}
I(V,W) =& \int_{{\rm SU}(2)} 
\left( \mathcal{V}_1 \mathcal{V}_2 \cdot \overline{(-i e_1 + e_2) \mathcal{W}_1}
+ \frac{1}{2} \mathcal{V}_2^2 \cdot \overline{(3i e_3 -8) \mathcal{W}_1}
\right) dg \\
=&
\int_{{\rm SU}(2)} 
\left(
-2i \sqrt{\frac{7}{6}} \langle \rho_{4} (g) v^{(4)}_{3}, u_{2} \rangle
\langle \rho_{4} (g) v^{(4)}_{4}, u_{3} \rangle
\cdot 
2 \sqrt{\frac{3}{5}} \overline{\langle \rho_{6} (g) v^{(6)}_{6}, w_{1} \rangle}  \right. \\
&\qquad -
i \sqrt{\frac{7}{10}} \langle \rho_{6} (g) v^{(6)}_{5}, u_{1} \rangle
\langle \rho_{4} (g) v^{(4)}_{2}, u_{2} \rangle
\cdot 
\frac{8}{\sqrt{6}} \overline{\langle \rho_{4} (g) v^{(4)}_{4}, w_{2} \rangle} \\
&\qquad -
2i \sqrt{\frac{7}{6}} \langle \rho_{4} (g) v^{(4)}_{3}, u_{2} \rangle
\langle \rho_{6} (g) v^{(6)}_{4}, u_{1} \rangle
\cdot 
\frac{8}{\sqrt{6}} \overline{\langle \rho_{4} (g) v^{(4)}_{4}, w_{2} \rangle} \\
&\qquad +
4i \sqrt{\frac{7}{10}} \langle \rho_{4} (g) v^{(4)}_{2}, u_{2} \rangle
\langle \rho_{4} (g) v^{(4)}_{4}, u_{3} \rangle
\cdot 
\overline{\langle \rho_{6} (g) v^{(6)}_{5}, w_{1} \rangle} \\
&\qquad \left. -
4i \sqrt{\frac{7}{6}} \langle \rho_{6} (g) v^{(6)}_{4}, u_{1} \rangle
\langle \rho_{4} (g) v^{(4)}_{2}, u_{2} \rangle
\cdot 
\overline{\langle \rho_{4} (g) v^{(4)}_{3}, w_{2} \rangle} 
\right) dg\\
\stackrel{(\ref{eq:conjugate})}
=&
\int_{{\rm SU}(2)} 
\left(
-4i \sqrt{\frac{7}{10}} \langle \rho_{4} (g) v^{(4)}_{3}, u_{2} \rangle
\langle \rho_{4} (g) v^{(4)}_{4}, u_{3} \rangle
\cdot 
\overline{\langle \rho_{6} (g) v^{(6)}_{6}, w_{1} \rangle}  \right. \\
&\qquad +
i \sqrt{\frac{7}{10}} \cdot 
\frac{8}{\sqrt{6}}
\langle \rho_{4} (g) v^{(4)}_{2}, u_{2} \rangle 
\langle \rho_{4} (g) v^{(4)}_{0}, w_{2}^* \rangle
\cdot  \overline{
\langle \rho_{6} (g) v^{(6)}_{1}, u_{1}^* \rangle} \\
&\qquad -
2i \sqrt{\frac{7}{6}} \cdot \frac{8}{\sqrt{6}} 
\langle \rho_{4} (g) v^{(4)}_{3}, u_{2} \rangle
\langle \rho_{4} (g) v^{(4)}_{0}, w_{2}^* \rangle
\cdot 
\overline{
\langle \rho_{6} (g) v^{(6)}_{2}, u_{1}^* \rangle} \\
&\qquad +
4i \sqrt{\frac{7}{10}} \langle \rho_{4} (g) v^{(4)}_{2}, u_{2} \rangle
\langle \rho_{4} (g) v^{(4)}_{4}, u_{3} \rangle
\cdot 
\overline{\langle \rho_{6} (g) v^{(6)}_{5}, w_{1} \rangle} \\
&\qquad \left. 
+
4i \sqrt{\frac{7}{6}} 
\langle \rho_{4} (g) v^{(4)}_{2}, u_{2} \rangle
\langle \rho_{4} (g) v^{(4)}_{1}, w_{2}^* \rangle
\cdot 
\overline{
\langle \rho_{6} (g) v^{(6)}_{2}, u_{1}^* \rangle
} 
\right) dg.
\end{align*}
By Lemmas \ref{lem:Schur orthog sym2} 
and \ref{lem:explicit alpha}, we further compute 
\begin{align*}
=&
\frac{1}{7}
\left(
- 4i \sqrt{\frac{7}{10}}  
\langle  v^{(4)}_{3} \otimes v^{(4)}_{4}, 
\alpha_{4,4,1} (v^{(6)}_{6}) \rangle 
\cdot
\overline{
\langle 
u_{2} \otimes u_{3}, \alpha_{4,4,1} (w_{1})
\rangle}
\right. \\
&\qquad +
i \sqrt{\frac{7}{10}} \cdot \frac{8}{\sqrt{6}}  
\langle  v^{(4)}_{2} \otimes v^{(4)}_{0}, 
\alpha_{4,4,1} (v^{(6)}_{1}) \rangle 
\cdot
\overline{
\langle 
u_{2} \otimes w_{2}^*, \alpha_{4,4,1} (u_{1}^*)
\rangle}\\
&\qquad -
2i \sqrt{\frac{7}{6}} \cdot \frac{8}{\sqrt{6}} 
\langle  v^{(4)}_{3} \otimes v^{(4)}_{0}, 
\alpha_{4,4,1} (v^{(6)}_{2}) \rangle 
\cdot
\overline{
\langle 
u_{2} \otimes w_{2}^*, \alpha_{4,4,1} (u_{1}^*)
\rangle}\\
&\qquad + 
4i \sqrt{\frac{7}{10}}
\langle  v^{(4)}_{2} \otimes v^{(4)}_{4}, 
\alpha_{4,4,1} (v^{(6)}_{5}) \rangle 
\cdot
\overline{
\langle 
u_{2} \otimes u_{3}, \alpha_{4,4,1} (w_{1})
\rangle}\\
&\qquad \left. 
+
4i \sqrt{\frac{7}{6}} 
\langle  v^{(4)}_{2} \otimes v^{(4)}_{1}, 
\alpha_{4,4,1} (v^{(6)}_{2}) \rangle 
\cdot
\overline{
\langle 
u_{2} \otimes w_{2}^*, \alpha_{4,4,1} (u_{1}^*)
\rangle} \right) \\
\end{align*}
\begin{align*}
=&
\frac{1}{7}
\left(
4i \sqrt{\frac{7}{10}} \cdot \sqrt{c_{4,4,1}} (-24 \sqrt{5} + 24 \sqrt{5}) \cdot 
\overline{
\langle 
u_{2} \otimes u_{3}, \alpha_{4,4,1} (w_{1})
\rangle}
\right. \\
&\left. 
+
4i \sqrt{\frac{7}{6}} \cdot \sqrt{c_{4,4,1}}
\left(
\frac{2}{\sqrt{10}} \cdot (-24 \sqrt{5}) + \frac{4}{\sqrt{6}} \cdot 24 \sqrt{3} -24 \sqrt{2}
\right)
\cdot 
\overline{
\langle 
u_{2} \otimes w_{2}^*, \alpha_{4,4,1} (u_{1}^*)
\rangle} \right) \\
=&0.
\end{align*}
\end{proof}

Theorem \ref{thm:A3 unob to second order} follows from these lemmas.

\begin{proof}[Proof of Theorem \ref{thm:A3 unob to second order}]
Recall that $D$ given in 
Proposition \ref{prop:explicit D} is self-adjoint 
by Lemma \ref{lem:self-ad}. 
Then 
by Lemma \ref{lem:unob second order}, 
we only have to show that 
$
\left \langle \left. \frac{d^2}{d t^2} F(t V) \right|_{t=0}, W \right \rangle_{L^2} = 0
$
for any $V, W \in \ker D$. 
This equation is satisfied 
by Lemmas \ref{lem:L2 A3} and \ref{lem:I(V,W) A3}.  
\end{proof}

\subsection{Deformations of $A_3$ arising from ${\rm Spin}(7)$}

To see whether 
infinitesimal associative deformations of $A_3$ extend to actual deformations, 
it would be important to understand the 
trivial deformations (deformations given by the ${\rm Spin}(7)$-action) 
of $A_3$. 
Since $A_3 \cong {\rm SU}(2)$, 
the dimension of the subgroup of ${\rm Spin}(7)$ preserving $A_3$ is at least 3. 
We show that it is 4-dimensional. 
More precisely, we have the following.

\begin{lemma}\label{lem:stab A3}
Use the notation in (\ref{eq: irr SU(2) on C4}), (\ref{eq:def of eta i}), 
Lemmas \ref{lem:irr decomp su(4)} and \ref{lem:decomp spin(7) su(4)}.
Set $p_0 = \frac{1}{\sqrt{2}} {}^t\! (0, 1, i, 0)$. 
Then we have
\begin{align*}
\left\{ X \in \mathfrak{spin}(7); 
\begin{array}{l}
\langle X \cdot \rho_3 (g) \cdot p_0, 
(\eta_i)_{\rho_3 (g) \cdot p_0} \rangle = 0 \\
\mbox{ for any } g \in {\rm SU}(2) 
\mbox{ and } i= 1, \cdots, 4
\end{array}
\right\}
=
W_1^{\mathfrak{spin}(7)}
\oplus 
W_3^{\mathfrak{su}(4)}.
\end{align*}
\end{lemma}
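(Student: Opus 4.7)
The plan is to use the $\mathrm{SU}(2)$-irreducible decomposition of $\mathfrak{spin}(7)$ under $\mathrm{Ad}\circ\rho_3$ and reduce the orbit-wise tangency condition to a single pointwise check at $p_0$. Let $\mathcal{S}\subset\mathfrak{spin}(7)$ denote the subspace described in the lemma. First I would verify that $\mathcal{S}$ is $\mathrm{Ad}(\rho_3(\mathrm{SU}(2)))$-invariant: for $X\in\mathcal{S}$ and $h\in\mathrm{SU}(2)$, the element $\mathrm{Ad}(\rho_3(h))X$ applied to $\rho_3(g)p_0$ equals $\rho_3(h)\cdot X\cdot\rho_3(h^{-1}g)p_0\in\rho_3(h)T_{\rho_3(h^{-1}g)p_0}A_3=T_{\rho_3(g)p_0}A_3$, using that $\rho_3(\mathrm{SU}(2))$ preserves $A_3$. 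Hence, by Lemmas \ref{lem:irr decomp su(4)} and \ref{lem:decomp spin(7) su(4)}, $\mathcal{S}$ is a sum of irreducible summands in the decomposition
\[
\mathfrak{spin}(7)=W_3^{\mathfrak{su}(4)}\oplus W_5^{\mathfrak{su}(4)}\oplus W_7^{\mathfrak{su}(4)}\oplus W_1^{\mathfrak{spin}(7)}\oplus W_5^{\mathfrak{spin}(7)}.
\]

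The key reduction is that an irreducible summand $W$ is contained in $\mathcal{S}$ if and only if $\mathrm{ev}_{p_0}(W)\subset T_{p_0}A_3$, where $\mathrm{ev}_{p_0}(X)=X\cdot p_0$. Indeed, for fixed $X\in W$ the condition defining $\mathcal{S}$ is equivalent to $\mathrm{Ad}(\rho_3(g^{-1}))X\cdot p_0\in T_{p_0}A_3$ for all $g$; the linear span of the $\mathrm{Ad}\rho_3$-orbit of $X$ is a nonzero invariant subspace of $W$, so it equals $W$ by irreducibility, and therefore every element of $W$ must evaluate into $T_{p_0}A_3$ at $p_0$. This lets us test each summand by computing at a single point.

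I would then verify each summand in turn. For $W_3^{\mathfrak{su}(4)}=d\rho_3(\mathfrak{su}(2))$, the identity $d\rho_3(Y)\cdot p_0=\tfrac{d}{dt}\big|_{0}\rho_3(\exp tY)\cdot p_0\in T_{p_0}A_3$ gives the inclusion. For $W_1^{\mathfrak{spin}(7)}$, which is the $\mathrm{Ad}\rho_3$-trivial line inside $\mathfrak{m}:=\mathfrak{spin}(7)\ominus\mathfrak{su}(4)$, any generator $X$ commutes with $d\rho_3(\mathfrak{su}(2))$, so $X\cdot\rho_3(g)p_0=\rho_3(g)\cdot X\cdot p_0$; one then exhibits such an $X$ explicitly using the basis for $\mathfrak{m}$ supplied by Lemma \ref{lem:decomp spin(7) su(4)} and checks that $X\cdot p_0$ is orthogonal to each $(\eta_j)_{p_0}$ listed in (\ref{eq:def of eta i}). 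For the remaining summands $W_5^{\mathfrak{su}(4)}$, $W_7^{\mathfrak{su}(4)}$, $W_5^{\mathfrak{spin}(7)}$, I would exhibit a single element $X$ in each whose evaluation at $p_0$ has a nonzero component along some $(\eta_j)_{p_0}$; by the reduction above, this rules out the whole summand.

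The main obstacle is the bookkeeping for the $W_1^{\mathfrak{spin}(7)}$ inclusion and the three exclusion checks: one must take the explicit bases given in the appendix for the $\mathrm{SU}(2)$-irreducible summands of $\mathfrak{su}(4)$ and of $\mathfrak{m}$, apply each chosen test element to $p_0=\tfrac{1}{\sqrt{2}}{}^t(0,1,i,0)$, and match against the normal frame (\ref{eq:def of eta i}). The individual computations are routine linear algebra, but the identification of the trivial line inside $\mathfrak{m}$ (equivalently, the centralizer of $\rho_3(\mathrm{SU}(2))$ in $\mathfrak{spin}(7)$ beyond $\mathfrak{su}(4)$) requires care, since Schur's lemma applied inside $\mathfrak{su}(4)$ alone only yields a finite centralizer.
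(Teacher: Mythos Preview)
Your proposal is correct and follows essentially the same approach as the paper: both exploit the $\mathrm{Ad}(\rho_3(\mathrm{SU}(2)))$-invariance of the stabilizer subspace to reduce to checking each irreducible summand of $\mathfrak{spin}(7)$ via evaluation at the base point $p_0$, verifying inclusion for $W_1^{\mathfrak{spin}(7)}$ and $W_3^{\mathfrak{su}(4)}$ and exhibiting explicit elements with nonzero normal component for $W_5^{\mathfrak{su}(4)}$, $W_7^{\mathfrak{su}(4)}$, and $W_5^{\mathfrak{spin}(7)}$. The paper's computation for $W_1^{\mathfrak{spin}(7)}$ is slightly more concrete---it identifies $H_0\cdot p_0$ directly with $(\rho_3)_*(E_3)\cdot p_0$---but the logic is identical to what you outline.
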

\begin{proof}
Since the left hand side is ${\rm SU}(2)$-invariant, 
it is a direct sum of $W_k^{\mathfrak{spin}(7)}$'s or $W_l^{\mathfrak{su}(4)}$'s. 
Thus we only have to see 
whether 
an element in $W_k^{\mathfrak{spin}(7)}$ or $W_l^{\mathfrak{su}(4)}$ 
is contained in the left hand side. 

By definition, $W_3^{\mathfrak{su}(4)}$ is contained in the left hand side. 
Via the identification of $\mathbb{C}^4 \cong \mathbb{R}^8$ given by (\ref{eq:identification R8 C4}), 
we see that 
$$
(\rho_3 (g^{-1}) H_0 \rho_3 (g)) \cdot p_{0}
=
H_0 \cdot p_{0}
=
\frac{1}{\sqrt{2}} {}^t\! (0,0,0,1,1,0,0,0)
=
(\rho_3)_* (E_3) \cdot p_{0}
$$
for any $g \in {\rm SU}(2).$
Hence 
$W_1^{\mathfrak{spin}(7)}$ is contained in the left hand side.

For 
$X = 
\left( 
\begin{array}{cccc}
0    & i    &0    & 0\\
i     & 0   & 0   & 0 \\
0    & 0   &0    & -i\\
0    & 0   & -i  &0 
\end{array} 
\right) 
\in 
W_5^{\mathfrak{su}(4)}, 
Y =
\left( 
\begin{array}{cccc}
0   & 0  & 1  & 0 \\
0   & 0  & 0  & -1\\
-1 & 0  & 0  & 0 \\
0   & 1  &0   & 0
\end{array} 
\right) 
\in 
W_7^{\mathfrak{su}(4)}, 
$ and 
$
Z =  
\left( 
\begin{array}{cccc}
0   & 0  & -1 & 0 \\
0   & 0  & 0  & 1\\
1   & 0  & 0  & 0 \\
0   & -1 &0   & 0
\end{array} 
\right) 
\oplus 
\left( 
\begin{array}{cccc}
0   & 0  & 1  & 0 \\
0   & 0  & 0  & -1\\
-1 & 0  & 0  & 0 \\
0   & 1  &0   & 0
\end{array} 
\right) 
\in 
W_5^{\mathfrak{spin}(7)}
$, 
we have 
$
\langle X \cdot p_0, (\eta_1)_{p_0} \rangle 
= 1, 
\langle Y \cdot p_0, (\eta_1)_{p_0} \rangle 
= 1$ 
and 
$\langle Z \cdot p_0, (\eta_4)_{p_0} \rangle 
= 2/\sqrt{7}$. 
Note that 
via the identification of $\mathbb{C}^4 \cong \mathbb{R}^8$ given by (\ref{eq:identification R8 C4}) 
$$
p_{0}
=\frac{1}{\sqrt{2}} {}^t\! (0,0,1,0,0,1,0,0) 
\qquad \mbox{and} \qquad 
(\eta_4)_{p_0}
=\frac{1}{\sqrt{42}} {}^t\! (-2 \sqrt{3},0,0,3,-3,0,0,2 \sqrt{3}).
$$
Hence 
$W_5^{\mathfrak{su}(4)}$, 
$W_7^{\mathfrak{su}(4)}$ and 
$W_5^{\mathfrak{spin}(7)}$
are not contained in the left hand side.
\end{proof}

Hence 
by Lemma \ref{lem:stab A3}, we see that 
the space of trivial deformations of $A_3$ is 
isomorphic to 
\begin{align*}
\mathfrak{spin}(7)/ (W_1^{\mathfrak{spin}(7)}
\oplus W_3^{\mathfrak{su}(4)})
\cong 
W_5^{\mathfrak{spin}(7)}
\oplus 
W_5^{\mathfrak{su}(4)}
\oplus 
W_7^{\mathfrak{su}(4)},
\end{align*}
which is a 17-dimensional subspace of 
the 34-dimensional space $\ker D$.

\begin{remark}
Use the notation in (\ref{eq:def of j}), 
Lemmas \ref{lem:diff mat coefficient}, \ref{lem:irr decomp su(4)} and \ref{lem:decomp spin(7) su(4)}.
By tedious calculations, 
we can describe 
elements of $\ker D$ 
given by $\mathfrak{spin}(7)/ (W_1^{\mathfrak{spin}(7)}
\oplus W_3^{\mathfrak{su}(4)})
\cong 
W_5^{\mathfrak{spin}(7)}
\oplus 
W_5^{\mathfrak{su}(4)}
\oplus 
W_7^{\mathfrak{su}(4)}
$.  
Elements in $\ker D$ are of the form (\ref{eq: elements of ker D}). 
In the following table, 
each space in the left-hand side  
corresponds to the 
elements in $\ker D$ given 
by the right-hand side. \\

\begin{tabular}{|c||c|c|c|c|} 
\hline
                                     & $\ker D$ \\ \hline \hline 
$W_7^{\mathfrak{su}(4)}$   & $u_1 \in (1-j)V_6, \quad u_2=u_3=0.$ \\ \hline
$W_5^{\mathfrak{su}(4)}$   & $u_1=0, \quad u_2 \in (1-j) V_4, \quad 
                                         u_3 = (2 \sqrt{6}/3) \cdot u_2^*$. \\ \hline
$W_5^{\mathfrak{spin}(7)}$ & $u_1=0,  \quad u_2 \in (1+j) V_4, \quad 
                                          u_3 = (2 \sqrt{6}/3) \cdot u_2^*$. \\ \hline
\end{tabular}

\end{remark}

\section*{Appendix}
\appendix

\section{Representations of ${\rm SU}(2)$} \label{sec:R irr rep}

In this section, we summarize the results 
about representations of ${\rm SU}(2)$. 
First, we recall the $\mathbb{C}$-irreducible representations of ${\rm SU}(2)$.

Let 
$V_{n}$ be a $\mathbb{C}$-vector space 
of all complex homogeneous polynomials 
with two variables $z_{1}, z_{2}$ of degree $n$, where $n \geq 0$, 
and 
define the representation 
$\rho_{n} : {\rm SU}(2) \rightarrow {\rm GL}(V_{n})$ as 
\begin{align*} 
\left(\rho_{n} 
\left( 
\begin{array}{cc}
a & -\overline{b} \\
b &   \overline{a} \\
\end{array} 
\right) f
\right) (z_{1}, z_{2}) 
=
f
\left( 
(z_{1}, z_{2}) 
\left(
\begin{array}{cc}
a & -\overline{b} \\
b &   \overline{a} \\
\end{array} 
\right) 
\right).  
\end{align*} 
Define the Hermitian inner product $\langle \ , \ \rangle$ 
of $V_{n}$ such that 
\begin{align*} 
\left \{
v^{(n)}_{k}
=
z_{1}^{n-k} z_{2}^{k} / \sqrt{k ! (n - k)!}
\right \}_{0 \leq k \leq n}  
\end{align*} 
is a unitary basis of $V_{n}$. 
Denoting by $\widehat{{\rm SU}(2)}$  
the set of all equivalence classes of finite dimensional irreducible representations of ${\rm SU}(2)$, 
we know that 
$\widehat{{\rm SU}(2)} = \{ (V_{n} ,\rho_{n}) ; n \geq 0 \}$. 
Then 
every $\mathbb{C}$-valued continuous function 
on ${\rm SU}(2)$ is 
uniformly approximated by 
the $\mathbb{C}$-linear combination of  
\begin{align*}
\left \{ \langle \rho_{n} (\cdot) v^{(n)}_{i}, v^{(n)}_{j} \rangle ; 
n \geq 0, \ 0 \leq i, j \leq n
\right \}, 
\end{align*} 
which are mutually orthogonal with respect to the $L^{2}$ inner product. \\

Next, we review the $\mathbb{R}$-irreducible representations of ${\rm SU}(2)$ 
by \cite[Section 2]{Mashimo minimal}. 
A more general reference of this topic is \cite{Oni}.

Define the map $j: V_n \rightarrow V_n$ by
\begin{align} \label{eq:def of j}
(j f) (z_1, z_2) = \overline{f(-\bar{z}_2, \bar{z}_1)}, 
\end{align}
which is a $\mathbb{C}$-antilinear ${\rm SU}(2)$-equivariant map 
satisfying $j^2 = (-1)^n$.
This map $j$ is called a {\bf structure map} (\cite[Section 2]{Mashimo minimal}).

When $n$ is even, we have $j^2=1$ 
and $V_n$ decomposes into two mutually equivalent 
real irreducible representations: 
$V_n = (1+j) V_n \oplus (1-j) V_n.$ 
When $n$ is odd,
 $V_n$ is also irreducible as a real representation. 

All of the real irreducible representations are given in this way, 
and hence their dimensions are given by $4m$ or $2n+1$ for $m,n \geq 0$.
Denote by $W_k$, where $k \in 4 \mathbb{Z} \cup (2 \mathbb{Z} +1)$, 
the $k$-dimensional $\mathbb{R}$-irreducible representation of ${\rm SU}(2)$. 
It follows that 
\begin{align} \label{eq:decomp C R}
V_{2m+1} = W_{4m+4}, \qquad V_{2m} = W_{2m+1} \oplus W_{2m+1}
\qquad
\mbox{ for } m \geq 0.
\end{align}

The characters $\chi_{V_n}$ of $V_n$ 
are determined by the values on 
the maximal torus 
$\left \{ h_a = 
\left( 
\begin{array}{cc}
a & 0 \\
0 & a^{-1}
\end{array} 
\right);
a \in \mathbb{C}, |a| = 1 \right \}$ of ${\rm SU}(2)$. 
It is well-known that 
\begin{align*}
\chi_{V_n} (h_a)
=
\sum_{k=0}^{n} a^{2k-n} = \frac{a^{n+1} - a^{-(n+1)} }{a-a^{-1}}.
\end{align*}
By (\ref{eq:decomp C R}), 
the characters 
$\chi_{W_k}$
of $W_k$ on the maximal torus 
are given by 
\begin{align}
\begin{split}
\chi_{W_{4m+4}}(h_a) 
&= 
2 \chi_{V_{2m+1}} (h_a)
=
2 \sum_{k=0}^{2m+1} a^{2k-(2m+1)} = \frac{2 (a^{2m+2} - a^{-(2m+2)})}{a-a^{-1}}, \\
\chi_{W_{2m+1}}(h_a) 
&= 
\chi_{V_{2m}} (h_a)
=
\sum_{k=0}^{2m} a^{2k-2m} = \frac{a^{2m+1} - a^{-(2m+1)} }{a-a^{-1}}.
\label{eq:character R irr rep}
\end{split}
\end{align}

Finally, we summarize technical lemmas. 

\begin{lemma}[{\cite[Lemma 6.9]{Kdeform}}]
\label{lem:diff mat coefficient}
For $u = \sum_{l = 0}^{n} C_{l} v^{(n)}_{l} \in V_{n}$, 
set  
\begin{align*}
u^{*} = j u = \sum_{l = 0}^{n} (-1)^{n - l} \overline{C}_{n - l} v^{(n)}_{l} \in V_{n}. 
\end{align*}
Then 
for any $n \geq 0, 0 \leq k \leq n,  u \in V_{n}$, we have 
\begin{align} \label{eq:conjugate}
\overline{ \langle \rho_{n} (\cdot) v^{(n)}_{k}, u \rangle} 
&=
(-1)^{k}
\langle \rho_{n} (\cdot) v^{(n)}_{n - k}, u^{*} \rangle.
\end{align}
Let $\{ E_1, E_2, E_3 \}$ be the basis of 
the Lie algebra $\mathfrak{su}(2)$ of ${\rm SU}(2)$ given by (\ref{E1E2E3}). 
Identify $E_i \in \mathfrak{su}(2)$  
with 
the left invariant differential operator on ${\rm SU}(2)$. 
Then 
\begin{align} \label{eq:diff by Ei}
\begin{split}
(-i E_{1} + E_{2}) 
\langle \rho_{n} (\cdot) v^{(n)}_{k}, u \rangle
&=
\left\{
\begin{array}{ll}
2i \sqrt{(k + 1)(n - k)} \langle \rho_{n} (\cdot) v^{(n)}_{k + 1}, u \rangle, & (k < n) \\
0,                                                                                                      & (k = n)
\end{array}
\right.\\
(i E_{1} + E_{2}) 
\langle \rho_{n} (\cdot) v^{(n)}_{k}, u \rangle
&=
\left\{
\begin{array}{ll}
2i \sqrt{k (n - k + 1)} \langle \rho_{n} (\cdot) v^{(n)}_{k - 1}, u \rangle, & (k > 0) \\
0,                                                                                                      & (k = 0)
\end{array}
\right.\\
i E_{3} \langle \rho_{n} (\cdot) v^{(n)}_{k}, u \rangle
&= (-n + 2k) \langle \rho_{n} (\cdot) v^{(n)}_{k}, u \rangle. \\
\end{split}
\end{align}
\end{lemma}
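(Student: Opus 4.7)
The plan is to prove the two parts of the lemma by direct computation, after setting up the necessary structural facts about the structure map $j$ and the infinitesimal action of ${\rm SU}(2)$ on $V_n$.

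For equation \eqref{eq:conjugate}, I would first evaluate $j$ on the orthonormal basis $\{v_k^{(n)}\}$ by substituting into the defining formula $(jf)(z_1, z_2) = \overline{f(-\bar z_2, \bar z_1)}$. A short computation with $v_k^{(n)} = z_1^{n-k} z_2^k / \sqrt{k!(n-k)!}$ expresses $j v_k^{(n)}$ as a scalar multiple of $v_{n-k}^{(n)}$, and spreading this across a general $u = \sum_l C_l v_l^{(n)}$ by antilinearity recovers the stated formula for $u^* = ju$. Next, I would verify that $j$ is antiunitary, $\langle ja, jb \rangle = \overline{\langle a, b\rangle}$, which reduces to checking on the basis and falls out of the sign computation just performed. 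The ${\rm SU}(2)$-equivariance $j \circ \rho_n(g) = \rho_n(g) \circ j$ is built into the definition, since the map $(z_1, z_2) \mapsto (-\bar z_2, \bar z_1)$ intertwines the natural right action of ${\rm SU}(2)$ on $\mathbb{C}^2$. Combining these,
\[
\overline{\langle \rho_n(g) v_k^{(n)}, u\rangle} = \langle j \rho_n(g) v_k^{(n)}, ju\rangle = \langle \rho_n(g)\, j v_k^{(n)}, u^*\rangle,
\]
and inserting the explicit formula for $j v_k^{(n)}$ produces the claimed identity.

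For the derivative formulas \eqref{eq:diff by Ei}, the key observation is that for a left-invariant vector field $X$ on ${\rm SU}(2)$ and a matrix coefficient $f(g) = \langle \rho_n(g) v, u\rangle$, differentiation under the inner product gives $(X f)(g) = \langle \rho_n(g)\, (\rho_n)_*(X) v, u\rangle$. So the task reduces to computing $(\rho_n)_*(E_i) v_k^{(n)}$ for $i = 1, 2, 3$. From the defining formula $(\rho_n(g) f)(z) = f(zg)$ with $z = (z_1, z_2)$, the infinitesimal action of $X \in \mathfrak{su}(2)$ becomes the first-order differential operator $((\rho_n)_*(X) f)(z) = df_z(zX)$. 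Applying this to $v_k^{(n)}$ with the explicit matrices $E_1, E_2, E_3$ from \eqref{E1E2E3} yields $(\rho_n)_*(E_3) v_k^{(n)} = i(n - 2k)\, v_k^{(n)}$, which gives the $iE_3$ formula at once, and for $E_1, E_2$ produces linear combinations of $v_{k-1}^{(n)}$ and $v_{k+1}^{(n)}$ with coefficients involving $\sqrt{(k+1)(n-k)}$ and $\sqrt{k(n-k+1)}$ after renormalizing $z_1^{a} z_2^{b}$ back into the unit basis. Forming the standard weight-raising and weight-lowering combinations $-iE_1 + E_2$ and $iE_1 + E_2$ of $\mathfrak{sl}(2, \mathbb{C})$ kills one of the two terms in each case by cancellation, leaving precisely the stated formulas; the boundary cases $k = n$ and $k = 0$ correspond to $v_{n+1}^{(n)} = 0$ and $v_{-1}^{(n)} = 0$, respectively.

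The computation is essentially routine ${\rm SU}(2)$ representation theory, so no substantial difficulty arises. The most delicate step is bookkeeping the normalization factors $\sqrt{k!(n-k)!}$ when computing $(\rho_n)_*(E_i) v_k^{(n)}$, since they are responsible for the square-root coefficients appearing in the final formulas. A secondary bookkeeping issue is tracking the parity-dependent signs that arise in $j v_k^{(n)}$ and making sure they propagate consistently through the antiunitarity argument so as to match the definition of $u^*$ given in the statement.
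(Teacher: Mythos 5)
Your proposal is correct: the paper itself gives no proof of this lemma (it is quoted from \cite[Lemma 6.9]{Kdeform}), and your argument is the standard one — compute $j v^{(n)}_k = (-1)^{n-k} v^{(n)}_{n-k}$, use antiunitarity and equivariance of $j$ for (\ref{eq:conjugate}), and reduce (\ref{eq:diff by Ei}) to $(\rho_n)_*(X)f(z)=df_z(zX)$, where one finds $(\rho_n)_*(-iE_1+E_2)=2iz_2\partial_{z_1}$, $(\rho_n)_*(iE_1+E_2)=2iz_1\partial_{z_2}$ and $(\rho_n)_*(E_3)v^{(n)}_k=i(n-2k)v^{(n)}_k$, which give exactly the stated coefficients. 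The one bookkeeping point you flagged is real: with the definition (\ref{eq:def of j}) one gets $ju=\sum_l(-1)^l\overline{C}_{n-l}v^{(n)}_l=(-1)^n u^*$, so the identification $u^*=ju$ and the naive sign $(-1)^{n-k}$ from the antiunitarity argument each differ by $(-1)^n$ from the statement; the two discrepancies cancel, so (\ref{eq:conjugate}) holds as written with $u^*$ defined by the explicit sum (and in the application only $n=4,6$ occur, where there is no discrepancy at all).
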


Since the Haar measure 
is ${\rm SU}(2)$-invariant, 
we have the following.
\begin{lemma} \label{lem:integ by parts}
For any $X \in \mathfrak{su}(2)$ and a smooth function $f$ on ${\rm SU}(2)$, we have 
$$
\int_{{\rm SU}(2)} X(f) (g) dg = 0.
$$
\end{lemma}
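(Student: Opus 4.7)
The plan is to recognize this as the standard statement that the integral of a left-invariant vector field applied to a function, over a compact Lie group, vanishes. This follows from right-invariance of the Haar measure, since ${\rm SU}(2)$ is compact and hence unimodular.

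First, I would unwind the definition: the element $X \in \mathfrak{su}(2)$ is identified in the paper with a left-invariant vector field on ${\rm SU}(2)$, so
\[
X(f)(g) \;=\; \frac{d}{dt}\bigg|_{t=0} f\bigl(g \exp(tX)\bigr).
\]
Since $f$ is smooth and ${\rm SU}(2)$ is compact, the integrand and all its time derivatives are uniformly bounded, so I may differentiate under the integral sign:
\[
\int_{{\rm SU}(2)} X(f)(g)\, dg
\;=\;
\frac{d}{dt}\bigg|_{t=0} \int_{{\rm SU}(2)} f\bigl(g \exp(tX)\bigr)\, dg.
\]

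Next I would invoke the right-invariance of the Haar measure. On the compact (hence unimodular) group ${\rm SU}(2)$, the normalized bi-invariant Haar measure $dg$ satisfies $\int h(gh_0)\,dg = \int h(g)\,dg$ for all $h_0 \in {\rm SU}(2)$ and all integrable $h$. Applying this with $h_0 = \exp(tX)$ and $h=f$ gives
\[
\int_{{\rm SU}(2)} f\bigl(g \exp(tX)\bigr)\, dg
\;=\;
\int_{{\rm SU}(2)} f(g)\, dg,
\]
which is constant in $t$. Differentiating at $t=0$ yields $0$, completing the argument.

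There is no serious obstacle here; the only point worth flagging is the use of right-invariance of the Haar measure for a \emph{left}-invariant vector field, which relies on ${\rm SU}(2)$ being compact (so left and right Haar measures coincide). An equivalent differential-geometric way to package the same computation, if preferred, is to write $X(f)\,{\rm vol} = \mathcal{L}_X(f\,{\rm vol})$, since the bi-invariant volume form is preserved by $X$, and then to invoke Cartan's formula together with Stokes' theorem on the closed manifold ${\rm SU}(2)$; this gives an alternative presentation of the same one-line proof.
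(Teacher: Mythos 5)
Your proof is correct and follows essentially the same route as the paper, which simply asserts the lemma as a consequence of the invariance of the Haar measure; you have just spelled out the standard details (differentiation under the integral and right-invariance on the compact, hence unimodular, group ${\rm SU}(2)$).
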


\section{Clebsch-Gordan decomposition}

Use the notation in Section \ref{sec:R irr rep}. 
In the computation in Section \ref{sec:computation S7}, 
we need the irreducible decomposition of $V_m \otimes V_n$ for $m,n \geq 0$. 
This is well-known as the Clebsch-Gordan decomposition: 
$$
V_m \otimes V_n = \bigoplus_{h=0}^{{\rm min} \{m,n \}} V_{m+n-2h}.
$$
Identify 
$V_m \otimes V_n$ with 
the vector subspace of polynomials in $(z_1, z_2, w_1, w_2)$ 
consisting of 
homogeneous polynomials of degree $m$ in $(z_1, z_2)$ and of
degree $n$ in $(w_1, w_2)$. 
Then the inclusion 
$V_{m+n-2h} \rightarrow V_m \otimes V_n$ is explicitly given as follows.

\begin{lemma}[{\cite[p.46]{Procesi}, \cite[Section 2.1.2]{Al Nuwairan}}] \label{lem:Clebsch-Gordan}
For $0 \leq h \leq {\rm min} \{m,n \}$, define the map
$$
\alpha_{m,n,h}: V_{m+n-2h} \rightarrow V_m \otimes V_n
$$
by
$$
\alpha_{m,n,h} (f(z_1,z_2)) = \sqrt{c_{m,n,h}} (z_1 w_2 -z_2 w_1)^h 
\left( w_1\frac{\partial}{\partial z_1} + w_2 \frac{\partial}{\partial z_2} \right)^{n-h} (f(z_1,z_2)),
$$
where 
$c_{m,n,h} > 0$ is given in \cite[Section 2.2.2]{Al Nuwairan}.
Then the map $\alpha_{m,n,h}$ is ${\rm SU}(2)$ equivariant and isometric. 
\end{lemma}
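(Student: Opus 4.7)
The plan is to split the lemma into its two conclusions --- $SU(2)$-equivariance and the isometric property --- and handle them separately by factoring $\alpha_{m,n,h}$ as the composition
$$
V_{m+n-2h} \xrightarrow{\,P\,} V_{m-h} \otimes V_{n-h} \xrightarrow{\,M\,} V_m \otimes V_n,
$$
where $P = (w_1 \partial_{z_1} + w_2 \partial_{z_2})^{n-h}$ is the polarization operator and $M$ denotes multiplication by $(z_1 w_2 - z_2 w_1)^h$. Each factor is in fact $SL(2,\mathbb{C})$-equivariant, and the isometry statement then reduces to Schur's lemma once the constant $c_{m,n,h}$ is chosen correctly.

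For the equivariance of $M$, the polynomial $z_1 w_2 - z_2 w_1 = \det\binom{z}{w}$ transforms under the simultaneous right action $(z,w) \mapsto (zg, wg)$ by multiplication by $\det g = 1$ for $g \in SL(2)$, so multiplication by any of its powers intertwines the $V_a \otimes V_b$ actions with those on $V_{a+h} \otimes V_{b+h}$. For the equivariance of $P$, the chain rule applied to the map $z \mapsto zg$ gives
$$
(w_1 \partial_{z_1} + w_2 \partial_{z_2}) f(zg) = \bigl((wg)\cdot \partial_y f\bigr)(zg),
$$
and iterating $n-h$ times yields
$$
\bigl[(w\cdot\partial_z)^{n-h}(\rho_{m+n-2h}(g) f)\bigr](z,w) = \bigl[(w\cdot\partial_z)^{n-h} f\bigr](zg, wg),
$$
which is exactly the simultaneous $\rho_{m-h}(g)\otimes\rho_{n-h}(g)$-transform of $Pf$. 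Composing $M$ with $P$ then yields the $SU(2)$-equivariance of $\alpha_{m,n,h}$.

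For the isometry property, the Clebsch-Gordan decomposition $V_m\otimes V_n = \bigoplus_{k=0}^{\min(m,n)} V_{m+n-2k}$ exhibits $V_{m+n-2h}$ with multiplicity one inside $V_m\otimes V_n$, so by Schur's lemma every nonzero $SU(2)$-equivariant map from the $\mathbb{C}$-irreducible $V_{m+n-2h}$ into $V_m\otimes V_n$ is injective onto the corresponding summand and is an isometry up to a single positive real scalar. That $\alpha_{m,n,h}$ is nonzero is easy to verify on the highest-weight vector $z_1^{m+n-2h}$: the polarization sends it to a nonzero multiple of $z_1^{m-h}w_1^{n-h}$, which $M$ then takes to a nonzero element of $V_m\otimes V_n$. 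The constant $c_{m,n,h}>0$ of Al Nuwairan is by definition chosen so that the resulting proportionality factor equals $1$, making $\alpha_{m,n,h}$ norm-preserving. The main delicate step is therefore the iterated chain-rule computation establishing the equivariance of $P$ (together with pinning down the value of $c_{m,n,h}$, which is a single evaluation on the highest-weight vector); the rest is routine once equivariance is in place.
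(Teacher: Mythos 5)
The paper does not prove this lemma at all---it is quoted verbatim from the cited references (Procesi and Al Nuwairan)---so there is no in-paper argument to compare against. Your proof is correct and is essentially the standard one: the factorization through polarization and multiplication by the invariant $z_1w_2-z_2w_1$ gives equivariance, and multiplicity one in the Clebsch--Gordan decomposition plus Schur's lemma (via $\alpha^*\alpha=\lambda\,\mathrm{id}$ with $\lambda>0$) reduces the isometry claim to the normalization of the constant, which is exactly what $c_{m,n,h}$ in the cited reference is defined to achieve. The only step you defer rather than verify is that Al Nuwairan's explicit $c_{m,n,h}$ equals the $\lambda^{-1}$ produced by your Schur argument; since the lemma itself points to the reference for that constant, this deferral is consistent with the statement being proved.
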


Denote by $\rho_{m,n}$ 
the induced representation of ${\rm SU}(2)$ on $V_m \otimes V_n$. 
Since we know that
$$
\langle \rho_m (g) u_m, u'_m \rangle  \langle \rho_n (g) u_n, u'_n \rangle
= 
\langle \rho_{m,n} (g) (u_m \otimes u_n), u'_m \otimes u'_n \rangle
$$
for $u_m, u'_m \in V_m, u_n, u'_n \in V_n$ and $g \in {\rm SU}(2)$, 
we have the following 
by Lemma \ref{lem:Clebsch-Gordan} 
and the Schur orthogonality relations.

\begin{lemma} \label{lem:Schur orthog sym2}
Set $r=m+n-2h$.Then we have 
\begin{align*}
&\int_{{\rm SU}(2)} 
\langle \rho_m (g) u_m, u'_m \rangle  \langle \rho_n (g) u_n, u'_n \rangle 
\overline{\langle \rho_r (g) u_r, u'_r \rangle} dg \\
=&
\frac{1}{r+1} 
\langle u_m \otimes u_n, \alpha_{m,n,h} (u_r) \rangle
\overline{
\langle u'_m \otimes u'_n, \alpha_{m,n,h} (u'_r) \rangle
}
\end{align*}
for $u_j, u'_j \in V_j$.
\end{lemma}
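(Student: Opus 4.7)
The plan is to combine the tensor identity stated immediately before the lemma with the Schur orthogonality relations for ${\rm SU}(2)$, using Lemma \ref{lem:Clebsch-Gordan} to realize $\rho_r$ as an isometric subrepresentation of $\rho_{m,n}$ inside $V_m \otimes V_n$.

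First I would use
$$
\langle \rho_m(g) u_m, u'_m \rangle \langle \rho_n(g) u_n, u'_n \rangle
= \langle \rho_{m,n}(g)(u_m \otimes u_n), u'_m \otimes u'_n \rangle
$$
to rewrite the integrand on the left-hand side as a product of a matrix coefficient of $\rho_{m,n}$ and the conjugate of a matrix coefficient of $\rho_r$. Since $\alpha_{m,n,h}$ is ${\rm SU}(2)$-equivariant and isometric (Lemma \ref{lem:Clebsch-Gordan}), we may further write
$$
\langle \rho_r(g) u_r, u'_r \rangle
= \langle \rho_{m,n}(g)\, \alpha_{m,n,h}(u_r),\, \alpha_{m,n,h}(u'_r) \rangle,
$$
so the integrand becomes a product of two matrix coefficients of the single representation $\rho_{m,n}$.

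Next I would invoke the Schur orthogonality relations. The Clebsch-Gordan decomposition
$V_m \otimes V_n = \bigoplus_{h'=0}^{\min\{m,n\}} \alpha_{m,n,h'}(V_{m+n-2h'})$
is an orthogonal sum of pairwise inequivalent ${\rm SU}(2)$-irreducibles, since the summands have distinct dimensions $m+n-2h'+1$. Schur orthogonality then forces the integral to vanish against any component of $u_m \otimes u_n$ lying in $\alpha_{m,n,h'}(V_{m+n-2h'})$ with $h' \neq h$, and the surviving contribution from the $V_r$-summand equals $1/\dim V_r = 1/(r+1)$ times a product of two inner products computed inside that copy of $V_r$. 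Because $\alpha_{m,n,h}$ is isometric and $\alpha_{m,n,h}(u_r)$ is orthogonal to every other Clebsch-Gordan component, we have
$$
\langle P_h(u_m \otimes u_n),\, \alpha_{m,n,h}(u_r) \rangle
= \langle u_m \otimes u_n,\, \alpha_{m,n,h}(u_r) \rangle,
$$
where $P_h$ denotes the orthogonal projection onto $\alpha_{m,n,h}(V_r)$, and similarly for the primed vectors. Assembling these identifications yields the stated formula.

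No step is particularly difficult; the argument is essentially bookkeeping once Lemma \ref{lem:Clebsch-Gordan} is available. The only points requiring attention are the correct normalization $1/(r+1)$ coming from Schur orthogonality on the $(r+1)$-dimensional irreducible, and the observation that pairing against $\alpha_{m,n,h}(u_r)$ automatically extracts the $V_r$-isotypic component of $u_m \otimes u_n$, so no explicit projection formula needs to be written out.
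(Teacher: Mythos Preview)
Your proposal is correct and follows exactly the approach indicated in the paper, which simply states that the lemma follows from Lemma \ref{lem:Clebsch-Gordan} and the Schur orthogonality relations; your write-up is just a careful unpacking of that sentence, with the tensor identity, the isometric equivariance of $\alpha_{m,n,h}$, and the $1/(r+1)$ normalization all handled correctly.
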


The next lemma is very useful for the computation in Section \ref{sec:computation S7}. 

\begin{lemma} \label{lem:0 integ abc}
\begin{align*}
\int_{{\rm SU}(2)} 
\langle \rho_m (g) v^{(m)}_a, u'_m \rangle  \langle \rho_n (g) v^{(n)}_b, u'_n \rangle 
\overline{\langle \rho_r (g) v^{(r)}_c, u'_r \rangle} dg
= 0
\end{align*}
for any $u'_m \in V_m, u'_n \in V_n, u'_r \in V_r$ if
$$
a+b \neq c+h \left( = c+ \frac{m+n-r}{2} \right). 
$$
\end{lemma}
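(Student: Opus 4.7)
The plan is to exploit the weight decomposition under the maximal torus $T = \{h_s = {\rm diag}(s, s^{-1}) : s \in \mathbb{C}, |s|=1\} \subset {\rm SU}(2)$. Directly from the definition of $\rho_n$ in Section \ref{sec:R irr rep}, we have
$$(\rho_n(h_s) v^{(n)}_k)(z_1, z_2) = v^{(n)}_k(s z_1, s^{-1} z_2) = s^{n-2k} v^{(n)}_k(z_1, z_2),$$
so each $v^{(n)}_k$ is a weight vector with weight $n-2k$. (This is consistent with the third identity in (\ref{eq:diff by Ei}), since $E_3$ generates $T$.)

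Consequently, for any $u \in V_n$, the matrix coefficient $\phi(g) := \langle \rho_n(g) v^{(n)}_k, u \rangle$ transforms under right translation by $h_s$ as $\phi(g h_s) = \langle \rho_n(g) \rho_n(h_s) v^{(n)}_k, u \rangle = s^{n-2k} \phi(g)$. Taking conjugates and using $|s|=1$, we also get that $\overline{\langle \rho_r(g h_s) v^{(r)}_c, u'_r \rangle} = s^{-(r-2c)} \overline{\langle \rho_r(g) v^{(r)}_c, u'_r \rangle}$. Denote by $I$ the integral on the left-hand side of Lemma \ref{lem:0 integ abc}. Substituting $g \mapsto g h_s$ and invoking right-invariance of the Haar measure gives
$$I = s^{(m-2a) + (n-2b) - (r-2c)} I = s^{(m+n-r) - 2(a+b-c)} I = s^{2(h + c - a - b)} I$$
for every $s \in \mathbb{C}$ with $|s|=1$.

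If $a+b \neq c+h$, then $h + c - a - b \neq 0$, and we may choose $s$ so that $s^{2(h+c-a-b)} \neq 1$, which forces $I=0$. No serious obstacle is expected: the argument is a one-line weight computation followed by the standard invariance-of-Haar-measure trick. The only care needed is to verify the weight formula $\rho_n(h_s) v^{(n)}_k = s^{n-2k} v^{(n)}_k$ from the conventions of Section \ref{sec:R irr rep}, which is immediate from the explicit formula for $\rho_n$.
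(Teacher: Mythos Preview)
Your proof is correct and takes a genuinely different route from the paper. The paper argues via the explicit Clebsch--Gordan embedding: it expands $\alpha_{m,n,h}(v^{(r)}_c)$ using the formula in Lemma~\ref{lem:Clebsch-Gordan} and checks term by term that the image lies in ${\rm span}\{v^{(m)}_d \otimes v^{(n)}_e : d+e = c+h\}$, so that by Lemma~\ref{lem:Schur orthog sym2} the inner product $\langle v^{(m)}_a \otimes v^{(n)}_b, \alpha_{m,n,h}(v^{(r)}_c)\rangle$ vanishes when $a+b \neq c+h$. Your argument instead reads off the torus weight of each factor and uses right-invariance of the Haar measure directly, avoiding both Schur orthogonality and the explicit $\alpha_{m,n,h}$. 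Your approach is shorter and more conceptual, and as a bonus it handles the degenerate cases (e.g.\ $m+n-r$ odd, or $r$ outside the Clebsch--Gordan range) uniformly without separate comment. The paper's approach, on the other hand, is a natural warm-up for Lemma~\ref{lem:explicit alpha}, where the same expansion of $\alpha_{4,4,1}(v^{(6)}_k)$ has to be carried out in full anyway; so within the paper's logic the explicit computation is not wasted effort.
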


\begin{proof}
We compute
\begin{align*}
&\left( \sqrt{r! (r-c)!}/ \sqrt{c_{m,n,h}} \right)
\alpha_{m,n,h} (v^{(r)}_c) \\
=&
(z_1 w_2-z_2 w_1)^h 
\left( w_1\frac{\partial}{\partial z_1} + w_2 \frac{\partial}{\partial z_2} \right)^{n-h} (z_1^{r-c} z_2^c)\\
=&
\sum_{i=0}^h \sum_{j=0}^{n-h} 
\left(\begin{array}{c} 
h\\
i 
\end{array}\right)
\left(\begin{array}{c} 
n-h\\
j
\end{array}\right)
(z_1 w_2)^i (- z_2 w_1)^{h-i} 
w_1^j w_2^{n-h-j} \left(\frac{\partial}{\partial z_1} \right )^j 
\left(\frac{\partial}{\partial z_2} \right )^{n-h-j} 
(z_1^{r-c} z_2^c)\\
\in&
{\rm span} \left \{v^{(m)}_{(h-i)+c-(n-h-j)} \otimes v^{(n)}_{i+(n-h-j)}; 0 \leq i \leq h, 0 \leq j \leq n-h 
\right \}\\
\subset& 
{\rm span} \left \{v^{(m)}_d \otimes v^{(n)}_e;  d + e = c+ h \right \}, 
\end{align*}
which gives the proof.
\end{proof}

In this paper, the case of 
$(m,n,h) = (4,4,1)$ or $(6,6,3)$ is important. 
Recall that  the character of 
the induced representation 
on the second symmetric power 
$S^2 (V_n)$ is given by 
$(\chi_{V_n} (g)^2 + \chi_{V_n} (g^2) )/2$. 
(For example, see \cite[Exercise 2.2]{Fulton Harris}.)
By computing the character
of $S^2 (V_4)$ and $S^2 (V_6)$, 
we see that 
$$
S^2 (V_4) = V_8 \oplus V_4 \oplus V_0, \qquad
S^2 (V_6) = V_{12} \oplus V_8 \oplus V_4 \oplus V_0.$$  
Thus we have
$\alpha_{4,4,1} (V_6) \subset \Lambda^2 V_4$,  
$\alpha_{6,6,3} (V_6) \subset \Lambda^2 V_6$
and we obtain the following.

\begin{lemma} \label{lem:0 integ 4,6}
Suppose that $m=4 \mbox{ or } 6$ 
and $u_m, \hat{u}_m, u'_m, \hat{u}'_m \in V_m$.
If $u_m = \hat{u}_m$ or $u'_m = \hat{u}'_m$, we have 
\begin{align*}
\int_{{\rm SU}(2)} 
\langle \rho_m (g) u_m, u'_m \rangle  
\langle \rho_m (g) \hat{u}_m, \hat{u}'_m \rangle 
\overline{\langle \rho_6 (g) v_6, v'_6 \rangle} dg
= 0
\end{align*}
for any $v_6, v'_6 \in V_6$.
\end{lemma}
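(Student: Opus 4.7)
The strategy is to apply the Schur orthogonality relation (Lemma \ref{lem:Schur orthog sym2}) to rewrite the integral as a product of two Hermitian inner products on $V_m \otimes V_m$, and then to exploit the symmetric/antisymmetric decomposition $V_m \otimes V_m = S^2(V_m) \oplus \Lambda^2(V_m)$ to force one of those factors to vanish.

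First, I would identify the relevant Clebsch--Gordan parameter. The copy of $V_6$ inside $V_m \otimes V_m$ corresponds to $r = 6 = 2m - 2h$, i.e.\ $h = m - 3$ (so $h = 1$ when $m = 4$ and $h = 3$ when $m = 6$). Applying Lemma \ref{lem:Schur orthog sym2} with $n = m$ and $r = 6$ yields
\begin{equation*}
\int_{{\rm SU}(2)} \langle \rho_m(g) u_m, u'_m \rangle \langle \rho_m(g) \hat{u}_m, \hat{u}'_m \rangle \overline{\langle \rho_6(g) v_6, v'_6 \rangle}\, dg = \tfrac{1}{7} \langle u_m \otimes \hat{u}_m, \alpha_{m,m,h}(v_6) \rangle \overline{\langle u'_m \otimes \hat{u}'_m, \alpha_{m,m,h}(v'_6) \rangle}.
\end{equation*}

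Next, I would use the character computations $S^2(V_4) = V_8 \oplus V_4 \oplus V_0$ and $S^2(V_6) = V_{12} \oplus V_8 \oplus V_4 \oplus V_0$ established just before the lemma. Neither decomposition contains $V_6$, and $V_6$ appears with multiplicity one in $V_m \otimes V_m$ for $m \in \{4,6\}$, so the unique isotypic copy of $V_6$ is forced into the complementary summand: $\alpha_{m,m,h}(V_6) \subset \Lambda^2 V_m$. This is exactly the observation highlighted in the paragraph preceding the lemma.

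Finally, if $u_m = \hat{u}_m$ then $u_m \otimes \hat{u}_m$ is symmetric, hence lies in $S^2(V_m)$; since $S^2(V_m) \perp \Lambda^2(V_m)$ for the induced Hermitian inner product, $\langle u_m \otimes \hat{u}_m, \alpha_{m,m,h}(v_6) \rangle = 0$, killing the first factor. The case $u'_m = \hat{u}'_m$ kills the second factor by the same argument. There is no real obstacle here: once Lemma \ref{lem:Schur orthog sym2} and the character computation for $S^2(V_m)$ are in hand, the proof reduces to a one-line symmetry argument, which is presumably why the paper lists this result as a ``technical lemma.''
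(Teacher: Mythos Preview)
Your argument is correct and is exactly the paper's approach: the paragraph immediately preceding the lemma records the character computation showing $\alpha_{4,4,1}(V_6)\subset\Lambda^2 V_4$ and $\alpha_{6,6,3}(V_6)\subset\Lambda^2 V_6$, and the lemma is then stated without a separate proof because it follows at once from Lemma~\ref{lem:Schur orthog sym2} together with the orthogonality of $S^2(V_m)$ and $\Lambda^2(V_m)$, just as you wrote.
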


The next lemma is straightforward and we omit the proof.
\begin{lemma} \label{lem:explicit alpha}
\begin{align*}
\alpha_{4,4,1} (v^{(6)}_0) &= \sqrt{c_{4,4,1}} \cdot 24 \sqrt{5} v^{(4)}_0 \wedge v^{(4)}_1, \\
\alpha_{4,4,1} (v^{(6)}_1) &= \sqrt{c_{4,4,1}} \cdot 24 \sqrt{5} v^{(4)}_0 \wedge v^{(4)}_2, \\
\alpha_{4,4,1} (v^{(6)}_2) &= \sqrt{c_{4,4,1}} \cdot 24 
                                    (\sqrt{3} v^{(4)}_0 \wedge v^{(4)}_3 + \sqrt{2} v^{(4)}_1 \wedge v^{(4)}_2), \\
\alpha_{4,4,1} (v^{(6)}_3) &= \sqrt{c_{4,4,1}} \cdot 24 
                                    (v^{(4)}_0 \wedge v^{(4)}_4 + 2 v^{(4)}_1 \wedge v^{(4)}_3), \\
\alpha_{4,4,1} (v^{(6)}_4) &= \sqrt{c_{4,4,1}} \cdot 24 
                                    (\sqrt{3} v^{(4)}_1 \wedge v^{(4)}_3 + \sqrt{2} v^{(4)}_2 \wedge v^{(4)}_3), \\
\alpha_{4,4,1} (v^{(6)}_5) &= \sqrt{c_{4,4,1}} \cdot 24 \sqrt{5} v^{(4)}_2 \wedge v^{(4)}_4, \\
\alpha_{4,4,1} (v^{(6)}_6) &= \sqrt{c_{4,4,1}} \cdot 24 \sqrt{5} v^{(4)}_3 \wedge v^{(4)}_4. 
\end{align*}
\end{lemma}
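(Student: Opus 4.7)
The plan is to compute $\alpha_{4,4,1}(v^{(6)}_k)$ directly from its definition in Lemma \ref{lem:Clebsch-Gordan}, for each $k = 0, 1, \ldots, 6$ in turn. With $(m,n,h) = (4,4,1)$ we have $n-h = 3$, so the task reduces to evaluating
\[
\alpha_{4,4,1}\bigl(v^{(6)}_k\bigr) = \sqrt{c_{4,4,1}}\,(z_1 w_2 - z_2 w_1)\left(w_1 \frac{\partial}{\partial z_1} + w_2 \frac{\partial}{\partial z_2}\right)^{3}\!\!\left(\frac{z_1^{6-k} z_2^{k}}{\sqrt{k!(6-k)!}}\right).
\]

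First I would expand the third-order differential operator using the binomial theorem as $\sum_{j=0}^{3}\binom{3}{j} w_1^{j} w_2^{3-j} \partial_{z_1}^{j} \partial_{z_2}^{3-j}$, apply it monomial by monomial to $z_1^{6-k} z_2^{k}$ (keeping track of which terms vanish when $j > 6-k$ or $j < 3-k$), and record the result as a linear combination of monomials $z_1^{a}z_2^{b} w_1^{c} w_2^{d}$ with $a+b = 3$ and $c+d = 3$. Next I would multiply by $(z_1 w_2 - z_2 w_1)$, which raises the bidegree to $(4,4)$ so that the output lies in the polynomial model of $V_4 \otimes V_4$. Finally, using the identities $z_1^{4-a}z_2^{a} = \sqrt{a!(4-a)!}\,v^{(4)}_{a}$ and analogously in the $w$-variables, I would rewrite each monomial in the tensor basis $v^{(4)}_{a} \otimes v^{(4)}_{b}$ and collect coefficients.

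The key organizing observation is that $\alpha_{4,4,1}(V_6) \subset \Lambda^{2}V_4$. This follows either from the character computation mentioned just above the statement (which gives $S^{2}V_4 = V_8 \oplus V_4 \oplus V_0$, hence $\Lambda^{2}V_4 = V_6 \oplus V_2$) or, more transparently, from the fact that the prefactor $(z_1 w_2 - z_2 w_1)$ is antisymmetric under swapping $(z_1,z_2) \leftrightarrow (w_1,w_2)$ while the operator $(w_1 \partial_{z_1} + w_2 \partial_{z_2})^{3}$ contributes only to the symmetric part in the sense that, after symmetrization in $(z,w)$, each resulting tensor is automatically antisymmetric. I will use this as a bookkeeping check: once the computation is done, every monomial must pair with its mirror image under $(z \leftrightarrow w)$ with opposite sign, so the result reorganizes into $v^{(4)}_{a}\wedge v^{(4)}_{b}$ terms. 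This halves the amount of data to track.

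The only real obstacle is careful arithmetic: making sure that the combinatorial factors $\binom{3}{j}(6-k)!/(6-k-j)! \cdot k!/(k-3+j)!$ from the differentiation, the normalization factor $1/\sqrt{k!(6-k)!}$ from $v^{(6)}_{k}$, and the normalization factors $\sqrt{a!(4-a)!}\sqrt{b!(4-b)!}$ from the output basis combine cleanly to produce the stated coefficients $24\sqrt{5}$, $24\sqrt{3}$, $24\sqrt{2}$, $24$, etc. Since the cases $k$ and $6-k$ are related by the $(z_1,w_1) \leftrightarrow (z_2,w_2)$ symmetry of $\alpha_{4,4,1}$ (which exchanges $v^{(4)}_{a} \leftrightarrow v^{(4)}_{4-a}$ up to sign and likewise for $v^{(6)}_{k}$), I only need to do the computation for $k = 0, 1, 2, 3$ explicitly and then derive the remaining formulas by this symmetry. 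This will be purely mechanical but the redundancy provides another consistency check against arithmetic errors, so I expect no substantive difficulty beyond the bookkeeping.
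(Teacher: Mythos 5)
Your proposal is precisely the direct computation the paper intends (the proof is omitted as ``straightforward''): expanding $(w_1\partial_{z_1}+w_2\partial_{z_2})^3$ binomially, multiplying by $z_1w_2-z_2w_1$, converting to the unitary bases, and your $k\leftrightarrow 6-k$ symmetry reduction and the $\alpha_{4,4,1}(V_6)\subset\Lambda^2V_4$ check are both valid. One heads-up: carrying out your own computation for $k=4$ (or applying your symmetry to the $k=2$ line) yields $\alpha_{4,4,1}(v^{(6)}_4)=\sqrt{c_{4,4,1}}\cdot 24(\sqrt{3}\,v^{(4)}_1\wedge v^{(4)}_4+\sqrt{2}\,v^{(4)}_2\wedge v^{(4)}_3)$, so the $v^{(4)}_1\wedge v^{(4)}_3$ in the stated formula for $v^{(6)}_4$ is a typo for $v^{(4)}_1\wedge v^{(4)}_4$; this particular line is not used in Lemma \ref{lem:I(V,W) A3}, so nothing downstream is affected.
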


\section{Irreducible decomposition of $\mathfrak{spin}(7)$}

In this section, 
we give an irreducible decomposition of 
the Lie algebra $\mathfrak{spin}(7)$ of ${\rm Spin}(7)$ 
under an ${\rm SU}(2)$-action. 
First, we study the $\mathfrak{su}(4) \subset \mathfrak{spin}(7)$ case.

\begin{lemma} \label{lem:irr decomp su(4)}
Use the notation in Appendix \ref{sec:R irr rep}.
Let ${\rm SU}(2)$ act on $\mathfrak{su}(4)$ 
by the composition of 
$\rho_3: {\rm SU}(2) \hookrightarrow {\rm SU}(4)$ 
given by (\ref{eq: irr SU(2) on C4}) 
and the adjoint action of ${\rm SU}(4)$ on $\mathfrak{su}(4)$. 
Then we have 
$$\mathfrak{su}(4) \cong W_3 \oplus W_5 \oplus W_7.$$
More explicitly, 
$W_k$ corresponds to the $k$-dimensional ${\rm SU}(2)$-invariant subspace 
$W_k^{\mathfrak{su}(4)}$ of  $\mathfrak{su}(4)$, 
where $k=3,5,7$, given by 
\begin{align*}
W_3^{\mathfrak{su}(4)}
&=
(\rho_3)_* \mathfrak{su}(2) 
=
\left \{
\left( 
\begin{array}{cccc}
3i a                  & \sqrt{3} z  &0                      & 0\\
-\sqrt{3} \bar{z} & i a           & 2 z                  & 0 \\
0                      &-2  \bar{z} &-i a                  & \sqrt{3} z\\
0                      &0              &-\sqrt{3} \bar{z} & -3 ia
\end{array} 
\right);
z \in \mathbb{C}, 
a \in \mathbb{R}
\right \}, \\
W_5^{\mathfrak{su}(4)}
&=
\left \{
\left( 
\begin{array}{cccc}
i a                    &  z             &w        & 0\\
-\bar{z}            & -i a           & 0       & w \\
-\bar{w}           &0               &-i a     & -z\\
0                      &-\bar{w}    &\bar{z} & ia
\end{array} 
\right);
z,w \in \mathbb{C}, 
a \in \mathbb{R}
\right \}, \\
W_7^{\mathfrak{su}(4)}
&=
\left \{
\left( 
\begin{array}{cccc}
i a             &  z_1                    & z_2                & z_3\\
-\bar{z}_1   & -3i a                  & - \sqrt{3} z_1  & - z_2 \\
-\bar{z}_2   & \sqrt{3} \bar{z}_1 & 3i a                & z_1 \\
-\bar{z}_3   &\bar{z}_2             &-\bar{z}_1        & -ia
\end{array} 
\right);
z_1, z_2, z_3 \in \mathbb{C}, 
a \in \mathbb{R}
\right \}.
\end{align*}
\end{lemma}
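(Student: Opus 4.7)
The plan is to first establish the isomorphism type of the decomposition by a character computation and then identify each explicit subspace. For the character computation, I would work on the maximal torus of ${\rm SU}(2)$. Since $\rho_3$ is the symmetric cube of the standard two-dimensional representation, on $h_a = \mathrm{diag}(a, a^{-1})$ it acts as $\rho_3(h_a) = \mathrm{diag}(a^3, a, a^{-1}, a^{-3})$, so $\chi_{V_3}(h_a) = a^3 + a + a^{-1} + a^{-3}$. Because $\mathfrak{u}(4) \otimes \mathbb{C} = \mathrm{End}(V_3) = V_3 \otimes V_3^*$ as an ${\rm SU}(2)$-representation, the character of the adjoint action on $\mathfrak{u}(4)$ at $h_a$ equals $|\chi_{V_3}(h_a)|^2$, and subtracting $1$ for the trace part yields $\chi_{\mathfrak{su}(4)}(h_a) = (a^3+a+a^{-1}+a^{-3})^2 - 1 = a^6 + 2a^4 + 3a^2 + 3 + 3a^{-2} + 2a^{-4} + a^{-6}$. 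Comparing with (\ref{eq:character R irr rep}) gives $\chi_{\mathfrak{su}(4)} = \chi_{W_3} + \chi_{W_5} + \chi_{W_7}$, so $\mathfrak{su}(4) \cong W_3 \oplus W_5 \oplus W_7$ with each isotype appearing with multiplicity one.

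Next I would identify each subspace explicitly. Since $(\rho_3)_* \mathfrak{su}(2)$ is a $3$-dimensional ${\rm SU}(2)$-invariant subspace of $\mathfrak{su}(4)$, multiplicity one forces $W_3^{\mathfrak{su}(4)} = (\rho_3)_* \mathfrak{su}(2)$, and its explicit matrix form follows by differentiating (\ref{eq: irr SU(2) on C4}) in the directions $E_1, E_2, E_3$ at the identity, with the parameter $z$ coming from a suitable complex combination of $a_1, a_2$ in $a_1 E_1 + a_2 E_2 + a_3 E_3 \in \mathfrak{su}(2)$ and $a = a_3$. For $W_5^{\mathfrak{su}(4)}$ and $W_7^{\mathfrak{su}(4)}$, I would verify three things for each parametrized subspace: (i) it lies in $\mathfrak{su}(4)$, which amounts to checking skew-Hermiticity and vanishing trace on the entries; (ii) it is closed under $[(\rho_3)_* E_i, \cdot\,]$ for $i = 1, 2, 3$; and (iii) it has the correct real dimension ($5$ or $7$). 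Combined with the multiplicity-one decomposition and the obvious fact that both subspaces have trivial intersection with $(\rho_3)_* \mathfrak{su}(2)$, this identifies them uniquely as the $W_5$- and $W_7$-isotypes.

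The main obstacle is the infinitesimal invariance check (ii). Brackets with $(\rho_3)_* E_3 = \mathrm{diag}(3i, i, -i, -3i)$ merely rescale the off-diagonal entries by weight factors in $i \cdot \{0, \pm 2, \pm 4, \pm 6\}$ and so are immediate, but $(\rho_3)_* E_1$ and $(\rho_3)_* E_2$ are off-diagonal $4 \times 4$ matrices with irrational coefficients coming from (\ref{eq: irr SU(2) on C4}), and the resulting $4 \times 4$ commutator calculations showing that these brackets preserve the parametrized subspaces are tedious. A conceptual shortcut is to exploit the weight decomposition of $\mathfrak{sl}(4, \mathbb{C})$ under $\mathrm{ad}\bigl(-i(\rho_3)_* E_3\bigr) = \mathrm{ad}\,\mathrm{diag}(3, 1, -1, -3)$ to identify each $V_{2k}$ summand by its highest-weight vector (a specific strictly-upper-triangular elementary matrix), and then to note that $W_k^{\mathfrak{su}(4)}$ is cut out of that $V_{2k}$ as the real form under the Cartan involution $X \mapsto -X^*$; matching the resulting real subspace against the stated parametrization then completes the identification.
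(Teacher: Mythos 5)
Your proposal follows essentially the same route as the paper's proof: a character computation on the maximal torus giving $\mathfrak{su}(4)\cong W_3\oplus W_5\oplus W_7$ (your count $a^6+2a^4+3a^2+3+3a^{-2}+2a^{-4}+a^{-6}=\chi_{W_3}+\chi_{W_5}+\chi_{W_7}$ is correct), followed by checking that the three explicit subspaces are invariant under the adjoint action of $(\rho_3)_*\mathfrak{su}(2)$ and identifying them by dimension via the multiplicity-one decomposition. The only small caveat concerns your optional weight-space shortcut: only the $V_6$-summand of $\mathfrak{sl}(4,\mathbb{C})$ has an elementary matrix (namely $E_{14}$) as highest-weight vector, whereas the highest-weight vectors of the $V_4$- and $V_2$-summands are nontrivial linear combinations of elementary matrices of the same weight, but this does not affect your main argument.
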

\begin{proof}

First, we compute 
the character of this representation 
on the maximal torus 
by using 
(\ref{eq: irr SU(2) on C4}) and  (\ref{eq:character R irr rep}). 
Then 
we see that it is given by 
$\chi_{W_3} + \chi_{W_5} + \chi_{W_7}$. 
This is a straightforward computation, so we omit it. 
Hence we obtain the first statement.

We easily see that 
the three spaces above are invariant 
by the adjoint action of $(\rho_3)_* \mathfrak{su}(2)$. 
Hence 
the three spaces above are 
${\rm SU}(2)$-invariant 
and the proof is done.
\end{proof}

The explicit description of 
$\mathfrak{spin}(7)$ is given in \cite[Proposition 4.2]{Lotay3}. 
It is straightforward to deduce the following 
so we omit the proof.

\begin{lemma} \label{lem:decomp spin(7) su(4)}
We have
$$
\mathfrak{spin}(7) = 
\mathfrak{su}(4) \oplus 
W_1^{\mathfrak{spin}(7)} \oplus 
W_5^{\mathfrak{spin}(7)}, 
$$
where 
\begin{align*}
W_1^{\mathfrak{spin}(7)} &= \mathbb{R} H_0, 
\qquad 
H_0 = 
\left( 
\begin{array}{cccccccc}
0 & 0 &0 &0 &0 &0 &1 &0 \\
0 & 0 &0 &0 &0 &0 &0 &-1 \\
0 & 0 &0 &0 &-1 &0 &0 &0 \\
0 & 0 &0 &0 &0 &1 &0 &0 \\
0 & 0 &1 &0 &0 &0 &0 &0 \\
0 & 0 &0 &-1 &0 &0 &0 &0 \\
-1 & 0 &0 &0 &0 &0 &0 &0 \\
0   &1 &0 &0 &0 &0 &0 &0 \\
\end{array} 
\right), \\
W_5^{\mathfrak{spin}(7)} 
=&
\left \{
\left( 
\begin{array}{cccccccc}
0 & 0 &-a_1 &-a_2 &-a_3 &-a_4 &0 &-a_5 \\
0 & 0 &-a_2 &a_1 &-a_4 &a_3 &-a_5 &0 \\
a_1 & a_2 &0 &0 &0 &-a_5 &-a_3 &a_4 \\
a_2 & -a_1 &0 &0 &-a_5 &0 &a_4 &a_3 \\
a_3 & a_4 &0 &a_5 &0 &0 &a_1 &-a_2 \\
a_4 & -a_3 &a_5 &0 &0 &0 &-a_2 &-a_1 \\
0 & a_5 &a_3 &-a_4 &-a_1 &a_2 &0 &0 \\
a_5 & 0 &-a_4 &-a_3 &a_2 &a_1 &0 &0 \\
\end{array} 
\right); 
a_1, \cdots. a_5 \in \mathbb{R}
\right \}.
\end{align*}
By using the notation in Appendix \ref{sec:R irr rep}, 
$H_0$ is the structure map $j: V_3 \rightarrow V_3$ given in (\ref{eq:def of j}) 
with respect to the basis $\{ v^{(3)}_0, i v^{(3)}_0, \cdots, v^{(3)}_3, i v^{(3)}_3 \}$. 
\end{lemma}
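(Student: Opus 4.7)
The plan is to verify the decomposition using Lotay's explicit description of $\mathfrak{spin}(7) \subset \mathfrak{so}(8)$ from \cite[Proposition 4.2]{Lotay3}, together with the characterization $\mathfrak{spin}(7) = \{X \in \mathfrak{so}(8) : X \cdot \Phi_0 = 0\}$ (derivative action on $\Lambda^4 \mathbb{R}^8$). The inclusion $\mathfrak{su}(4) \subset \mathfrak{spin}(7)$ is immediate from the Kähler description $\Phi_0 = \tfrac{1}{2}\omega_0 \wedge \omega_0 + \operatorname{Re} \Omega_0$ in Definition \ref{def on R7}, since $\mathfrak{su}(4)$ preserves both $\omega_0$ and $\Omega_0$.

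For $W_1^{\mathfrak{spin}(7)} = \mathbb{R} H_0$: I would first check by direct matrix computation that $H_0$ is skew-symmetric and $H_0 \cdot \Phi_0 = 0$. Next, I would identify $H_0$ with the structure map $j$ from (\ref{eq:def of j}) in the chosen basis $\{v^{(3)}_0, i v^{(3)}_0, \dots, v^{(3)}_3, i v^{(3)}_3\}$: one computes $j v^{(3)}_0 = -v^{(3)}_3$, $j v^{(3)}_1 = v^{(3)}_2$, $j v^{(3)}_2 = -v^{(3)}_1$, $j v^{(3)}_3 = v^{(3)}_0$, and the $\mathbb{C}$-antilinearity $j(iv) = -i\,jv$ determines the remaining columns; arranging these yields exactly the displayed $H_0$. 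Since $j$ is ${\rm SU}(2)$-equivariant, $\rho_3(g)^{-1} H_0 \rho_3(g) = H_0$, so $\mathbb{R} H_0$ is a trivial ${\rm SU}(2)$-summand, i.e.\ a copy of $W_1$.

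For $W_5^{\mathfrak{spin}(7)}$: I would verify that each of the five generators (setting exactly one $a_i$ equal to $1$ in the displayed matrix) is skew-symmetric and annihilates $\Phi_0$, and then check ${\rm SU}(2)$-invariance via the brackets $[(\rho_3)_*(E_k), W_5^{\mathfrak{spin}(7)}] \subset W_5^{\mathfrak{spin}(7)}$ for $k = 1, 2, 3$. Its isomorphism class is pinned down by characters: since $n = 3$ is odd, $\mathbb{R}^8 \cong V_3^{\mathbb{R}} = W_8$ as a real ${\rm SU}(2)$-module, so $\mathfrak{so}(8) \cong \Lambda^2 W_8$ has character $a^6 + 4 a^4 + 5 a^2 + 8 + 5 a^{-2} + 4 a^{-4} + a^{-6}$ on the maximal torus, and subtracting $\chi_{W_3} + \chi_{W_5} + \chi_{W_7}$ from Lemma \ref{lem:irr decomp su(4)} leaves $3 W_1 \oplus 2 W_5$. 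The only $6$-dimensional ${\rm SU}(2)$-invariant subspace of $3 W_1 \oplus 2 W_5$ is $W_1 \oplus W_5$, which forces the decomposition regardless of the specific choice of complement.

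Linear independence of the three summands is immediate by inspection (e.g.\ no non-zero element of $\mathbb{R} H_0 \oplus W_5^{\mathfrak{spin}(7)}$ is $\mathbb{C}$-linear with respect to the complex structure of (\ref{eq:identification R8 C4}), while every element of $\mathfrak{su}(4)$ is), and the dimension count $15 + 1 + 5 = 21 = \dim \mathfrak{spin}(7)$ closes the argument. The one genuinely tedious step is the $8 \times 8$ verification $W_5^{\mathfrak{spin}(7)} \cdot \Phi_0 = 0$ for the five generators; this is the bookkeeping the paper suppresses as ``straightforward''.
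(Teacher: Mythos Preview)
Your proposal is correct and follows essentially the same approach as the paper, which simply says the result is ``straightforward to deduce'' from \cite[Proposition~4.2]{Lotay3} and omits the proof. You have fleshed out exactly what that straightforward verification entails: check that each displayed matrix lies in $\mathfrak{spin}(7)$, confirm that $H_0$ is the structure map $j$ in the given real basis, and close with a dimension count. Note that the ${\rm SU}(2)$-invariance and the character argument identifying $W_5^{\mathfrak{spin}(7)}\cong W_5$ actually belong to the \emph{next} lemma in the paper; for the present statement only the vector-space decomposition and the identification $H_0=j$ are asserted, so part of your proof is anticipating material the paper treats separately. One small wording issue: ``the only $6$-dimensional ${\rm SU}(2)$-invariant subspace of $3W_1\oplus 2W_5$'' should read ``the only isomorphism type of $6$-dimensional invariant subspace'', since there are several such subspaces but all are isomorphic to $W_1\oplus W_5$.
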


\begin{lemma}
Use the notation in Appendix \ref{sec:R irr rep}.
Let ${\rm SU}(2)$ act on $\mathfrak{spin}(7)$ 
by the composition of 
$\rho_3: {\rm SU}(2) \hookrightarrow {\rm SU}(4) \subset {\rm Spin}(7)$ 
given by (\ref{eq: irr SU(2) on C4}) 
and the adjoint action of ${\rm Spin}(7)$ on $\mathfrak{spin}(7)$. 
Then we have 
$$\mathfrak{spin}(7) \cong (W_1 \oplus W_5) \oplus (W_3 \oplus W_5 \oplus W_7).$$
The subspaces 
$W_1$ and the first $W_5$ correspond 
to $W_1^{\mathfrak{spin}(7)}$ and $W_5^{\mathfrak{spin}(7)}$ in Lemma \ref{lem:decomp spin(7) su(4)}, 
respectively. 
The subspace 
$W_3 \oplus W_5 \oplus W_7$ 
corresponds to $\mathfrak{su}(4)$, 
whose irreducible decomposition is given in Lemma \ref{lem:irr decomp su(4)}.
\end{lemma}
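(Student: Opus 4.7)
The plan is to combine the two previous lemmas with a character computation to handle the extra $5$--dimensional piece coming from $\mathfrak{spin}(7)/\mathfrak{su}(4)$. Concretely, I would start from the real vector space decomposition
\[
\mathfrak{spin}(7) = \mathfrak{su}(4) \oplus W_1^{\mathfrak{spin}(7)} \oplus W_5^{\mathfrak{spin}(7)}
\]
given by Lemma \ref{lem:decomp spin(7) su(4)} and verify that each of the three summands is preserved by the adjoint action of $\rho_3({\rm SU}(2))$. The first summand is ${\rm SU}(2)$--invariant because $\rho_3({\rm SU}(2)) \subset {\rm SU}(4)$, and its decomposition into $W_3 \oplus W_5 \oplus W_7$ is then immediate from Lemma \ref{lem:irr decomp su(4)}.

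For the trivial summand I would use the identification of $H_0$ with the structure map $j \colon V_3 \to V_3$ from Lemma \ref{lem:decomp spin(7) su(4)}. Since $j$ is $\mathbb{C}$--antilinear and ${\rm SU}(2)$--equivariant, one has $\rho_3(g) \circ j = j \circ \rho_3(g)$ as real linear maps, so ${\rm Ad}(\rho_3(g)) H_0 = \rho_3(g)\, H_0\, \rho_3(g)^{-1} = H_0$ for every $g \in {\rm SU}(2)$. Hence $W_1^{\mathfrak{spin}(7)} = \mathbb{R} H_0$ is pointwise fixed, giving the trivial representation $W_1$.

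For $W_5^{\mathfrak{spin}(7)}$, I would first observe that its ${\rm SU}(2)$--invariance follows from the invariance of the other two summands together with, e.g., orthogonality under the Killing form of $\mathfrak{spin}(7)$ (which is itself ${\rm SU}(2)$--invariant). Identifying it as $W_5$ is then a character calculation on the maximal torus $\{h_a\}$, exactly in the spirit of the proof of Lemma \ref{lem:irr decomp su(4)}: using the explicit matrix form of $W_5^{\mathfrak{spin}(7)}$ and of $\rho_3(h_a)$ from (\ref{eq: irr SU(2) on C4}), I would compute how ${\rm Ad}(\rho_3(h_a))$ acts on the five real parameters $a_1,\dots,a_5$, read off the character, and verify it agrees with $\chi_{W_5}(h_a)$ from (\ref{eq:character R irr rep}). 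Since no other $5$--dimensional real ${\rm SU}(2)$--representation has this character (any decomposable $5$--dimensional representation would contain a trivial summand, hence a torus--fixed vector that is absent here), this forces $W_5^{\mathfrak{spin}(7)} \cong W_5$.

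The main obstacle is the explicit verification of the action on $W_5^{\mathfrak{spin}(7)}$: the matrix conjugation ${\rm Ad}(\rho_3(h_a))$ applied to the $5$--parameter family listed in Lemma \ref{lem:decomp spin(7) su(4)} is a somewhat tedious linear algebra computation, but it reduces to one test on the maximal torus and is purely mechanical. Once the character matches $\chi_{W_5}$, the assembly into $(W_1 \oplus W_5) \oplus (W_3 \oplus W_5 \oplus W_7)$ is immediate.
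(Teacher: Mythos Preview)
Your proposal is correct and follows essentially the same approach as the paper: both use the decomposition of Lemma~\ref{lem:decomp spin(7) su(4)}, identify $H_0$ as the structure map $j$ to get the trivial summand, and finish with a character computation on the maximal torus to identify $W_5^{\mathfrak{spin}(7)}\cong W_5$. The only minor difference is that the paper verifies the ${\rm SU}(2)$--invariance of $W_5^{\mathfrak{spin}(7)}$ directly from the adjoint action of $(\rho_3)_*\mathfrak{su}(2)$, whereas you deduce it from Killing-form orthogonality with the other two (already invariant) summands; both arguments are standard and equally valid.
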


\begin{proof}
By Lemma \ref{lem:decomp spin(7) su(4)}, 
we only have to prove that 
$H_0$ is invariant under the ${\rm SU}(2)$-action 
and 
$W_5^{\mathfrak{spin}(7)}$ is an irreducible 5-dimensional representation of ${\rm SU}(2)$. 

Since $H_0$ is the structure map, 
it is invariant under the ${\rm SU}(2)$-action. 
We easily see that 
$W_5^{\mathfrak{spin}(7)}$ is invariant by the adjoint action of $(\rho_3)_* \mathfrak{su}(2)$. 
Hence it is ${\rm SU}(2)$-invariant. 
As in the proof of Lemma \ref{lem:irr decomp su(4)}, 
we can compute   
the character of the  ${\rm SU}(2)$-representation on $W_5^{\mathfrak{spin}(7)}$ 
and it is equal to $\chi_{W_5}$. 
Hence the proof is done.
\end{proof}

\address{
Gakushuin University, 1-5-1, Mejiro, Toshima,Tokyo, 171-8588, Japan}
{kkawai@math.gakushuin.ac.jp}


\begin{thebibliography}{99999}

\bibitem{AleSem}
B. Alexandrov and U. Semmelmann, Deformations of nearly parallel $G_{2}$-structures, 
Asian J. Math. 16 (2012), 713-744. 

\bibitem{Al Nuwairan}
M. Al Nuwairan, $SU(2)$-irreducibly covariant and EPOSIC channels, math.PH/1306.5321.

\bibitem{Blair}
D. E. Blair, Riemannian Geometry of Contact and Symplectic Manifolds,
Progress in Mathematics, 203, Birkh\"auser Boston, Inc., Boston, 2002.

\bibitem{FG}
M. Fern\'{a}ndez and A. Gray, 
Riemannian manifolds with structure group $G_{2}$, Annali di Mat. Pura Appl. 32 (1982), 19-45.

\bibitem{Foscolo}
L. Foscolo, Deformation theory of nearly K\"ahler manifolds, 
J. Lond. Math. Soc. (2) 95 (2017), 586-612. 

\bibitem{Fulton Harris}
W. Fulton and J. Harris, Representation Theory A first Course, Springer-Verlag,1991.

\bibitem{Gayet}
D. Gayet, Smooth moduli spaces of associative submanifolds, 
Q. J. Math. 65 (2014), 1213-1240. 

\bibitem{Harvey Lawson}
R. Harvey and H. B. Lawson, Calibrated geometries, Acta Math. 148 (1982), 47-157.

\bibitem{Huybrechts}
D. Huybrechts, Complex geometry, Springer, Berlin, 2005.

\bibitem{Kari}
S. Karigiannis, Deformations of $G_2$ and ${\rm Spin}(7)$-structures, 
Can. J. Math. 57 (2005), 1012-1055.

\bibitem{Kdeform}
K. Kawai, Deformations of homogeneous associative submanifolds in nearly parallel $G_{2}$-manifolds, 
Asian J. Math. 21 (2017), 429-462. 

\bibitem{Ksquashed}
K. Kawai, Some associative submanifolds of the squashed 7-sphere, 
Q. J. Math. 66 (2015), 861-893. 

\bibitem{Koiso}
N. Koiso, Rigidity and infinitesimal deformability of Einstein metrics, 
Osaka J. Math. 19 (1982), 643-668. 

\bibitem{LS}
H. V. L\^e  and L. Schwach\"ofer, 
Lagrangian submanifolds in strict  nearly K\"ahler  6-manifolds, math.DG/1408.6433.

\bibitem{Lotay_stab}
J. D. Lotay, Stability of Coassociative Conical Singularities, 
Comm. Anal. Geom. 20 (2012), 803-867.

\bibitem{Lotay3}
J. D. Lotay, Associative Submanifolds of the 7-Sphere, 
Proc. Lond. Math. Soc. (3) 105 (2012), 1183-1214. 

\bibitem{Mashimo minimal}
K. Mashimo, Minimal immersions of 3-dimensional spheres into spheres, 
Osaka J. Math. 21 (1984), 721-732.

\bibitem{MS}
D. Mcduff and D. Salamon, $J$-holomorphic curves and symplectic topology, 
American Mathematical Society, Providence, RI, 2004. 

\bibitem{Mclean}
R. C. McLean, Deformations of Calibrated Submanifolds, 
Comm. Anal. Geom. 6 (1998), 705-747. 

\bibitem{Mukai}
M. Mukai, The deformation of harmonic maps given by the Clifford tori, 
Kodai Math. J. 20 (1997), 252-268.

\bibitem{Oni}
A. L. Onishchik, Lectures on Real Semisimple Lie algebras and Their Representations, EMS, 2004.

\bibitem{Procesi}
C. Procesi, Lie Groups An Approach through Invariants and Representations, 
Springer Science+ Business Media, LLC, 2007.


\end{thebibliography}
\end{document}